\newcommand{\khat}[1]{{{#1}'}}
\renewcommand{\O}{\cO}
\newcommand{\QQ}{\bQ}
\newcommand{\RR}{\bR}
\newcommand{\cG}{\sG}
\newcommand{\Rscr}{\sR}
\newcommand{\CM}{Castelnuovo-Mumford}
\newcommand{\fm}{\mathfrak{m}}
\newcommand{\cK}{\mathcal{K}}
\newcommand{\fa}{\fra}
\newcommand{\Xhat}{{X'}}
\newcommand{\Deltahat}{{\Delta'}}
\renewcommand{\ord}{\operatorname{ord}}
\begin{document}
\numberwithin{equation}{theorem}
\title[Test ideals and anti-canonical algebras]{Test ideals in rings with finitely generated anti-canonical algebras}
\author[Chiecchio]{Alberto Chiecchio}
\address{TASIS in Dorado}
\email{alberto.chiecchio@gmail.com}
\author[Enescu]{Florian Enescu}
\address{Department of Mathematics and Statistics,
Georgia State University,
Atlanta GA 30303 USA }
\email{fenescu@gsu.edu}
\author[Miller]{Lance Edward Miller}
\address{Department of Mathematical Sciences, University of Arkansas, Fayetteville, AR 72701}
\email{lem016@uark.edu}
\author[Schwede]{Karl Schwede}
\address{Department of Mathematics, University of Utah, 155 S 1400 E Room 233, Salt Lake City, UT, 84112}
\email{schwede@math.utah.edu}

\thanks{The fourth named author was supported in part by the
  NSF FRG Grant DMS \#1265261/1501115, NSF CAREER Grant DMS \#1252860/1501102 and a Sloan
  Fellowship.}

\subjclass[2010]{14J17, 13A35, 14B05}

\keywords{Anticanonical, test ideal, multiplier ideal}

\begin{abstract}
Many results are known about test ideals and $F$-singularities for $\bQ$-Gorenstein rings.  In this paper we generalize many of these results to the case when the symbolic Rees algebra $\O_X \oplus \O_X(-K_X) \oplus \O_X(-2K_X) \oplus \ldots$ is finitely generated (or more generally, in the log setting for $-K_X - \Delta$).  In particular, we show that the $F$-jumping numbers of $\tau(X, \ba^t)$ are discrete and rational.  We show that test ideals $\tau(X)$ can be described by alterations as in Blickle-Schwede-Tucker (and hence show that splinters are strongly $F$-regular in this setting -- recovering a result of Singh).  We demonstrate that multiplier ideals reduce to test ideals under reduction modulo $p$ when the symbolic Rees algebra is finitely generated.  We prove that Hartshorne-Speiser-Lyubeznik-Gabber type stabilization still holds.  We also show that test ideals satisfy global generation properties in this setting.
\end{abstract}

\maketitle

\section{Introduction}

Test ideals were introduced by Hochster and Huneke in their theory of tight closure \cite{HochsterHunekeTC1} within positive characteristic commutative algebra.  After it was discovered that test ideals were closely related to multiplier ideals \cite{SmithMultiplierTestIdeals,HaraInterpretation}, a theory of test ideals of pairs was developed analogous to the theory of multiplier ideals \cite{HaraYoshidaGeneralizationOfTightClosure,TakagiInterpretationOfMultiplierIdeals}.  However, unlike multiplier ideals, test ideals were initially defined even without the hypothesis that $K_X$ was $\bQ$-Cartier (see \cite{DeFernexHaconSingsOnNormal} for a similar theory of multiplier ideals).  But the $K_X$ $\bQ$-Cartier hypothesis is useful for test ideals, and indeed a number of central open questions are still unknown without it.  The goal of this paper is generalize results from the hypothesis that $K_X$ is $\bQ$-Cartier to the setting where the local section ring $\sR(-K_X) := \O_X \oplus \O_X(-K_X) \oplus \O_X(-2K_X) \oplus \dots$ (also known as the symbolic Rees algebra) is finitely generated.

Most notably, perhaps the most important open problem within tight closure theory is the question whether weak and strong $F$-regularity are equivalent or more generally, whether splinters and strong $F$-regularity are equivalent (from the characteristic zero perspective, splinters, weak and strong $F$-regularity are competing notions of singularities analogous to KLT singularities that are all known to coincide in the $\bQ$-Gorenstein case).  These are known to be equivalent under the $K_X$ $\bQ$-Cartier hypothesis and under some other conditions \cite{SinghQGorensteinSplinters,LyubeznikSmithCommutationOfTestIdealWithLocalization,LyubeznikSmithStrongWeakFregularityEquivalentforGraded,AberbachMacCrimmonSomeResultsOnTestElements}.  Previously A.~K.~Singh announced a proof that splinters with $\sR(-K_R)$ finitely generated are strongly $F$-regular \cite{SinghSplintersFRegFG}.  We recover a new proof of this result and in fact show something stronger.  We prove that the (big) test ideal is equal to the image of a multiplier-ideal-like construction involving alterations.

\begin{theoremA*}[\autoref{thm.MainResultOnAlterations}, \autoref{cor.TauFiniteAlterations}]
Suppose that $X$ is a normal $F$-finite integral scheme and that $\Delta$ on $X$ is an effective $\bQ$-divisor such that $S= \sR(-K_X-\Delta)$ is finitely generated.  Then there exists an alteration $\pi : Y \to X$ from a normal $Y$, factoring through $X' = \sheafproj S$ so that
\[
\tau(X, \Delta) = \Image\big( \pi_* \O_Y(\lceil K_Y - \pi^*(K_X + \Delta) \rceil) \to \O_X\big).
\]
\begin{itemize}
\item{} If $X$ is of finite type over a perfect field, one may take $Y$ to be regular by \cite{deJongAlterations}.
\item{} Alternately, one may take $\pi$ to be a finite map (in which case $Y$ is almost certainly not regular).
\end{itemize}

As a consequence we obtain that
\[
\tau(X, \Delta) = \bigcap_{\pi : Y \to X} \Image\big( \pi_* \O_Y(\lceil K_Y - \pi^*(K_X + \Delta) \rceil) \to \O_X\big)
\]
where $\pi$ runs over all alterations with $Y \to X$ factoring through $X'$.  Alternately, one can run $\pi$ over all finite maps.  If additionally $X$ is of finite type over a perfect field, then one can run $\pi$ over all regular alterations factoring through $X'$.
\end{theoremA*}
Actually we prove a stronger theorem allowing for triples $(X, \Delta, \fra^t)$ but we don't include it here to keep the statement simpler.
Note that in characteristic zero, the same intersection over regular alterations characterized multiplier ideals by \cite{DeFernexHaconSingsOnNormal} at least after observing \autoref{rem.DiscAndRatAndAlterationsForMultiplierIdeals}.

Inspired by the analog with multiplier ideals \cite{EinLazSmithVarJumpingCoeffs}, there has been a lot of interest in showing that the jumping numbers of test ideals are rational and without limit points \cite{BlickleMustataSmithDiscretenessAndRationalityOfFThresholds,BlickleMustataSmithFThresholdsOfHypersurfaces,KatzmanLyubeznikZhangOnDiscretenessAndRationality,BlickleSchwedeTakagiZhang,BlickleTestIdealsViaAlgebras,SchwedeTuckerNonPrincipalIdeals,KatzmanSchwedeSinghZhang}. At this point, we know that the $F$-jumping numbers are discrete and rational for any $F$-finite scheme $X$ with $K_X$ $\bQ$-Cartier.  We also know discreteness if $\sR(-K_X)$ is finitely generated and $X = \Spec R$ is the spectrum of a graded ring \cite{KatzmanSchwedeSinghZhang}.  On the other hand we know that the jumping numbers of $\mJ(X, \Delta, \ba^t)$ are discrete and rational if $\sR(-K_X-\Delta)$ is finitely generated (see \autoref{rem.DiscAndRatAndAlterationsForMultiplierIdeals}).  We prove the following:
\begin{theoremB*}[\autoref{thm.DiscAndRatOfFjumping}, \autoref{prop.DiscretenessOfFJumpingNumbersViaGlobal}]
Suppose that $(R, \Delta)$ is a pair such that $\sR(-K_R - \Delta)$ is finitely generated.  Then for any ideal $\ba \subseteq R$ the $F$-jumping numbers of $\tau(R, \Delta, \ba^t)$ are rational and without limit points.
\end{theoremB*}
We prove the discreteness result in two ways.  First pass to the local section ring / symbolic Rees algebra, where the pullback of $-K_X-\Delta$ is $\bQ$-Cartier.  We then show that the test ideal of the symbolic Rees algebra restricts to the test ideal of the original scheme.  Alternately, in \autoref{sec.StablizationViaPositivity}, we prove the discreteness result for projective varieties by utilizing the theory developed by the first author and Urbinati \cite{ChiecchioUrbinatiAmpleWeilDivisors}.  In particular, we show a global generation result for test ideals, \autoref{thm.GGofNonPrincipalMult}, which immediately implies the test ideal result.

Another setting where the $\bQ$-Gorenstein hypothesis is used is in the study of non-$F$-pure ideals.  Recall that if $R$ is $F$-finite, normal and $\bQ$-Gorenstein with index not divisible by $p$, then it follows from \cite{HartshorneSpeiserLocalCohomologyInCharacteristicP,LyubeznikFModulesApplicationsToLocalCohomology,Gabber.tStruc} that the images of the evaluation-at-1 map
\[
\Hom_R(F^e_* R, R) \to R
\]
stabilize for sufficiently large $e$.  This stable image gives a canonical scheme structure to the non-$F$-pure locus of a variety.  We generalize this to the case that $\sR(-K_R)$ is finitely generated (which includes the case where the index of $-K_R$ is divisible by $p$).
\begin{theoremC*}[\autoref{cor.HartshorneSpeiserStabilization}, \autoref{thm.HartshorneSpeiserStabilization}, \autoref{thm.GlobalizedSigmaStabilization}]
Suppose that $R$ is an $F$-finite normal domain and that $B \geq 0$ is a $\bQ$-divisor with Weil-index not divisible by $p$.  If the anti-log-canonical algebra $\sR(-K_R-B)$ is finitely generated, then the image of the evaluation at 1 map $\Hom_R(F^e_* R( (p^e - 1) B), R) \to R$ stabilizes for $e$ sufficiently divisible.
\end{theoremC*}
Again we give several different proofs of this fact utilizing different strategies as above.  Finally, we also show:
\begin{theoremD*}
[\autoref{thm.Reduction}]
Suppose that $X$ is a normal quasi-projective variety over an algebraically closed field of characteristic zero.  Further suppose that $\Delta \geq 0$ is a $\bQ$-divisor such that $\sR(-K_X-\Delta)$ is finitely generated and also suppose that $\fra \subseteq \O_X$ is an ideal and $t \geq 0$ is a rational number.   Then
\[
\mJ(X, \Delta, \fra^t)_p = \tau(X_p, \Delta_p, \fra_p^t)
\]
for $p \gg 0$.
\end{theoremD*}
This should be compared with \cite{deFernexDocampoTakagiTuckerNumQGor} where the analogous result is shown under the hypothesis that $K_X + \Delta$ is numerically $\bQ$-Cartier.  This numerically $\bQ$-Cartier condition is somewhat orthogonal to the finite generation of $\sR(-K_X-\Delta)$.  In particular, if $\sR(-K_X-\Delta)$ is finitely generated and $-K_X-\Delta$ is numerically $\bQ$-Cartier then it is not difficult to see that $-K_X-\Delta$ is $\bQ$-Cartier, see also \cite{BoucksomdeFernexFavreUrbinati}.

\begin{remark}
A previous version of this paper included an incorrect statement in \autoref{lem.PullBackDivisorsOnSymbolicRees}.  This version, which also corrects the published version, fixes the statement by making it weaker.  Fortunately, we only needed the weaker statement in all our applications.
\end{remark}

\vskip 12pt
{\it Acknowledgements:}  The authors would like to thank Tommaso de Fernex, Christopher Hacon, Nobuo Hara, Mircea \mustata{} and Anurag K. Singh for several useful discussions.  We would also like to thank (Juan) Felipe P\'erez for several useful comments on a previous draft of this paper.  Finally, we would like to thank the referee for numerous valuable comments and for pointing out a mistake in a lemma (previously Lemma 2.15, now removed).

\section{Preliminaries}
\label{sec.Preliminaries}

In this section we recall the basic properties that we will need of test ideals, local section rings\footnote{Also called (divisorial) symbolic Rees algebras in the commutative algebra literature.}, as well as the theory of positivity for non-$\bQ$-Cartier divisors as developed by Urbinati-Chiecchio \cite{ChiecchioUrbinatiAmpleWeilDivisors}.  We conclude by stating a finite generation result for local section rings of threefolds in positive characteristic (as a consequence of recent breakthroughs in the MMP).

\begin{setting}
Throughout this paper, all rings will be assumed to be Noetherian, of equal characteristic $p > 0$, and $F$-finite (which implies that they are excellent and have dualizing complexes \cite{KunzOnNoetherianRingsOfCharP,Gabber.tStruc}).  All schemes will be assumed to be Noetherian, $F$-finite, separated and have dualizing complexes.  For us a variety is a separated integral scheme of finite type over an $F$-finite field.  For any scheme $X$, we use $F : X \to X$ to denote the absolute Frobenius morphism.  We also make the following universal assumption:
\begin{equation}
\tag{$\dagger$}
F^! \omega_X^{\mydot} \cong \omega_X^{\mydot}.
\end{equation}
This holds for all schemes of essentially finite type over an $F$-finite field (or even of essentially of finite type over an $F$-finite local ring).

Frequently we will also consider divisors on schemes $X$.  Whenever we talk about divisors $\Delta$ on $X$, we make the universal assumption that $X$ is normal and integral.  In particular, whenever we consider a pair $(R, \Delta)$ or $(X, \Delta)$, then $R$ or $X$ is implicitly assumed to be normal.
\end{setting}

We make one remark on some nonstandard notation that we use.  If $R$ is a normal domain and $D$ is a Weil divisor on $X = \Spec R$, then we use $R(D)$ to denote the fractional ideal $H^0(X, \cO_X(D)) \subseteq K(R)$.

\subsection{Test ideals and $F$-singularities}

We now recall the definitions and basic properties of test ideals.  While test ideals were introduced in \cite{HochsterHunekeTC1}, we are technically talking about the \emph{big/non-finitistic test ideal} from \cite{LyubeznikSmithCommutationOfTestIdealWithLocalization,HochsterFoundations}.  The particular definition of the test ideal presented here can be found in \cite[Definition 9.3.8]{BlickleSchwedeSurveyPMinusE} among other places.

\begin{definition}[Test ideals]
\label{defn.TestIdeals}
Suppose that $R$ is an $F$-finite normal domain, $\Delta \geq 0$ is a $\bQ$-divisor, $\ba \subseteq R$ is a non-zero ideal sheaf and $t \geq 0$ is a real number.  The test ideal
\[
\tau(X, \Delta, \ba^t)
\]
is the unique smallest nonzero ideal $J \subseteq R$ such that for every $e > 0$ and every $$\phi \in (F^e_* \ba^{\lceil p^e t \rceil}) \cdot \Hom_R(F^e_* R(\lceil (p^e-1) \Delta \rceil), R) \subseteq \Hom_R(F^e_*R, R)$$ we have that $\phi(F^e_* J) \subseteq J$.

If $\Delta = 0$ then we leave it out writing $\tau(X, \ba^t)$.  If $\ba = R$ or $t = 0$ then we write $\tau(R, \Delta)$.
\end{definition}

It is not obvious that the test ideal exists.  However, it can be shown that there exists $c \in R$ such that for each $0 \neq d \in R$, we have that $c \in \sum_{e > 0} \sum_{\phi} \phi(F^e_* (d R))$ where $\phi$ varies over $(F^e_* \ba^{\lceil t(p^e - 1) \rceil}) \cdot \Hom_R(F^e_* R(\lceil (p^e-1) \Delta \rceil), R)$, see \cite[Lemma 3.21]{SchwedeTestIdealsInNonQGor}.  This element $c$ is then called a \emph{(big) $(R, \Delta, \ba^t)$-test element}.  We then immediately obtain the following construction of the test ideal.

\begin{lemma}
\label{lem.TestIdealConstruction}
With notation as in \autoref{defn.TestIdeals}, if $c$ is a big $(R, \Delta, \ba^t)$-test element, then
\[
\tau(R, \Delta, \ba^t) = \sum_{e \geq 0} \sum_{\phi} \phi(F^e_* (c R))
\]
where again $\phi$ ranges over elements of $(F^e_* \ba^{\lceil t(p^e - 1) \rceil}) \cdot \Hom_R(F^e_* R(\lceil (p^e-1) \Delta \rceil), R)$.

One may also range $\phi$ over elements of $(F^e_* \ba^{\lceil tp^e \rceil}) \cdot \Hom_R(F^e_* R(\lceil (p^e-1) \Delta \rceil), R)$.  Alternately, one may replace $e \geq 0$ with $e \gg 0$.

Finally, we also have that for any sufficiently large Cartier divisor $D \geq 0$ that
\[
\tau(R, \Delta, \ba^t) = \sum_{e \geq e_0} \Tr^e F^e_* \big(\fra^{\lceil t p^e \rceil} \cdot \O_X( K_X - p^e(K_X + \Delta) - D) \big).
\]
\end{lemma}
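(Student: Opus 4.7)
The plan is to set
\[
J \;=\; \sum_{e \geq 0}\sum_{\phi} \phi(F^e_* cR),
\]
and match $J$ against the characterization of $\tau(R, \Delta, \fra^t)$ as the smallest nonzero ideal closed under every admissible $\phi$. The inclusion $J \subseteq \tau$ is immediate once one notes that $c \in \tau$: apply the test-element property to any nonzero $d \in \tau$; each summand $\phi(F^e_* dR)$ lies in $\tau$ because $\tau$ is $\phi$-stable, hence $c$ itself lies in $\tau$. Then every summand $\phi(F^e_* cR)$ also lies in $\tau$.

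For the reverse inclusion I would verify that $J$ is nonzero and stable under every admissible $\phi$, then invoke the minimality of $\tau$. Nonzero is immediate: at $e = 0$ the identity is admissible and contributes $c$. Stability is the main technical step, a composition calculation: given an admissible $\psi$ at level $e'$, the composite $\psi \circ F^{e'}_* \phi$ at level $e + e'$ is admissible because
\[
\lceil t(p^{e'}-1)\rceil + p^{e'}\lceil t(p^{e}-1)\rceil \;\geq\; \lceil t(p^{e+e'}-1)\rceil,
\]
combining $\lceil x \rceil + \lceil y\rceil \geq \lceil x + y \rceil$ with $n\lceil y\rceil \geq \lceil ny\rceil$ for positive integers $n$, and the parallel inequality for the $\Delta$-boundary exponents holds verbatim. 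Minimality then forces $\tau \subseteq J$.

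The variant using $\fra^{\lceil tp^e\rceil}$ follows from the same argument: if $J' := \sum_{e \geq 0}\sum_{\phi'}\phi'(F^e_* cR)$ with $\phi' \in (F^e_* \fra^{\lceil tp^e\rceil}) \cdot \Hom_R(F^e_* R(\lceil(p^e-1)\Delta\rceil), R)$, then the composite $\psi \circ F^{e'}_* \phi'$ still has premultiplier $\fra$-exponent at least $\lceil tp^{e+e'}\rceil$, so $J'$ is in fact stable under the \emph{larger} admissible family, and combined with $J' \subseteq \tau$ minimality yields $J' = \tau$. For the ``$e \gg 0$'' reformulation, given any $e_0$, one iterates the defining test-element identity $c \in \sum_{e > 0}\sum_\phi \phi(F^e_* cR)$ sufficiently many times to express $c$ as a sum of terms $\phi_i(F^{e_i}_* cR)$ with every $e_i \geq e_0$, which rewrites all low-$e$ contributions as high-$e$ ones via the same composition rule.

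Finally, for the sheaf-theoretic formulation I would pick a test element $c$ with $\divisor(c) \leq D$ (permitted since test elements are closed under multiplication by nonzero elements of $R$, allowing us to inflate $\divisor(c)$ to any prescribed sufficiently large Cartier $D$), then invoke Grothendieck duality for the finite morphism $F^e$ to identify
\[
\Hom_R(F^e_* R(\lceil(p^e-1)\Delta\rceil), R) \;\cong\; F^e_* \O_X(K_X - p^e(K_X + \Delta)),
\]
where $\phi$ corresponds to the section whose multiplication followed by $\Tr^e$ recovers $\phi$. Under this dictionary, together with $\O_X(-D) \subseteq cR$, the expression $\Tr^e F^e_*\big(\fra^{\lceil tp^e\rceil}\O_X(K_X - p^e(K_X+\Delta) - D)\big)$ is identified with a sub-sum of $\sum_{\phi'}\phi'(F^e_* cR)$, with equality when $\divisor(c) = D$; summing over $e \geq e_0$ and invoking the ``$e \gg 0$'' version concludes. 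I expect the composition bookkeeping in the second paragraph to be the main obstacle; the subsequent reductions and the Grothendieck-duality translation are standard once that is in place.
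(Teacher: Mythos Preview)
Your proof is correct and follows the same overall strategy as the paper: show the sum is nonzero and stable under all admissible $\phi$ via a composition argument, then invoke minimality of $\tau$. The paper handles the two variants slightly differently from you---for the $\fra^{\lceil tp^e\rceil}$ version it replaces the test element $c$ by $cd$ with $d \in \fra$ chosen so that $d\fra^{\lceil t(p^e-1)\rceil} \subseteq \fra^{\lceil tp^e\rceil}$ for all $e$, and for the $e \gg 0$ version it simply observes that the truncated sum is itself $\phi$-stable (same composition computation) rather than iterating the test-element identity---but these are minor tactical differences; both your arguments and the paper's work and are of comparable length.
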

\begin{proof}  For the first statement, it is easy to see that $c$ is contained in any ideal satisfying the condition $\phi(F^e_* J) \subseteq J$ for all $\phi \in (F^e_* \ba^{\lceil t(p^e - 1) \rceil}) \cdot \Hom_R(F^e_* R(\lceil (p^e-1) \Delta \rceil), R)$.  Hence so is the sum.  Thus the sum is the smallest such ideal.

For the second statement replacing $\ba^{\lceil t(p^e - 1) \rceil}$ with $\ba^{\lceil tp^e \rceil}$ obviously we have the $\tau \supseteq \sum$ containment.  Notice that if $c$ is a test element, then so is $dc$ for any $0 \neq d \in R$.  Hence one can form the original sum with $cd$ for some $d$ so that $d \ba^{\lceil t(p^e-1) \rceil} \subseteq \ba^{\lceil t p^e \rceil}$ for all $e$.  The inclusion $\tau \subseteq \sum$ follows.

For the $e \gg 0$ statement, notice that if $J$ is the sum for $e \gg 0$, then we still have $\phi(F^e_* J) \subseteq J$.  The final characterization of the test ideal follows immediately from the fact that
\[
\fra^{\lceil t p^e \rceil} \cdot \O_X( K_X - p^e(K_X + \Delta) - \Div_X(c))  = \fra^{\lceil t p^e \rceil} \cdot \O_X(-D) \cdot \sHom_{\O_X}(F^e_* \O_X( \lfloor p^e \Delta \rfloor), \O_X ).
\]
We notice that any difference coming from the fact that we round down instead of round up can be absorbed into the difference between $D$ and $\Div_X(c)$.
\end{proof}

We also recall some properties of the test ideal for later use.

\begin{lemma}
\label{lem.BasicTestIdealProperties}
Suppose that $(R, \Delta, \ba^t)$ is as in \autoref{defn.TestIdeals}.  Then:
\begin{enumerate}
\item The formation of $\tau(R, \Delta, \ba^t)$ commutes with localization and so one can define $\tau(X, \Delta, \ba^t)$ for schemes as well.
\item If $s \geq t$, then $\tau(R, \Delta, \ba^s) \subseteq \tau(R, \Delta, \ba^t)$.
\item For any $t \geq 0$, there exists an $\varepsilon > 0$ so that if $s \in [t, t+\varepsilon)$, then $\tau(X, \Delta, \ba^t) = \tau(X, \Delta, \ba^s)$.
\item If $0 \neq f \in R$ and $H = V(f)$ is the corresponding Cartier divisor, then
\[
f\tau(X, \Delta, \ba^t) = \tau(X, \Delta, \ba^t) \otimes \O_X(-H) = \tau(X, \Delta+H, \ba^t).
\]
\end{enumerate}
\end{lemma}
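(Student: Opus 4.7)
The plan is to lean on the test-element characterization in \autoref{lem.TestIdealConstruction} throughout, since all four statements essentially reduce to manipulations of the expression $\sum_{e,\phi}\phi(F^e_* cR)$ and the closure condition defining $\tau$.

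For part (1), I would fix a test element $c \in R$ and verify that $c/1$ is a big test element for each localization $R_W$. In the $F$-finite setting, $F^e_*$ commutes with localization and so does $\sHom$ of coherent sheaves, so both the module $(F^e_*\ba^{\lceil tp^e\rceil})\cdot\Hom_R(F^e_*R(\lceil(p^e-1)\Delta\rceil),R)$ and its action on $cR$ localize. The sum description of \autoref{lem.TestIdealConstruction} then gives $\tau(R,\Delta,\ba^t)_W = \tau(R_W,\Delta|_W,\ba^t_W)$, which is exactly what is needed to globalize the definition to schemes.

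For part (2), the inclusion $\ba^{\lceil p^e s\rceil}\subseteq \ba^{\lceil p^e t\rceil}$ for $s\geq t$ implies that every $\phi$ appearing in the defining sum for $\tau(R,\Delta,\ba^s)$ also appears in the defining sum for $\tau(R,\Delta,\ba^t)$. Hence the closure condition for $t$ is stronger than for $s$, so any ideal that is $t$-closed is also $s$-closed, and the unique smallest $s$-closed nonzero ideal is contained in the $t$-analog.

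For part (3), the main point is Noetherianity: the sum $\tau(R,\Delta,\ba^t)=\sum_{e\geq 0}\sum_\phi \phi(F^e_*cR)$ is achieved by a finite subsum, involving only finitely many maps $\phi_i\in (F^{e_i}_*\ba^{\lceil tp^{e_i}\rceil})\cdot\Hom_R(F^{e_i}_*R(\lceil(p^{e_i}-1)\Delta\rceil),R)$, say with $e_i\leq E$. Choosing $\varepsilon>0$ small enough that $\lceil sp^{e_i}\rceil=\lceil tp^{e_i}\rceil$ for all $i$ and all $s\in[t,t+\varepsilon)$ (handling the possibly integral values $tp^{e_i}$ by shrinking $\varepsilon$ further and using the alternate $\lceil t(p^e-1)\rceil$ form in the construction lemma), each $\phi_i$ remains available in the sum defining $\tau(R,\Delta,\ba^s)$. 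Thus $\tau(R,\Delta,\ba^t)\subseteq \tau(R,\Delta,\ba^s)$, and combined with (2) we get equality throughout $[t,t+\varepsilon)$.

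For part (4), the identity $f\tau(X,\Delta,\ba^t) = \tau(X,\Delta,\ba^t)\otimes\O_X(-H)$ is simply the intrinsic torsion-freeness/rank-one nature of $\tau$ and the definition of $\O_X(-H) = fR$ as a sub-$\O_X$-module of $K(R)$. The more substantive claim is $\tau(X,\Delta+H,\ba^t) = f\tau(X,\Delta,\ba^t)$. Here I would use the canonical isomorphism
\[
\sHom_{\O_X}\bigl(F^e_*\O_X(\lceil(p^e-1)(\Delta+H)\rceil),\O_X\bigr) \;\cong\; \sHom_{\O_X}\bigl(F^e_*\O_X(\lceil(p^e-1)\Delta\rceil),\O_X\bigr)\cdot f^{p^e-1},
\]
where the multiplication by $f^{p^e-1}$ on the source corresponds, after applying $\phi$, to multiplication by $f$ on the target (since $\phi(F^e_*(f^{p^e}\cdot -))=f\cdot\phi(F^e_*(-))$). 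This bijection of $\phi$'s, combined with the observation $f\cdot \phi(F^e_* cR)=\phi(F^e_*(f^{p^e}cR))$ and the fact that $f^{p^e}c$ is still a test element, transports the sum description of $\tau(X,\Delta,\ba^t)$ to that of $\tau(X,\Delta+H,\ba^t)$ up to multiplication by $f$, yielding the equality. The main obstacle across all four parts is the bookkeeping in (3) around ceiling functions when $tp^e\in\bZ$, which is why the flexibility of the two equivalent sum descriptions in \autoref{lem.TestIdealConstruction} is essential.
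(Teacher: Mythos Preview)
Your arguments for parts (1), (2), and (4) match the paper's approach (which simply cites \autoref{lem.TestIdealConstruction} and the projection formula), and your part (3) begins the same way by invoking Noetherianity to truncate the sum at some $e\leq m$. The difference, and the gap, is in how you handle the ceiling bookkeeping.

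Your proposed fix for the integral-ceiling case---switching between the $\lceil tp^e\rceil$ and $\lceil t(p^e-1)\rceil$ forms---does not cover all cases: when $t$ is itself an integer, \emph{both} $tp^e$ and $t(p^e-1)$ are integral for every $e$, so neither form lets you achieve $\lceil sp^e\rceil=\lceil tp^e\rceil$ for $s>t$. The paper avoids this case analysis entirely with a cleaner trick: it replaces the test element $c$ by $cd$ for some $0\neq d\in\ba$ (still a test element), so that
\[
\tau(R,\Delta,\ba^t)=\sum_{e=0}^m\sum_\psi\psi\bigl(F^e_*(cd\,\ba^{\lceil tp^e\rceil})\bigr)\subseteq\sum_{e=0}^m\sum_\psi\psi\bigl(F^e_*(c\,\ba^{\lceil tp^e\rceil+1})\bigr),
\]
and then simply uses the uniform inequality $\lceil tp^e\rceil+1\geq\lceil(t+\varepsilon)p^e\rceil$ for $\varepsilon=1/p^m$ and $e\leq m$. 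Absorbing one copy of $\ba$ into the test element is exactly what buys the ``$+1$'' needed to dominate the $s$-exponent without worrying about whether any ceiling is hit exactly.
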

\begin{proof}
Part (a) follows immediately from \autoref{lem.TestIdealConstruction}.  Part (d) follows similarly (use the projection formula).  Part (b) is obvious also from \autoref{lem.TestIdealConstruction}.

For part (c), this is \cite[Exercise 9.12]{BlickleSchwedeSurveyPMinusE}.  Let us quickly sketch the proof since we do not know of a reference where this is addressed in full generality.  Choose $c \neq 0$ a test element.  It is easy to choose $c$ that works for all all $s \in [t, t+1]$.  We then write $\tau(R, \Delta, \ba^t) = \sum_{e \geq 0} \sum_{\phi} \phi(F^e_* (cd R))$ for $d$ some element in $\ba$.  This sum is a finite sum, say for $e = 0$ to $e = m$.  Let $\varepsilon = {1 \over p^m}$.  Then
\[
\tau(R, \Delta, \ba^t) = \sum_{e=0}^m \sum_{\phi} \phi(F^e_* (cd R)) = \sum_{e = 0}^m \sum_{\psi} \psi(F^e_* (cd\ba^{\lceil tp^e \rceil})) \subseteq \sum_{e = 0}^m \sum_{\psi} \psi(F^e_* (c\ba^{\lceil tp^e \rceil + 1}))
\]
where $\psi$ now runs over $\Hom_R(F^e_* R(\lceil (p^e - 1)\Delta \rceil), R)$.

 We then see that $\lceil t(p^e-1) \rceil + 1 \geq \lceil (t + \varepsilon)p^e \rceil$ for $e \leq m$ and so
\[
\tau(R, \Delta, \ba^t) \subseteq \sum_{e = 0}^m \sum_{\psi} \psi(F^e_* (c\ba^{\lceil (t + \varepsilon)p^e \rceil})) \subseteq \tau(R, \Delta, \ba^{t + \varepsilon}).
\]
The other containment was handled in (b).
\end{proof}

Finally, we make one more definition related to test ideals.

\begin{definition}
A triple $(X, \Delta, \ba^t)$ as in \autoref{defn.TestIdeals} is called \emph{strongly $F$-regular} if $\tau(R, \Delta, \ba^t) = R$.
\end{definition}

We briefly also recall some formalities of $p^{-e}$-linear maps and connections with divisors.
\begin{lemma}
\label{lem.MapsAndDivisorsBasics}
Suppose that $X = \Spec R$ is an $F$-finite normal scheme.
\begin{enumerate}
\renewcommand{\theenumi}{\alph{enumi}}
\item There is a bijection between effective divisors $\Delta$ such that $(p^e -1)(K_R + \Delta) \sim 0$ and elements $\phi$ of $\Hom_R(F^e_* R, R)$ modulo pre-multiplication by units.  We use the following notation for this correspondence:
\[
\begin{array}{rcl}
\phi & \mapsto & \Delta_{\phi},\\
\Delta & \mapsto & \phi_{\Delta}.
\end{array}
\]
\item If $\phi \in \Hom_R(F^e_* R, R)$ corresponds to a $\bQ$-divisor $\Delta_{\phi}$, then the map $$\psi(\blank) = \phi(F^e_* (d \cdot \blank))$$ corresponds to the divisor $\Delta_{\psi} = \Delta_{\phi} + {1 \over p^{e}-1} \divisor(d)$.
\item If $\phi \in \Hom_R(F^e_* R, R)$ and $\phi^n := \phi \circ (F^e_* \phi) \circ \ldots \circ (F^{(n-1)e}_*) : F^{ne}_* R \to R$, then $\Delta_{\phi^n} = \Delta_{\phi}$.
\item Using the bijection of (a), if $\Delta \geq 0$ is any effective $\bR$-divisor, then the elements $\phi$ of $\Hom_R(F^e_* R(\lceil (p^e-1) \Delta \rceil) \subseteq \Hom_R(F^e_* R, R)$ (modulo multiplication by units) are in bijection with divisors $\Delta_{\phi}$ with $\Delta_{\phi} \geq \Delta$ and of course with $(p^e-1)(K_R + \Delta) \sim 0$.
\end{enumerate}
\end{lemma}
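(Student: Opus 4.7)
The plan is to derive all four parts from Grothendieck duality applied to the Frobenius morphism. Under the standing assumption $(\dagger)$ that $F^! \omega_X^{\mydot} \cong \omega_X^{\mydot}$, one has the canonical $F^e_* R$-module isomorphism
\[
\Hom_R(F^e_* R, R) \;\cong\; F^e_* R\bigl((1-p^e)K_R\bigr),
\]
which I would quote as the fundamental input; it is essentially a restatement of $(\dagger)$ combined with $\omega_R \cong R(K_R)$. Part (a) is then immediate: a nonzero $\phi$ corresponds to a nonzero rational section $s_\phi$ of $\O_X((1-p^e)K_X)$, hence to an effective Weil divisor $D_\phi$ with $D_\phi + (p^e-1)K_R \sim 0$; setting $\Delta_\phi := \tfrac{1}{p^e-1} D_\phi$ yields the desired bijection, with two maps producing the same divisor iff they differ by a unit.

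Parts (b) and (d) are then direct transports through this isomorphism. For (b), pre-multiplication inside $\Hom_R(F^e_* R, R)$ by $d \in R$ corresponds to multiplying the section $s_\phi$ by $d$; hence $D_\phi$ picks up $\divisor(d)$ and $\Delta_\phi$ picks up $\tfrac{1}{p^e-1}\divisor(d)$. For (d), the inclusion $\Hom_R(F^e_* R(\lceil (p^e-1)\Delta\rceil), R) \subseteq \Hom_R(F^e_* R, R)$ corresponds under duality to
\[
F^e_* R\bigl((1-p^e)K_R - \lceil(p^e-1)\Delta\rceil\bigr) \;\subseteq\; F^e_* R\bigl((1-p^e)K_R\bigr),
\]
and sections of the smaller sheaf are exactly those whose divisor $D_\phi$ dominates $\lceil (p^e-1)\Delta\rceil$, equivalently $\Delta_\phi \geq \Delta$ as $\bQ$-divisors.

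The substantive content is in (c). By an induction on $n$, it is enough to treat $n = 2$, where $\phi^2 = \phi \circ (F^e_* \phi)$. Under the iterated duality isomorphism $\Hom_R(F^{2e}_* R, R) \cong F^{2e}_* R((1-p^{2e})K_R)$, I would show that the section corresponding to $\phi^2$ is $s_\phi \cdot F^e_*(s_\phi)$, whose Weil divisor equals $D_\phi + p^e D_\phi = (1+p^e) D_\phi$. Since $p^{2e}-1 = (p^e-1)(1+p^e)$, dividing gives $\Delta_{\phi^2} = \tfrac{1}{p^e-1} D_\phi = \Delta_\phi$, and the general case follows from the factorization $p^{ne}-1 = (p^e-1)(1 + p^e + \cdots + p^{(n-1)e})$. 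The main obstacle is verifying this composition-to-product identity cleanly; since the divisors in question are determined in codimension one by normality, I would localize at a height-one prime where $R$ is a DVR and $F^e_* R$ is free over $R$, pick a local generator $\phi_0$ of $\Hom_R(F^e_* R, R)$ with $\Delta_{\phi_0} = 0$, write $\phi = \phi_0\bigl(F^e_*(u \cdot \blank)\bigr)$ with $\divisor(u) = D_\phi$ by (b), and then compute $\phi^2$ by direct expansion, applying (b) again to read off the resulting divisor.
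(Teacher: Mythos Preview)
Your argument is correct and is in fact the standard one; the paper's own proof consists entirely of citations to \cite{SchwedeFAdjunction}, \cite{BlickleSchwedeSurveyPMinusE}, and \cite{SchwedeTuckerTestIdealFiniteMaps}, so you have supplied the details that the paper defers elsewhere. One small point of presentation: the expression ``$s_\phi \cdot F^e_*(s_\phi)$'' in your treatment of (c) is imprecise as written (it is not clear in what module this product lives), but your DVR localization bypasses this and the computation $\phi^2 = \phi_0^2\bigl(F^{2e}_*(u^{p^e+1}\cdot\blank)\bigr)$ is exactly right, together with the fact that $\phi_0^2$ generates $\Hom_R(F^{2e}_*R,R)$ locally so that $\Delta_{\phi_0^2}=0$.
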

\begin{proof}
(a) is just \cite[Theorem 3.13]{SchwedeFAdjunction}. (b) is a straightforward exercise, see \cite[Exercise 4.11]{BlickleSchwedeSurveyPMinusE}. (c) is \cite[Theorem 3.11(e)]{SchwedeFAdjunction}.  (d) is also not difficult to check, see for instance \cite[Definition 6.8]{SchwedeTuckerTestIdealFiniteMaps}.
\end{proof}

\subsection{Local section rings of divisors / symbolic Rees algebras}
\label{subsec.LocalSectionRings}

Suppose that $X$ is an $F$-finite normal integral Noetherian scheme and $\Gamma$ is a $\bQ$-divisor on $X$.  Then one can form
\[
S = \sR(X, \Gamma) = \bigoplus_{i \in \bZ_{\geq 0}} \O_X(\lfloor i \Gamma \rfloor).
\]
Additionally for any integer $n > 0$ we use $S^{(n)} := \sR(X, n\Gamma)$ to denote the $n$th Veronese subalgebra.  Note that there is a canonical map $\O_X \to S$ and dually a map $\sheafspec S \xrightarrow{\kappa} X$ of schemes (note $S$ may not be Noetherian).  If $S$ (or equivalently $Y = \sheafspec S$) is Noetherian, then we also have $\sheafproj S \xrightarrow{\mu} X$.  These maps are very well behaved outside of codimension $\geq 2$. We recall that the map $\sheafproj S \xrightarrow{\mu} X$ is called the \emph{$\QQ$-Cartierization} of $\Gamma$.  It is a small projective morphism $Y\xrightarrow{f}X$ such that the strict transform $f^{-1}_*\Gamma$ is $\QQ$-Cartier and $f$-ample. Moreover, such a map exists if and only if $S$ is Noetherian, \cite[Lem. 6.2]{KollarMori}. When $S$ is finitely generated, both $\sheafspec S$ and $\sheafproj S$ are normal, see for instance \cite{DemazureNormalGradedRings} and also see \cite{WatanabeRemarksOnDemazure}.

\begin{lemma}
Suppose that $S$ is finitely generated and $W \subseteq X$ is a closed subset of codimension $\geq 2$.  Then $\kappa^{-1} W$ and $\mu^{-1} W$ are also codimension $\geq 2$ in $\sheafspec S$ and $\sheafproj S$ respectively.  Additionally $\mu$ is an isomorphism outside a closed codimension $\geq 2$ subset of $X$ and if $\Gamma$ is integral, then $\kappa$ is an $\bA^1$-bundle outside a set of codimension 2.

As a consequence, if $D$ is any $\bQ$-divisor on $X$, then we have canonical pullbacks $\mu^* D$ and $\kappa^* D$.
\end{lemma}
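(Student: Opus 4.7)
The plan is to work on the open locus $U \subseteq X$ where $\Gamma$ becomes Cartier, and to bootstrap from the nice structure there to the full statements on $\sheafspec S$ and $\sheafproj S$. Since $X$ is normal, the complement $Z := X \setminus U$ automatically has codimension $\geq 2$, and this single fact will drive everything.

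Pick $n > 0$ with $n\Gamma$ integral, and let $U \subseteq X$ be the open locus where $n\Gamma$ is Cartier. On $U$ the sheaf $\sL := \O_X(n\Gamma)|_U$ is a line bundle, and reflexivity gives $\O_X(kn\Gamma)|_U = \sL^{\otimes k}$ for all $k \geq 0$, so the Veronese $S^{(n)}$ satisfies $S^{(n)}|_U = \Sym_{\O_U} \sL$. Consequently $\mu^{-1}(U) \to U$ is the trivial $\bP^0$-bundle, i.e.\ an isomorphism (using $\sheafproj S = \sheafproj S^{(n)}$); when $\Gamma$ itself is integral I take $n = 1$ and obtain that $\kappa^{-1}(U) \to U$ is a Zariski-locally trivial $\bA^1$-bundle. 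This settles the two structural claims in the middle of the lemma.

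For $\mu^{-1}(W)$ with $W \subseteq X$ closed of codim $\geq 2$ I would invoke smallness: the previous step already shows $\mu$ is a proper birational morphism which is an isomorphism over the codim $\geq 2$ complement $Z$, hence contracts no divisor, which is precisely smallness. Any hypothetical divisorial component $E$ of $\mu^{-1}(W)$ would satisfy $\mu(E) \subseteq W$ of codim $\geq 2$, making $E$ exceptional, contradicting smallness. Hence $\codim_{\sheafproj S} \mu^{-1}(W) \geq 2$.

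The subtle estimate is for $\kappa^{-1}(W)$, which I plan to split along the canonical section $\sigma : X \hookrightarrow \sheafspec S$ obtained from the augmentation $S \twoheadrightarrow S_0 = \O_X$. Since $\sigma(X)$ is an irreducible codim-$1$ Weil divisor in $\sheafspec S$, the image $\sigma(W)$ has codim $\geq 3$ in $\sheafspec S$ and poses no problem. Off $\sigma(X)$, the $\bG_m$-action coming from the grading of $S$ presents $\sheafspec S \setminus \sigma(X) \to \sheafproj S$ as an orbit map with one-dimensional fibers, so the preimage of $\mu^{-1}(W)$ (already codim $\geq 2$ in $\sheafproj S$) stays codim $\geq 2$ in $\sheafspec S \setminus \sigma(X)$. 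Combining the two pieces yields $\codim_{\sheafspec S} \kappa^{-1}(W) \geq 2$. Finally, canonical pullbacks $\mu^* D$ and $\kappa^* D$ of a $\bQ$-divisor $D$ on $X$ are constructed by classical pullback over $U$ (where $\mu$ is an isomorphism and $\kappa$ a bundle) followed by unique reflexive extension, using that the complements are codim $\geq 2$ by the estimates above. The main obstacle I expect is justifying the $\bG_m$-orbit picture when $S$ is not generated in degree $1$; an alternative that sidesteps this is to bound fibers of $\kappa$ over $Z$ directly via Chevalley's upper semicontinuity of fiber dimension, but either route reduces to controlling $\kappa$ over the non-Cartier locus $Z$.
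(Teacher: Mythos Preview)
Your argument for $\mu^{-1}(W)$ and the structural claims over the Cartier locus $U$ matches the paper's: both establish smallness of $\mu$ (the paper by citing \cite[Lem.~6.2]{KollarMori}, you by exhibiting the isomorphism over $U$) and derive the codimension bound from it.

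For $\kappa^{-1}(W)$ you take a genuinely different route. The paper argues purely algebraically: assuming $\Gamma$ integral, if $\kappa^{-1}(W)$ had a codimension-$1$ component with defining height-$1$ prime $Q \subset S$, then \cite[Lemma 4.3]{GotoHerrmannNishidaVillamayor} forces $Q \cap R$ to have height $\leq 1$, contradicting $V(Q \cap R) \subseteq W$; the general case follows because $S$ is finite over $S^{(n)}$. Your approach is geometric: split along the zero section $\sigma(X)$ and control the complement via the orbit map $\pi : \sheafspec S \setminus \sigma(X) \to \sheafproj S$, using the already-established bound on $\mu^{-1}(W)$. This is a nice reduction that avoids the external height lemma, and it makes transparent why $\kappa^{-1}(W)$ inherits the codimension bound from $\mu^{-1}(W)$.

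Your acknowledged gap---that $\pi$ has one-dimensional fibers when $S$ is not generated in degree $1$---is real but easily closed, and in fact by the same Veronese device the paper uses: choose $n$ so that $S^{(n)}$ is generated in degree $1$; then $\sheafspec S^{(n)} \setminus \sigma(X) \to \sheafproj S^{(n)} = \sheafproj S$ is a genuine $\bG_m$-bundle, and $\sheafspec S \to \sheafspec S^{(n)}$ is finite with the set-theoretic preimage of $\sigma(X)$ equal to $\sigma(X)$ (since $\sqrt{S^{(n)}_+ \cdot S} = S_+$), so $\pi$ factors as finite followed by $\bG_m$-bundle. Your proposed alternative via Chevalley upper semicontinuity, however, does not work as stated: semicontinuity only bounds fiber dimension from \emph{below} on specialization, so it cannot by itself rule out large fibers of $\kappa$ over the non-Cartier locus $Z$.
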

\begin{proof}
Since $S$ is a symbolic Rees algebra (of a module of rank $1$), the map $\mu:\sheafproj S\rightarrow X$ is small, \cite[Lem. 6.2]{KollarMori}. The case for $\kappa$ can be verified locally on $X$.

We begin under the assumption that $\Gamma$ is integral.  Let $U=\Spec R\subseteq X$ and $S'=S\big|_U$.  Suppose that $\kappa^{-1} W$ has a codimension $1$ component whose support is defined by a prime height 1 ideal $Q$ in $S$.  By \cite[Lemma 4.3]{GotoHerrmannNishidaVillamayor}, $Q \cap R$ is height zero or 1.  But this is impossible since it defines a subset of $W$, a set of codimension 2.  For the case when $\Gamma$ is not integral, we observe that the result holds for the Veronese algebra $S^{(n)} = \sR(X, n\Gamma)$ for sufficiently divisible $n$.  But $S$ is a finite $S^{(n)}$ algebra (see the proof of \cite[Lemma 2.4]{GotoHerrmannNishidaVillamayor}) and the result follows.

 The fact that $\kappa$ is a $\bA^1$ bundle, at least outside a set of codimension 2, follows immediately from the fact that in that case, $\Gamma$ is an integral Cartier divisor and so the section ring $S$ looks locally like $\O_X[t]$ outside a set of codimension 2.
\end{proof}

\begin{rmk} When $X$ is separated, the pullback $\mu^*\Gamma$ coincides with the pullback of de~Fernex and Hacon, \cite{DeFernexHaconSingsOnNormal}; see \autoref{rmk.pullbacksmall}.
\end{rmk}

We will be very interested in proving that various section rings are finitely generated and so recall:
\begin{samepage}
\begin{lemma}
\label{lem.BasicPropertiesOfLocalSectionAlgebras}
With notation as defined at the start of \autoref{subsec.LocalSectionRings},
\nopagebreak
\begin{itemize}
\item[(a)]  $S$ is finitely generated if and only if $S^{(n)}$ is finitely generated for some (equivalently any) $n > 0$.

\item[(b)]  Suppose that $g : Y \to X$ is a finite dominant map from another normal integral Noetherian scheme $Y$.  Let $T = \sR(Y, g^*\Gamma)$.  Then $T$ is finitely generated if $S$ is finitely generated.
\end{itemize}
\end{lemma}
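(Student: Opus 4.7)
If $S^{(n)}$ is finitely generated, the fact already invoked above---that $S$ is a finite $S^{(n)}$-module via \cite[Lem.~2.4]{GotoHerrmannNishidaVillamayor}---immediately gives finite generation of $S$ as an $R$-algebra.  Conversely, if $x_1, \dots, x_r$ are homogeneous generators of $S$ of degrees $d_1, \dots, d_r$, then the set of exponents $(a_1, \dots, a_r) \in \bN^r$ with $\sum a_j d_j \in n\bN$ is a finitely generated submonoid of $\bN^r$, and the corresponding monomials in the $x_j$ generate $S^{(n)}$ as an $R$-algebra.

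\textbf{Part (b), strategy.}  The plan is to construct a $\bQ$-Cartierization of $g^*\Gamma$ on $Y$ directly from the one for $\Gamma$; by the criterion recalled at the start of \autoref{subsec.LocalSectionRings}, existence of such a model forces $T = \sR(Y, g^*\Gamma)$ to be Noetherian, and for a graded algebra whose degree-zero piece is Noetherian, this is equivalent to being finitely generated over $T_0 = A$.  Finite generation of $S$ yields a small projective morphism $\mu \colon X' = \sheafproj S \to X$ on which $\Gamma' := \mu^{-1}_* \Gamma$ is $\bQ$-Cartier and $\mu$-ample.  I form $Y \times_X X'$, take the normalization $Y'$ of its unique component dominating $Y$, and write $\nu \colon Y' \to Y$ and $\tilde g \colon Y' \to X'$ for the resulting maps, so $g \circ \nu = \mu \circ \tilde g$.

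\textbf{Part (b), verification.}  The map $\nu$ is projective and birational, being an isomorphism over the preimage in $Y$ of the iso-locus of $\mu$.  Because $\mu$ is small, the exceptional locus of $Y \times_X X' \to Y$ equals $Y \times_X \mathrm{Exc}(\mu)$; since $g$ is a finite dominant map of normal integral schemes (hence codimension-preserving) and $\tilde g$ is finite as a base change of $g$, this set has dimension equal to $\dim \mathrm{Exc}(\mu)$ and thus codimension $\geq 2$ in $Y \times_X X'$ (which has the same dimension as $X'$).  Normalization preserves dimensions, so $\nu$ is small.  The divisor $\tilde g^* \Gamma'$ is $\bQ$-Cartier on $Y'$ (pullback by the finite map $\tilde g$ of a $\bQ$-Cartier divisor), and $\nu$-ample, since relative ampleness is stable under base change and finite pullback.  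As $\mu$ and $\nu$ are both small, each prime divisor on $Y'$ dominates a prime divisor on $X'$ and on $Y$; a coefficient check at codimension-$1$ points (noting that the ramification of $g$ at $\nu(E)$ agrees with that of $\tilde g$ at $E$, since $\mu$ and $\nu$ are isomorphisms near these generic points) identifies $\tilde g^* \Gamma'$ with the strict transform $\nu^{-1}_* g^*\Gamma$.  Hence $\nu$ realizes the $\bQ$-Cartierization of $g^*\Gamma$, and $T$ is finitely generated.  The main technical obstacle is the codimension bookkeeping needed for smallness, which must be tracked carefully through both the base change and the normalization.
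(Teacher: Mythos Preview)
Your argument is correct, and it takes a genuinely different route from the paper's proof of part (b).  The paper argues algebraically: after reducing to the affine and integral-$\Gamma$ case, it lets $T'$ denote the integral closure of $S$ inside $T$ and shows $T = T'$ by a codimension-two comparison (using that $T$ is a direct sum of reflexive $\O_Y$-modules while $T'$ is normal over $S$, and that $\kappa^{-1}W$ has codimension $\geq 2$ in $\sheafspec S$).  Since $K(S) \subseteq K(T)$ is finite, $T' = T$ is then a finite $S$-module and hence finitely generated.

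Your approach is geometric: you base-change the $\bQ$-Cartierization $\mu \colon X' \to X$ along $g$, normalize the dominant component, and verify that the resulting $\nu \colon Y' \to Y$ is itself a $\bQ$-Cartierization of $g^*\Gamma$, invoking the Koll\'ar--Mori criterion \cite[Lem.~6.2]{KollarMori} (recalled just before the lemma) to conclude.  This is cleaner conceptually and makes the functoriality of $\bQ$-Cartierization under finite base change transparent; the paper's argument, by contrast, is more self-contained and yields the extra information that $T$ is actually a finite $S$-module.  One small slip: you write $T_0 = A$, but $T_0 = \O_Y$ (or $B$ in the affine notation the paper uses).  Your codimension bookkeeping for smallness of $\nu$ is fine once one notes that $\tilde g$ is finite (normalization of an excellent scheme composed with a finite map) and that $\mathrm{Exc}(\nu) \subseteq \tilde g^{-1}(\mathrm{Exc}(\mu))$.
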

\end{samepage}
\begin{proof}
Part (a) is exactly \cite[Lemma 2.4]{GotoHerrmannNishidaVillamayor} (although it can also be found in numerous other sources).  For (b) we do not know a good reference but we sketch a proof here.  By (a), we may assume that $\Gamma$ is integral.  It is also harmless to assume that $X = \Spec A$ is affine and hence so is $Y = \Spec B$.  Then we can pass to the category of commutative rings so that $S$ and $T$ are actually rings (and not sheaves of rings).  In particular, we suppress all $g_*$ notation that we might otherwise need.  We have the diagram
\[
\xymatrix{
A \ar[r] \ar[d] & B \ar[d] \\
S \ar[r] & T \\
}
\]
First choose a single element $c \in A$ so that $S[c^{-1}] \cong A[c^{-1}, t]$ and $T[c^{-1}] \cong B[c^{-1}, t]$ (here we identify the $t$s).  It follows that $S[c^{-1}] \subseteq T[c^{-1}]$ is finite and hence integral and $K(S) \to K(T)$ is a finite extension as well.  Let $T'$ to be the integral closure of $S$ inside $T$.  We want to show that $T = T'$ which will complete the proof.

Recall we already assumed that $\Gamma$ is integral.  Let $W \subseteq X$ be a closed set of codimension 2 outside of which $\Gamma$ is Cartier.  Consider the functor $H^0(X \setminus W, \blank)$ applied to all of the rings (or sheaves of rings) involved.  As $T$ is a direct sum of reflexive $\O_Y$-modules, $H^0(X \setminus W, T) = H^0(Y \setminus g^{-1}(W), T)$ is just the global sections of $T$ by Hartog's Lemma for reflexive sheaves \cite{HartshorneGeneralizedDivisorsOnGorensteinSchemes}.  Thus $H^0(X \setminus W, T)$ is identified with $T$ since $X$ and $Y$ are affine.

On the other hand, $\kappa^{-1} W$ is a codimension $\geq 2$ subset of $\sheafspec S$, outside of which $T$ and $T'$ obviously agree.  Hence $H^0(X \setminus W, T) = H^0(X \setminus W, T') = H^0(\sheafspec S \setminus \kappa^{-1} W, T')$.  But $T'$ is normal and so we also have that $H^0(\sheafspec S \setminus \kappa^{-1} W, T') = T'$.  We have just shown that $T = T'$ as desired.
\end{proof}

We also will need to understand the canonical divisors of $\sheafspec S$ and $\sheafproj S$.

\begin{lemma}
\label{lem.PullBackDivisorsOnSymbolicRees}
Continuing with notation from the start of \autoref{subsec.LocalSectionRings}, assuming that $S$ is finitely generated, then $K_{\sheafproj S} \sim \mu^* K_X$.  If additionally, $\Gamma$ is a Weil divisor then, locally on the base, we have that $K_{\sheafspec S} \sim \kappa^* K_X + \kappa^* \Gamma$.  In particular, if $\Gamma = -K_X - B$, then $K_{\sheafspec S} \sim \kappa^*(-B)$ so that $K_{\sheafspec S} + \kappa^* B \sim 0$.  Thus if $\Gamma = -K_X$ then $S$ is quasi-Gorenstein.\footnote{Also called $1$-Gorenstein.}
\end{lemma}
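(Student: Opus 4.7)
The plan is to reduce both linear equivalences to codimension one, where the geometry is transparent, and then extend using normality.

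For $K_{\sheafproj S} \sim \mu^* K_X$, I would use that the previous lemma already shows $\mu$ is small: it is an isomorphism outside a closed subset $W \subseteq X$ of codimension $\geq 2$, with $\mu^{-1}(W)$ also of codimension $\geq 2$ in $\sheafproj S$. Pick any representative of $K_X$ and transport it via the isomorphism $\mu|_{\mu^{-1}(X \setminus W)}$ to obtain a canonical divisor on $\mu^{-1}(X \setminus W)$. Since $\sheafproj S$ is normal and $\mu^{-1}(W)$ has codimension $\geq 2$, this Weil divisor extends uniquely to $\sheafproj S$, and the extension simultaneously represents $\mu^* K_X$ (by the codimension-one definition of the pullback of a Weil divisor) and a canonical divisor $K_{\sheafproj S}$; the two are therefore linearly equivalent.

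For $\kappa : \sheafspec S \to X$ with $\Gamma$ integral, I would argue locally on $X$ and extend. Fix $x \in X$, choose an affine open $U \ni x$, and let $V \subseteq U$ be the open subset where $\Gamma|_V$ is Cartier; by the preceding lemma, $U \setminus V$ has codimension $\geq 2$ in $U$. Over $V$, writing $L = \O_V(\Gamma)$, the graded algebra is $S|_V \cong \Sym^{\bullet} L$, so $\sheafspec S|_V = \Spec \Sym^{\bullet} L$ is the total space of the dual line bundle $L^\vee = \O_V(-\Gamma)$. For any line bundle $M$ on $V$ with projection $\pi : \Tot(M) \to V$, the relative cotangent sheaf is $\pi^* M^\vee$, so $K_{\Tot(M)} \sim \pi^* K_V + \pi^* M^\vee$. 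Taking $M = \O_V(-\Gamma)$ so that $M^\vee = \O_V(\Gamma)$ gives
\[
K_{\sheafspec S|_V} \sim \kappa^* K_V + \kappa^* \Gamma|_V.
\]
To finish, I would extend across $\kappa^{-1}(U \setminus V)$, which has codimension $\geq 2$ in $\sheafspec S|_U$ by the previous lemma; since $\sheafspec S|_U$ is normal (as $S$ is a finitely generated symbolic Rees algebra), a rational function witnessing the equivalence on $\kappa^{-1}(V)$ extends to a rational function on $\kappa^{-1}(U)$ with the same divisor, yielding the formula locally on the base.

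The stated consequences are immediate from the additivity of the pullback: if $\Gamma = -K_X - B$ then $\kappa^* K_X + \kappa^* \Gamma \sim \kappa^*(-B)$, so $K_{\sheafspec S} \sim \kappa^*(-B)$ and $K_{\sheafspec S} + \kappa^* B \sim 0$; specializing to $B = 0$ makes $\sheafspec S$ quasi-Gorenstein. The main obstacle will be the pullback $\kappa^* \Gamma$ at points where $\Gamma$ is not Cartier: there $\kappa^* \Gamma$ is only meaningful after the codimension-two extension, and ensuring that representatives of $K_{\sheafspec S}$, $\kappa^* K_X$, and $\kappa^* \Gamma$ can be chosen compatibly is precisely what forces the ``locally on the base'' qualification in the statement.
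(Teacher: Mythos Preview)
Your argument is correct and, for the $\sheafproj S$ statement, it is exactly what the paper does: $\mu$ is small, so $K_{\sheafproj S}$ and $\mu^* K_X$ agree in codimension one and hence everywhere. For the $\sheafspec S$ statement, the paper simply cites \cite[Theorem 4.5]{GotoHerrmannNishidaVillamayor} for the formula $K_{\sheafspec S} \sim \kappa^* K_X + \kappa^* \Gamma$, whereas you supply a direct geometric proof: over the locus where $\Gamma$ is Cartier, $\sheafspec S$ is the total space of the line bundle $\O(-\Gamma)$, the relative cotangent sheaf is $\pi^*\O(\Gamma)$, and you extend across codimension two by normality. This is precisely the computation underlying the cited result, carried out in the language of divisors rather than graded canonical modules; what you gain is a self-contained argument, what the citation buys is brevity. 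One small phrasing point: you do not need to ``extend'' the rational function witnessing the linear equivalence, since $\kappa^{-1}(V)$ and $\kappa^{-1}(U)$ share the same function field; the real content is that the divisor identity extends because the complement has codimension $\geq 2$, which you do say.
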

\begin{proof}
Recall that $\kappa$ is an $\bA^1$-bundle outside a set of codimension 2 and hence $\kappa^*$ makes sense.
The computation of $K_{\sheafspec S}$ can be found in \cite[Theorem 4.5]{GotoHerrmannNishidaVillamayor}.  The initial statement that $K_{\sheafproj S} = \mu^* K_X$ is obvious since $\mu$ is small.
\end{proof}

\subsection{Positivity for non-$\bQ$-Cartier divisors}

In this section we will recall some definitions and results of \cite{ChiecchioUrbinatiAmpleWeilDivisors}. Let us recall that, if $f:Y\rightarrow X$ is a morphism of schemes, a coherent sheaf $\sF$ on $Y$ is \emph{relatively globally generated}, or \emph{$f$-globally generated}, if the natural map $f^*f_*\sF\rightarrow\sF$ is surjective. If $Y$ is a normal scheme and $D$ is a Weil divisor on $Y$, it might be that, for example, $\O_Y(D)$ is $f$-globally generated, but $\O_Y(2D)$ is not. To account for such pathologies we have to work asymptotically: we will say that a $\bQ$-divisor $D$ is \emph{relatively asymptotically globally generated}, or \emph{$f$-agg}, if $\O_Y(mD)$ is $f$-globally generated for all positive $m$ sufficiently divisible.

Let $f:Y\rightarrow X$ be a projective morphism of normal Noetherian schemes. A $\bQ$-Weil divisor $D$ on $X$ is \emph{$f$-nef} if, for every $f$-ample $\bQ$-Cartier $\bQ$-divisor $A$ on $Y$, $D+A$ is $f$-agg; if $X=\Spec k$, we will simply say that $D$ is \emph{nef}, \cite[Def. 2.4]{ChiecchioUrbinatiAmpleWeilDivisors}. The $\bQ$-divisor $D$ is \emph{$f$-ample} if, for every ample $\bQ$-Cartier $\bQ$-divisor $A$ on $Y$, there exists $b>0$ such that $bD-A$ is $f$-nef and the algebra of local sections $\sR(X,D)$ is finitely generated; if $X=\Spec k$, we will say that $D$ is \emph{ample}, \cite[Def. 2.14]{ChiecchioUrbinatiAmpleWeilDivisors}. Notice that when $D$ is $\Q$-Cartier, these notions coincide with the usual ones of nefness and amplitude. We remark that amplitude for Weil divisors is given by two conditions: a positivity one -- which is based on the fact that the regular ample cone is the interior of the nef cone -- and a technical one -- on the finite generation of the algebra of local sections. These two conditions are independent; in particular there are examples of Weil divisors $A$ satisfying the positivity condition, but with algebra of local sections $\sR(X,A)$ not finitely generated, \cite[Example 2.20]{ChiecchioUrbinatiAmpleWeilDivisors}.

These notions of positivity behave very much like in the $\QQ$-Cartier world: for example, if $A$ is an ample Weil $\QQ$-divisor, and $D$ is a globally generated $\QQ$-Cartier divisor, $D+A$ is ample, \cite[Lem. 2.18(i)]{ChiecchioUrbinatiAmpleWeilDivisors}.

\begin{lem} \label{lem:FiniteGenNonPrinc}
Let $E$ be a $\QQ$-divisor on a normal Noetherian projective scheme $X$ over a field $k$. If the algebra of local section $\sR(X,E)$ is finitely generated then there exists a Cartier divisor $L$ such that $L +E$ is an ample Weil divisor.
\end{lem}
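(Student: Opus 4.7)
The plan is to use the $\bQ$-Cartierization of $E$ to find $L$ as a sufficiently large multiple of an ample Cartier divisor so that $\mu^*(L+E)$ is ample $\bQ$-Cartier upstairs, and then verify the defining conditions of amplitude for $L+E$ on $X$. By \autoref{lem.BasicPropertiesOfLocalSectionAlgebras}(a), together with the invariance of amplitude under passage to a positive multiple, I may first replace $E$ by $nE$ for $n$ sufficiently divisible and thereby assume $E$ is an integral Weil divisor. Let $\mu : Y = \sheafproj \sR(X,E) \to X$ denote the resulting small projective birational morphism; by construction $E' := \mu^* E$ is $\bQ$-Cartier on $Y$ and $\mu$-ample. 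Fixing a very ample Cartier divisor $H$ on $X$, the pullback $\mu^* H$ is nef on $Y$, so for $m \gg 0$ the divisor $m\mu^* H + E' = \mu^*(mH + E)$ is ample on $Y$, and I set $L := mH$.

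Finite generation of $\sR(X, L+E)$ is local on $X$, and since $L$ is Cartier it is locally trivial: for a sufficiently small affine open $U \subseteq X$, choosing a trivializing section of $\O_X(L)|_U$ identifies $\sR(X, L+E)|_U$ with $\sR(X, E)|_U$ as a graded $\O_U$-algebra, so finite generation is inherited from $\sR(X, E)$. For the positivity condition, fix an ample $\bQ$-Cartier divisor $A$ on $X$. Since $\mu^*(L+E)$ is ample and $\mu^*A$ is nef on $Y$, a standard openness-of-the-ample-cone argument gives $b > 0$ such that $b\mu^*(L+E) - \mu^*A$ is ample on $Y$. Then for every ample $\bQ$-Cartier $A'$ on $X$ we have $\mu^*(b(L+E) - A + A') = b\mu^*(L+E) - \mu^*A + \mu^*A'$ ample on $Y$ (ample plus nef), so $\O_Y(n\mu^*(b(L+E) - A + A'))$ is globally generated for $n$ sufficiently divisible.

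It remains to descend global generation from $Y$ to $X$ in order to conclude that $b(L+E) - A + A'$ is agg on $X$ and hence that $b(L+E) - A$ is nef on $X$. Since $\mu$ is small and $X$ is normal, $\mu_* \O_Y(\mu^* D) = \O_X(D)$ for every Weil divisor $D$ on $X$ (both sheaves are reflexive and coincide outside a codimension $\geq 2$ set), and in particular $H^0(X, \O_X(D)) = H^0(Y, \O_Y(\mu^* D))$, so the available sections agree and the evaluation map is sheaf-surjective on $X$ at least away from a codimension $\geq 2$ subset. The main obstacle is to promote this to sheaf-surjectivity at all points of $X$; I expect this to follow either from a characterization of amplitude of Weil $\bQ$-divisors via the $\bQ$-Cartierization already established in \cite{ChiecchioUrbinatiAmpleWeilDivisors}, or directly from reflexivity of $\O_X(D)$ reducing the check to a codimension-one statement that is satisfied away from the small locus. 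Combined with the finite generation verified above, this yields the desired amplitude of $L+E$.
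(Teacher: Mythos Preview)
Your approach via the $\bQ$-Cartierization is natural but, as you yourself flag, it leaves a real gap at the descent step. Pushing forward the surjection $H^0(Y,\O_Y(\mu^*D))\otimes\O_Y \twoheadrightarrow \O_Y(\mu^*D)$ along $\mu$ only gives an exact sequence up to $R^1\mu_*$ of the kernel, so surjectivity on $X$ is not automatic. Your second proposed fix, reducing to codimension one by reflexivity, does not work: the image of the evaluation map $H^0(X,\O_X(D))\otimes\O_X \to \O_X(D)$ has no reason to be reflexive, so agreement outside a codimension-two set does not force global generation at the remaining points. Your first proposed fix, invoking the characterization of amplitude via the $\bQ$-Cartierization from \cite{ChiecchioUrbinatiAmpleWeilDivisors}, does close the gap, but it renders the entire nef/agg verification you carried out superfluous: once you know $\mu^*(L+E)$ is ample on $Y$ and $\sR(X,L+E)$ is finitely generated, that theorem hands you amplitude of $L+E$ directly.

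By contrast, the paper avoids the descent issue altogether by never leaving $X$. After reducing to $E$ integral with $\sR(X,E)$ generated in degree one, it picks ample Cartier $H$ and uses Serre-type global generation to make $\O_X(mH+E)$ globally generated; the degree-one generation hypothesis then forces $\O_X(E)^{\otimes n}\twoheadrightarrow\O_X(nE)$, so $\O_X(n(mH+E))$ is globally generated for all $n$, i.e.\ $mH+E$ is agg on $X$ outright. Adding one more $H$ and applying \cite[Lem.~2.18(i)]{ChiecchioUrbinatiAmpleWeilDivisors} (ample Cartier plus agg is ample) finishes. The key difference is that the paper exploits the surjection $\O_X(E)^{\otimes n}\twoheadrightarrow\O_X(nE)$ coming from finite generation in degree one to propagate global generation to all multiples directly on $X$, sidestepping any comparison with $Y$.
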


\begin{proof} Notice that $n(L+E)$ is ample for some/every $n>0$, if and only if $L+E$ is ample, \cite[Lem. 2.18(b)]{ChiecchioUrbinatiAmpleWeilDivisors}. So, without loss of generality, we can assume that $\sR(X,E)$ is generated in degree $1$ and that $E$ is integral. Let $H$ be an ample Cartier divisor. By definition, there exists $m>0$ such that $\cO_X(mH+E)$ is globally generated. There is a surjection
$$
\cO_X(E)^{\otimes n}\twoheadrightarrow\cO_X(E)^{\cdot n}=\cO_X(nE),
$$
where the last equality is a consequence of the assumption on the finite generation of the algebra of local sections. Thus, for each $n>0$, $\cO_X(n(mH+E))$ is globally generated, that is, $mH+E$ is asymptotically globally generated. Moreover, since $H$ is Cartier, $\sR(X,mH+E)$ is also finitely generated. By \cite[Lem. 2.18(i)]{ChiecchioUrbinatiAmpleWeilDivisors}, $mH+E+H=(m+1)H+E$ is ample.
\end{proof}

The main characterization of the above positivity is in terms of their $\QQ$-Cartierization. Let $X$ is a normal projective Noetherian scheme over an algebraically closed field $k$, and let $D$ be a $\QQ$-divisor with $S=\sR(X,D)$ finitely generated. Let $\sheafproj S\xrightarrow{\mu} X$. Notice that $D'=\mu^{-1}_*D$ is $\QQ$-Cartier. Then $D$ is nef/ample if and only if so is $D'$, \cite[Theorems 3.3 and 3.6]{ChiecchioUrbinatiAmpleWeilDivisors}.

Using this characterization, Urbinati and the first author proved Fujita vanishing for locally free sheaves, \cite[Cor. 4.2]{ChiecchioUrbinatiAmpleWeilDivisors}. Let $X$ be a normal projective Noetherian scheme over an algebraically closed field $k$, let $A$ be an ample $\QQ$-divisor on $X$ and let $\sF$ be a locally free coherent sheaf on $X$. There exists an integer $m(A,\sF)$ such that $$H^i(X,\sF\otimes\cO_X(mA+D))=0$$ for all positive $m$ divisible by $m(A,\sF)$, all nef Cartier divisors $D$, and all $i>0$.



\subsection{Pullback of Weil divisors}\label{subsect.pullbackWeildivisors}

Let $f:Y\rightarrow X$ be a proper birational morphism of normal Noetherian separated schemes. In \cite{DeFernexHaconSingsOnNormal}, de Fernex and Hacon introduced a way of pulling back a Weil divisor on $X$ via $f$.

For any Weil divisor $D$ on $X$, the \emph{$\natural$-pullback} of $D$ along $f$, denoted by $f^\natural D$, is the Weil divisor on $Y$ such that
\begin{eqnarray}\label{eq:naturalpullbackchar}
\cO_Y(-f^\natural D)=(\cO_X(-D)\cdot\cO_Y){}^\vee{}^\vee,
\end{eqnarray}
\cite[Def 2.6]{DeFernexHaconSingsOnNormal}. The negative sign appearing is so that, when $D$ is effective, we are pulling back the ideal defining it as a subscheme. The \emph{pullback} of $D$ along $f$ is
$$
f^*D=\liminf_m\frac{f^\natural(mD)}{m}=\lim_m\frac{f^\natural(m!D)}{m!}\quad\textrm{coefficient-wise}.
$$

The above is well-defined, the infimum limit over $m$ is a limit over $m!$ and an $\RR$-divisor, \cite[Lem. 2.8 and Def. 2.9]{DeFernexHaconSingsOnNormal}. Moreover, the above definition of $f^*$ coincides with the usual one whenever $D$ is $\QQ$-Cartier, \cite[Prop. 2.10]{DeFernexHaconSingsOnNormal}.

\begin{rmk}\label{rmk.pullbacksmall} If $f:Y\rightarrow X$ is a small, projective birational morphism, then $f^*D=f^{-1}_*D$
\end{rmk}

This notion of pullback is not quite functorial, unfortunately. Let $f:Y\rightarrow X$ and $g:V\rightarrow Y$ be two birational morphisms of normal Noetherian separated schemes, and $D$ be a Weil divisor on $X$. The divisor $(fg)^\natural(D)-g^\natural f^\natural(D)$ is effective and $g$-exceptional. Moreover, if $\cO_X(-D)\cdot\cO_Y$ is an invertible sheaf, $(fg)^\natural(D)=g^\natural f^\natural(D)$, \cite[Lem. 2.7]{DeFernexHaconSingsOnNormal}.

\begin{lem}\label{lem.resofKX+D} Let $X$ be a normal Noetherian scheme. Let $D$ be a Weil divisor on $X$, such that $S=\sR(X,D)$ is finitely generated; let $\khat{X}=\sheafproj S\xrightarrow{\mu} X$ and let $\khat{D}:=\mu^{-1}_*D$. Then $\cO_X(mD)\cdot \cO_{\khat{X}}=\cO_{\khat{X}}(m\khat{D})$ for all positive $m$ sufficiently divisible.  In particular $\cO_X(mD)\cdot \cO_{\khat{X}}$ is reflexive for $m$ sufficiently divisible.
\end{lem}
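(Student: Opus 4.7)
The plan is to reduce to the case that $S$ is generated in degree $1$ and then read the conclusion off the standard construction of the relative $\sheafproj$.

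First, since $S = \sR(X, D)$ is finitely generated, by \autoref{lem.BasicPropertiesOfLocalSectionAlgebras}(a) some Veronese $S^{(d_0)} = \sR(X, d_0 D)$ is generated in degree $1$. Replacing $D$ by $d_0 D$ (which does not change $\khat{X}$ or $\khat{D}$ and only restricts us to $m$ even more divisible) we may therefore assume $S$ itself is generated in degree $1$ as a graded $\O_X$-algebra. Under this assumption the standard construction of relative $\sheafproj$ produces a line bundle $\O_{\khat{X}}(1)$ together with surjections $\mu^* S_m = \mu^* \O_X(mD) \twoheadrightarrow \O_{\khat{X}}(m)$ for every $m \geq 1$.

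Next I would identify $\O_{\khat{X}}(1)$ with the reflexive sheaf $\O_{\khat{X}}(\khat{D})$. The map $\mu$ is small and birational, so $K(\khat{X}) = K(X)$, and both $\O_{\khat{X}}(1)$ and $\O_{\khat{X}}(\khat{D})$ may be viewed as rank $1$ reflexive sheaves inside this common function field (the first because it is a line bundle). On the open set $U \subseteq X$ where $\mu$ is an isomorphism and $D|_U$ is Cartier, the tautological surjection $\mu^* \O_X(D) \twoheadrightarrow \O_{\khat{X}}(1)$ identifies $\O_{\khat{X}}(1)|_{\mu^{-1}U}$ with $\O_X(D)|_U = \O_{\khat{X}}(\khat{D})|_{\mu^{-1}U}$. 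Because $\mu$ is small, the complement has codimension $\geq 2$ in $\khat{X}$, and two reflexive rank $1$ sheaves that agree in codimension $1$ on a normal scheme agree globally. Hence $\O_{\khat{X}}(m) = \O_{\khat{X}}(m\khat{D})$ for all $m$, and the tautological surjection becomes $\mu^* \O_X(mD) \twoheadrightarrow \O_{\khat{X}}(m\khat{D})$.

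Finally, by the very definition of the inverse image ideal (product), the image of $\mu^* \O_X(mD) \to K(\khat{X})$ is $\O_X(mD) \cdot \O_{\khat{X}}$. The surjection above therefore yields $\O_X(mD)\cdot \O_{\khat{X}} = \O_{\khat{X}}(m\khat{D})$ for every $m$ divisible by $d_0$, proving the main assertion. The ``in particular'' statement is then immediate: for $m$ sufficiently divisible $\khat{D}$ is even Cartier (since it is $\bQ$-Cartier and $\mu$-ample by construction of the $\bQ$-Cartierization), so $\O_{\khat{X}}(m\khat{D})$ is a line bundle and in particular reflexive.

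The only nontrivial step is the identification $\O_{\khat{X}}(1) \cong \O_{\khat{X}}(\khat{D})$, and this is really just bookkeeping: one uses the smallness of $\mu$ (guaranteed by its being the $\bQ$-Cartierization) to promote the obvious agreement on the $\mu$-isomorphism locus to a global equality of reflexive sheaves. Everything else is either formal from $\sheafproj$ theory or has already been recorded in the excerpt.
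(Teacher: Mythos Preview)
Your argument is correct and follows essentially the same route as the paper. The paper obtains the surjection $\mu^*\O_X(mD)\twoheadrightarrow\O_{\khat X}(m\khat D)$ from relative ampleness of $\khat D$ (hence $\mu$-global generation, together with $\mu_*\O_{\khat X}(m\khat D)=\O_X(mD)$ since $\mu$ is small), whereas you obtain the same surjection as the tautological $\sheafproj$ map after passing to a Veronese and identifying $\O_{\khat X}(1)$ with $\O_{\khat X}(\khat D)$ via smallness; these are two packagings of the same map. For the final equality the paper explicitly separates it into a surjection $\O_X(mD)\cdot\O_{\khat X}\twoheadrightarrow\O_{\khat X}(m\khat D)$ (kill torsion) and an inclusion $\O_X(mD)\cdot\O_{\khat X}\hookrightarrow(\O_X(mD)\cdot\O_{\khat X})^{\vee\vee}=\O_{\khat X}(m\khat D)$, while you collapse both into ``the image inside $K(\khat X)$ is $\O_X(mD)\cdot\O_{\khat X}$.'' That shortcut is fine, but note it tacitly uses that the tautological surjection, composed with $\O_{\khat X}(m\khat D)\hookrightarrow K(\khat X)$, \emph{equals} the natural map $\mu^*\O_X(mD)\to K(\khat X)$; this holds because the two maps agree on $\mu^{-1}U$ by your very identification of $\O_{\khat X}(1)$, and two maps to a torsion-free sheaf agreeing on a dense open coincide. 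It would be worth saying that sentence explicitly.
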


\begin{proof} Since $\khat{D}$ is $\mu$-ample, see \cite[Lemma 6.2]{KollarMori}, $\O_{\khat{Y}}(m\khat{D})$ is $\mu$-globally generated for all positive $m$ sufficiently divisible, that is, the natural map
$$
\mu^*\mu_*\cO_{\khat{X}}(m\khat{D})\rightarrow\cO_{\khat{X}}(m\khat{D})
$$
is surjective for positive $m$ sufficiently divisible. Since $\mu$ is small, $\mu_*\cO_{\khat{X}}(m\khat{D})=\O_X(mD)$ for all integers $m$ (this is well-known, for a proof see \cite[Lemma 2.8]{ChiecchioMMPnoflips}). Thus, for all positive $m$ sufficiently divisible, we have a surjection
$$
\mu^*\cO_X(mD)\twoheadrightarrow\cO_{\khat{X}}(m\khat{D}).
$$
Notice that $\cO_{\khat{X}}\cdot\cO_X(mD)$ is isomorphic to the quotient of $\mu^*\cO_X(mD)$ by its torsion, \cite[Caution II.7.12.2]{Hartshorne}. Since $\cO_{\khat{X}}(m\khat{D})$ is torsion free, the above surjection induces a surjection
$$
\cO_{\khat{X}}\cdot\cO_X(mD)\twoheadrightarrow\cO_{\khat{X}}(m\khat{D}).
$$
On the other hand, since $\mu$ is small, for all integers $m$, $\cO_{\khat{X}}(m\khat{D})=\big(\cO_{\khat{X}}\cdot\cO_X(mD)\big){}^\vee{}^\vee$; since $\cO_{\khat{X}}\cdot\cO_X(mD)$  is torsion-free, we have a natural inclusion
$$
\cO_{\khat{X}}\cdot\cO_X(mD)\hookrightarrow\cO_{\khat{X}}(m\khat{D}).
$$
\end{proof}

\begin{lem}\label{lem.compositionofpullbacks} Let $X$ be a normal Noetherian separated scheme. Let $D$ be a Weil divisor on $X$, such that $\sR(X,-D)$ is finitely generated, and let $\sheafproj\sR(X,-D)\xrightarrow{\mu} X$. Let $f:Y\rightarrow X$ be any birational morphism factoring as $Y\xrightarrow{g}\sheafproj\sR(X,-D)\xrightarrow{\mu} X$, with $Y$ a normal Noetherian separated scheme. Then, for any positive $m$ sufficiently divisible, $f^\natural(mD)=g^\natural\mu^\natural(mD)=g^*\mu^\natural(mD)$. Therefore, $f^*(D)=g^*\mu^*(D)$.
\end{lem}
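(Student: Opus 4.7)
The plan is to leverage \autoref{lem.resofKX+D} applied to $-D$, together with the partial functoriality of the $\natural$-pullback recorded just before that lemma. As a first step, I would apply \autoref{lem.resofKX+D} to the divisor $-D$: for every positive integer $m$ sufficiently divisible,
\[
\cO_X(-mD) \cdot \cO_{\khat{X}} \;=\; \cO_{\khat{X}}(-m\khat{D}),
\]
where $\khat{D} = \mu^{-1}_* D$. Since $\khat{D}$ is $\mu$-ample this sheaf is invertible, and unpacking the definition of $\natural$-pullback gives $\mu^\natural(mD) = m\khat{D}$, which is Cartier.

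Next, because $\cO_X(-mD)\cdot \cO_{\khat{X}}$ is invertible, the functoriality statement \cite[Lem.~2.7]{DeFernexHaconSingsOnNormal} recalled before \autoref{lem.resofKX+D} applies to the composition $f = \mu g$, yielding
\[
f^\natural(mD) \;=\; g^\natural \mu^\natural(mD) \;=\; g^\natural (m \khat{D}).
\]
I would then observe that, since $m\khat{D}$ is Cartier, its $\natural$-pullback along $g$ coincides with the usual Cartier pullback: the double-dualization in the defining formula $(\cO_{\khat{X}}(-m\khat{D}) \cdot \cO_Y)^{\vee\vee}$ is redundant because $g^*$ of an invertible sheaf is already invertible. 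This identifies $g^\natural(m\khat{D}) = g^*(m\khat{D}) = g^*\mu^\natural(mD)$, giving the first assertion $f^\natural(mD) = g^\natural \mu^\natural(mD) = g^*\mu^\natural(mD)$.

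For the $\bR$-divisor statement $f^*D = g^*\mu^*D$, I would pass to the limit along the sequence $m!$, using the definition $f^*D = \lim_m f^\natural(m!D)/m!$. The factorials are eventually sufficiently divisible for the preceding identity to apply; moreover $\mu^\natural(m!D)/m! = \khat{D}$ is eventually constant, so $\mu^*D = \khat{D}$. Since $g^*$ is linear on Cartier divisors and the limit is taken coefficient-wise, $g^*$ commutes with the limit, yielding
\[
f^*D \;=\; \lim_m \frac{g^*\mu^\natural(m!D)}{m!} \;=\; g^*\khat{D} \;=\; g^*\mu^*D.
\]
The only real bookkeeping to check is that the several ``sufficiently divisible'' conditions occurring in the argument can be satisfied simultaneously, which is immediate by passing to a cofinal subsequence of $m!$; accordingly I do not anticipate any serious technical obstacle here.
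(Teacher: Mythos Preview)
Your proposal is correct and follows essentially the same approach as the paper: both use \autoref{lem.resofKX+D} (applied to $-D$) to see that $\cO_X(-mD)\cdot\cO_{\khat{X}}$ is invertible, then conclude functoriality of $\natural$-pullback for the composition $f=\mu g$. The paper unwinds this as a direct chain of sheaf identities $\cO_Y(-f^\natural mD)=\cO_X(-mD)\cdot\cO_Y=\cO_X(-mD)\cdot\cO_{\khat{X}}\cdot\cO_Y=\cO_{\khat{X}}(-\mu^\natural mD)\cdot\cO_Y=\cO_Y(-g^\natural\mu^\natural mD)$, whereas you invoke the functoriality statement \cite[Lem.~2.7]{DeFernexHaconSingsOnNormal} that the paper already recalled; your handling of the limit for $f^*D=g^*\mu^*D$ is also slightly more explicit than the paper's one-line remark, but the content is the same.
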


\begin{proof} This is an application of \autoref{lem.resofKX+D} which we now explain.  Consider the following chain of equalities.  The first and last equalities are by definition and the third is by \autoref{lem.resofKX+D} since $m$ is sufficiently divisible:
\[
\begin{array}{rl}
   &  \O_Y(-f^{\natural} mD)\\
 = &  \O_X(-mD) \cdot \O_Y\\
 = &  \O_X(-mD) \cdot \O_{X'} \cdot \O_Y\\
 = &  \O_{X'}(-\mu^{\natural} mD) \cdot \O_Y\\
 = &  \O_{Y}(-g^{\natural} \mu^{\natural} mD)
\end{array}
\]
This proves the first statement.  The final statement is a consequence of the fact that ${1 \over m} \mu^{\natural{mD}} = \mu^*(D)$ for $m$ sufficiently divisible by our finite generation hypothesis.
\end{proof}

\begin{lemma}
\label{lemma.FactorizingPullback}
Suppose we have a composition of birational morphisms $f' : Y' \xrightarrow{g} Y \xrightarrow{f} X$ between normal varieties and $D$ is a Weil on $X$, then $f^* D = g_* f'^* D$.
\end{lemma}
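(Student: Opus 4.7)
The plan is to reduce everything to the non-functoriality estimate for the $\natural$-pullback. Recall from \cite[Lem.~2.7]{DeFernexHaconSingsOnNormal} that for the composition $f' = f \circ g$ and any Weil divisor $D$ on $X$, the difference $E_m := (f')^{\natural}(mD) - g^{\natural} f^{\natural}(mD)$ is an effective, $g$-exceptional Weil divisor on $Y'$ for every positive integer $m$. Since $g_*$ annihilates $g$-exceptional prime divisors, applying $g_*$ immediately yields
\[
g_* (f')^{\natural}(mD) \;=\; g_* g^{\natural} f^{\natural}(mD).
\]

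Next, I would verify the auxiliary identity $g_* g^{\natural} F = F$ for any Weil divisor $F$ on $Y$. Since $g$ is birational, the complement of its exceptional locus is a big open set on which $g^{\natural} F$ agrees with $F$ itself (under the isomorphism $g$ induces there), so the non-exceptional part of $g^{\natural} F$ is precisely the strict transform $g^{-1}_* F$. The exceptional components of $g^{\natural} F$ are killed by $g_*$, while $g_* g^{-1}_* F = F$ by the standard behaviour of strict transforms under pushforward for birational morphisms of normal varieties. By $\bZ$-linearity in the coefficients of $F$, this identity extends to all Weil divisors; applying it with $F = f^{\natural}(mD)$ gives $g_* (f')^{\natural}(mD) = f^{\natural}(mD)$ for every $m \geq 1$.

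Finally, I would pass to the limit. Using the de~Fernex--Hacon definitions
\[
f'^* D \;=\; \lim_{m} \frac{(f')^{\natural}(m!\, D)}{m!}, \qquad f^* D \;=\; \lim_{m} \frac{f^{\natural}(m!\, D)}{m!},
\]
where the limits are taken coefficient-wise, I would divide the previous equality by $m!$ and send $m \to \infty$. Here the only point that requires care, and the step I expect to be the main obstacle, is the commutation of $g_*$ with the coefficient-wise limit: this is harmless because the relevant supports are contained in the fixed finite set consisting of the prime divisors in $g^{-1}(\Supp f^{\natural} D)$ together with the $g$-exceptional prime divisors of $Y'$, independently of $m$, and $g_*$ is just $\bR$-linear projection onto the non-$g$-exceptional components of this finite-dimensional vector space. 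Commuting the limit therefore gives $g_* f'^* D = f^* D$, as desired.
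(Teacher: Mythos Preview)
Your proof is correct. The paper's argument is more direct: it simply observes that for each prime divisor $E$ on $Y$, the coefficient of $E$ in both $f^*D$ and $g_* f'^* D$ is computed by the \emph{same} discrete valuation. Since $g$ is birational, $E$ and its strict transform $g^{-1}_* E$ on $Y'$ give the same DVR $(A_E,\fm_E)$ inside the common function field, and in each case the coefficient is $\lim_m \nu_E(m!D)/m!$ where $\nu_E(mD)$ is the power of $\fm_E$ appearing in $\O_X(-mD)\cdot A_E$. That is the whole proof.

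Your route reaches the same conclusion but takes a detour through the de~Fernex--Hacon non-functoriality estimate $(f')^\natural(mD) - g^\natural f^\natural(mD) \geq 0$ and $g$-exceptional. This is correct but unnecessary: the identity $g_*(f')^\natural(mD) = f^\natural(mD)$ already follows directly from the DVR observation you use in your Step~3 to prove $g_* g^\natural F = F$ (applied with $\O_X(-mD)$ in place of $\O_Y(-F)$). In other words, your Step~3 contains essentially the paper's entire argument, and your Steps~1--2 can be bypassed. The limiting argument in your Step~4 is fine and matches the paper's; your caution about finite support is justified (all $f^\natural(mD)$ are supported on the strict transforms of components of $D$ together with $\Exc(f)$), though in practice one only needs the coefficient-wise statement at a single prime $E$.
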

\begin{proof}
To check the identity it suffices to show $\ord_E(f^* D) = \ord_E(g_* f'^* D)$ for each prime divisor $E$ on $Y$. The generic point of each prime divisor $E$ on $Y$ gives rise to a DVR $(A_E, \fm_E)$. For any sufficiently divisible positive integer $m$, set $\nu_{E}(m D)$ the power of $\fm_E$ agreeing with $\cO_X(-mD) A_E$; so by definition $\ord_E(f^* D) = \lim \nu_{E}(m D)/m$. A similar calculation computes $f'^* D$ on $Y'$. Finally, as $g$ is birational $g_* f'^* D$ keeps the same coefficients on divisors that are not contracted by $g$, thus, $\ord_E(f^* D) = \ord_E(g_* f'^* D)$ for each prime divisor $E$ on $Y,$ as desired.
\end{proof}

We define multiplier ideals in a way which is a slight generalization of the one of \cite{DeFernexHaconSingsOnNormal}.  

\begin{definition} Let $X$ be a normal quasi-projective variety over an algebraically closed field of characteristic zero, $\Delta$ an $\RR$-Weil divisor, and $I=\prod \sJ_k^{a_k}$ a formal $\RR$-linear product of non-zero fractional ideal sheaves. The collection of the data of $X$, $\Delta$ and $I$ will be called a \emph{triple}, and it will be denoted by $(X,\Delta,I)$. We say that the triple is \emph{effective} if $\Delta\geq0$, $I=\prod \sJ_k^{a_k}$ where all the $\sJ_k$s are ideals and $a_k\geq0$ for all $k$.
\end{definition}

\begin{rmk} Notice that we do not assume that $K_X + \Delta$ is $\bR$-Cartier.
\end{rmk}

\begin{rmk} A triple is effective if and only if $(X,\Delta+I)$ is an effective pair in the sense of \cite[Definition 4.3]{DeFernexHaconSingsOnNormal}.
\end{rmk}

\begin{definition}
 \label{def.MultiplierIdealDefinition}Let $(X,\Delta,I)$ be an effective triple, and let $m$ be a positive integer. Let $f:Y\rightarrow X$ be a log resolution of the pair $(X,\O_X(-m(K_X+\Delta))+I)$, \cite[Definition 4.1 and Theorem 4.2]{DeFernexHaconSingsOnNormal}. Let $I=\prod \sJ_k^{a_k}$ be a formal product, and $\O_Y\cdot \sJ_k=\O_X(-G_k)$. We define the sheaf
$$
\mJ_m(X,\Delta,I):=f_*\O_Y(\lceil K_Y-\frac{1}{m}f^\natural(m(K_X+\Delta))-\sum a_kG_k\rceil).
$$
\end{definition}

\begin{rmk}\label{rmk:multi} The reason for this new notation is that our notation is slightly more general than the one of \cite{DeFernexHaconSingsOnNormal}. In particular, de Fernex and Hacon did not include a boundary divisor term.

This might cause some confusion since the reader might think one could absorb the divisor $\Delta$ into the ideal $I$ (indeed, what is a divisor but a formal combination of height 1 ideals).  Unfortunately, this does not yield the same object (and in particular, does not yield the usual multiplier ideal even when $K_X + \Delta$ is $\bQ$-Cartier).  The difference is that asymptotics are already built into $\bQ$-Cartier $K_X + \Delta$ whereas no asymptotics are built into $I$ in \cite{DeFernexHaconSingsOnNormal}.

In particular, let $\sI(\Delta)$ denote the formal product of ideals corresponding to $\Delta$ in the obvious way.  Then in general, $\mJ_m(X,\Delta,I)\supseteq\mJ_m(X,I \cdot \sI(\Delta))$: we have
{\setlength\arraycolsep{1pt}
\begin{eqnarray*}
\mJ_m(X,I\cdot\sI(\Delta))&=&f_*\O_Y(\lceil K_Y-\frac{1}{m}f^\natural(mK_X)-f^\natural\Delta-\sum a_kG_k\rceil)\\
&\subseteq&f_*\O_Y(\lceil K_Y-\frac{1}{m}f^\natural(mK_X)-\frac{1}{m}f^\natural(m\Delta)-\sum a_kG_k\rceil)\\
&\subseteq&f_*\O_Y(\lceil K_Y-\frac{1}{m}f^\natural(mK_X+m\Delta)-\sum a_kG_k\rceil)\\
&=&\mJ_m(X,\Delta,I).
\end{eqnarray*}}

\noindent
Here the first containment is \cite[Lemma 2.8]{DeFernexHaconSingsOnNormal} and the second is a consequence of \cite[Remark 2.11]{DeFernexHaconSingsOnNormal}.
\end{rmk}

\begin{lem} Let $(X,\Delta,I)$ be an effective triple. The sheaf $\mJ_m(X,\Delta,I)$ is a (coherent) sheaf of ideals on $X$, and its definition is independent of the choice of $f$.
\end{lem}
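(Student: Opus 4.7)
The plan is to prove the two assertions (coherence as a sheaf of ideals, and independence of the log resolution $f$) separately. For coherence, since log resolutions are proper and $\cO_Y(D_m)$ with $D_m := \lceil K_Y - \tfrac{1}{m}f^\natural(m(K_X+\Delta)) - \sum a_k G_k\rceil$ is coherent on $Y$, the pushforward $f_*\cO_Y(D_m)$ is automatically coherent on $X$.

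To show $\mJ_m(X,\Delta,I) \subseteq \cO_X$, I would argue locally in terms of divisors. A local section on an open $U \subseteq X$ corresponds to some $g \in K(X) = K(Y)$ with $\divisor_Y(g) + D_m \geq 0$ on $f^{-1}(U)$. Applying $f_*$ to this inequality and using $f_*\divisor_Y(g) = \divisor_X(g)$ (push-forward of Weil divisors is additive and kills $f$-exceptional components), the problem reduces to verifying $f_*D_m \leq 0$ on $U$; for then $\divisor_X(g) \geq 0$ on $U$ and normality of $X$ gives $g \in \cO_X(U)$. Having fixed compatible canonical divisors $K_X$ and $K_Y$ from a common rational section of $\omega_X$, the non-exceptional coefficients of $D_m$ can be read off one prime at a time: $K_Y$ contributes $K_X$; the term $\tfrac{1}{m}f^\natural(m(K_X+\Delta))$ contributes $K_X+\Delta$ on its strict-transform part (the double-dual in the definition of $f^\natural$ only alters coefficients along the exceptional locus, which lies over a codimension $\geq 2$ subset of $X$); and each $G_k$ contributes $\divisor_X(\sJ_k) \geq 0$ since $\sJ_k$ is an ideal. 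Hence $f_*D_m = \lceil -\Delta - \sum a_k \divisor_X(\sJ_k)\rceil \leq 0$.

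For independence from $f$, the plan is the classical dominating-log-resolution argument. Given two log resolutions $f_1, f_2\colon Y_1,Y_2 \to X$, take a common log resolution $h\colon Z \to X$ dominating both via $p_i\colon Z \to Y_i$; by symmetry it suffices to show $p_{i*}\cO_Z(D_m^Z) = \cO_{Y_i}(D_m^{Y_i})$. Because $f_i$ is a log resolution of $(X,\cO_X(-m(K_X+\Delta))+I)$, both $\tfrac{1}{m}f_i^\natural(m(K_X+\Delta))$ and each $G_k^{Y_i}$ are $\bQ$-Cartier on $Y_i$; then \cite[Lem.~2.7]{DeFernexHaconSingsOnNormal} gives $p_i^\natural f_i^\natural = (f_ip_i)^\natural$, and on $\bQ$-Cartier divisors $p_i^\natural = p_i^*$. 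Setting $F_i := \tfrac{1}{m}f_i^\natural(m(K_X+\Delta)) + \sum a_k G_k^{Y_i}$, the sought equality becomes
$$
p_{i*}\cO_Z\bigl(\lceil K_Z - p_i^* F_i\rceil\bigr) = \cO_{Y_i}\bigl(\lceil K_{Y_i} - F_i\rceil\bigr),
$$
the classical projection-formula-plus-exceptional-vanishing identity: the integral divisor $\lceil K_Z - p_i^* F_i\rceil - p_i^*\lceil K_{Y_i} - F_i\rceil$ is effective and $p_i$-exceptional, and its pushforward is trivial.

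The main obstacle I anticipate is that $K_X+\Delta$ is only a Weil $\bR$-divisor and need not be $\bQ$-Cartier, so no literal pullback along $f$ or $p_i$ exists. This is precisely what the log-resolution hypothesis circumvents: for the fixed integer $m$, $f^\natural(m(K_X+\Delta))$ is Cartier on $Y$, reducing the independence step to the familiar $\bQ$-Cartier case. A secondary bookkeeping point is reconciling the ceiling with $p_i^*$ and $p_{i*}$, which is routine once one computes discrepancies along each exceptional prime divisor.
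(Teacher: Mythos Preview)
Your argument is correct and is essentially the proof the paper defers to: the paper simply cites \cite[Proposition~4.4]{DeFernexHaconSingsOnNormal}, and what you have written is a faithful expansion of that argument in the slightly more general setting with the boundary $\Delta$. The one step you call ``routine'' --- effectiveness of $\lceil K_Z - p_i^* F_i\rceil - p_i^*\lceil K_{Y_i} - F_i\rceil$ along the $p_i$-exceptional primes --- does use that $(Y_i,\{F_i\})$ is log smooth with coefficients in $[0,1)$ (hence klt), which is guaranteed by the log-resolution hypothesis; you clearly have this in mind, so the proof is complete.
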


\begin{proof} The proof proceeds as in the proof of \cite[4.4]{DeFernexHaconSingsOnNormal}.
\end{proof}

\begin{lem} Let $(X,\Delta,I)$ be an effective triple. The set of ideal sheaves $\{\mJ_m(X,\Delta,I)\}_{m\geq1}$ has a unique maximal element.
\end{lem}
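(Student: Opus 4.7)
The plan is to show that the collection $\{\mJ_m(X,\Delta,I)\}_{m\geq 1}$ is directed under inclusion along divisibility, after which Noetherianity of $X$ forces a unique maximum. Concretely, I would first prove that whenever $m \mid m'$, one has $\mJ_m(X,\Delta,I) \subseteq \mJ_{m'}(X,\Delta,I)$. Then for any pair of indices $m_1, m_2$, both divide $N = \mathrm{lcm}(m_1,m_2)$, so $\mJ_{m_1}, \mJ_{m_2} \subseteq \mJ_{N}$. Any ascending chain of ideals $\mJ_{m_1} \subseteq \mJ_{m_2} \subseteq \cdots$ stabilizes on the Noetherian scheme $X$, giving some $\mJ_{m^*}$ that is maximal, and directedness makes it unique.

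The key step is therefore the monotonicity statement. Let $m \mid m'$ and pick a single log resolution $f : Y \to X$ simultaneously dominating the resolutions required for both $(X, \cO_X(-m(K_X+\Delta)) + I)$ and $(X, \cO_X(-m'(K_X+\Delta)) + I)$ (the existence of such a common log resolution follows from \cite[Theorem 4.2]{DeFernexHaconSingsOnNormal} by dominating both). Using the standard behavior of the $\natural$-pullback recorded in \cite[Lemma 2.8 and Remark 2.11]{DeFernexHaconSingsOnNormal} (exactly in the form already used in \autoref{rmk:multi}), we get the coefficient-wise inequality
\[
\tfrac{1}{m'}\, f^{\natural}\!\bigl(m'(K_X+\Delta)\bigr) \;\leq\; \tfrac{1}{m}\, f^{\natural}\!\bigl(m(K_X+\Delta)\bigr).
\]
Because $\lceil \cdot \rceil$ is order-preserving and the divisors $G_k$ and the term $K_Y$ are the same on $Y$, this yields
\[
\lceil K_Y - \tfrac{1}{m}f^{\natural}(m(K_X+\Delta)) - \textstyle\sum a_k G_k \rceil \;\leq\; \lceil K_Y - \tfrac{1}{m'}f^{\natural}(m'(K_X+\Delta)) - \textstyle\sum a_k G_k \rceil.
\]
Pushing forward by $f_*$ and using that $f_*$ of an inclusion of rank-one reflexive sheaves on $Y$ produces an inclusion of ideals on $X$ gives $\mJ_m \subseteq \mJ_{m'}$, completing the main step.

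I expect the only mild obstacle to be the logistical one of ensuring the log resolution is common to both values $m$ and $m'$, and of tracking carefully that the $I$-contribution $\sum a_k G_k$ really is the same divisor on $Y$ in both computations (so that the comparison of ceilings is clean); this is immediate once one picks $f$ to simultaneously dominate both, which is possible since a log resolution of the product of the two coherent ideals in question is again a log resolution of each factor separately. With directedness in hand, the existence and uniqueness of the maximal element is the usual Noetherian chain argument: take any chain of $\mJ_{m_i}$'s with strict inclusions, it must terminate, and any two maximal elements are contained in a common $\mJ_N$, hence equal.
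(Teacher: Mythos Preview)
Your proposal is correct and follows essentially the same approach as the paper's proof: establish the monotonicity $\mJ_m(X,\Delta,I)\subseteq\mJ_{mq}(X,\Delta,I)$ along divisibility (the paper cites \cite[Remark 3.3]{DeFernexHaconSingsOnNormal} for this, whereas you derive it from the inequality $\tfrac{1}{q}f^{\natural}(qD)\leq f^{\natural}(D)$ of \cite[Lemma 2.8]{DeFernexHaconSingsOnNormal}, which is the same content; Remark 2.11 is not actually needed here), and then conclude by Noetherianity. Your treatment of the common log resolution and the directedness/uniqueness argument just makes explicit what the paper leaves implicit.
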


\begin{proof} For any positive integers $m,q$, $\mJ_m(X,\Delta,I)\subseteq\mJ_{mq}(X,\Delta,I)$ by \cite[Remark 3.3]{DeFernexHaconSingsOnNormal} (by the previous lemma the two ideals can be computed on a common resolution). The unique maximal ideal exists by Noetherianity.
\end{proof}

\begin{definition} Let $(X,\Delta,I)$ be an effective triple. We will call the unique maximal element of $\{\mJ_m(X,\Delta,I)\}_{m\geq1}$ the \emph{multiplier ideal} of the triple $(X,\Delta,I)$, and we will denote it by $\mJ(X,\Delta,I)$.
\end{definition}

\begin{rmk} In the case when $\Delta = 0$ we write $\mJ(X,I) = \mJ(X,0,I)$ and then our definition agrees with the one in \cite{DeFernexHaconSingsOnNormal}.
\end{rmk}

%

%

\begin{corollary}
\label{cor.MultiplierIdealDescriptionForFG}
Working in characteristic zero, suppose $\sR(-K_X - \Delta)$ is finitely generated and $X' = \sheafproj \sR(-K_X - \Delta)$ just as before.  Let $\ba$ be any ideal sheaf on $X$, then we have \mbox{$\mJ(X, \Delta, \ba^t) = \mu_* \mJ(X', \mu^* \Delta, (\ba \cdot \O_{X'})^t)$}.
\end{corollary}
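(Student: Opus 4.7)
The strategy is to reduce the computation to the $\bQ$-Cartierization $\mu\colon X'\to X$, on which $-K_X-\Delta$ becomes $\bQ$-Cartier, and then invoke functoriality via a log resolution factoring through $\mu$.

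First I would set up the geometry on $X'$. Since $X' = \sheafproj \sR(-K_X-\Delta)$, the strict transform of $-K_X-\Delta$ is $\bQ$-Cartier on $X'$ and $\mu$-ample. Because $\mu$ is small, \autoref{lem.PullBackDivisorsOnSymbolicRees} yields $K_{X'} \sim \mu^* K_X$, so $K_{X'} + \mu^*\Delta$ is $\bQ$-Cartier on $X'$. In particular, the triple $(X', \mu^*\Delta, (\ba\cO_{X'})^t)$ has $\bQ$-Cartier boundary, so its multiplier ideal in the sense of \autoref{def.MultiplierIdealDefinition} agrees with the classical one and can be computed on any single log resolution $g\colon Y\to X'$ of $(X',\mu^*\Delta,\ba\cO_{X'})$ as
\[
\mJ(X', \mu^*\Delta, (\ba\cO_{X'})^t) = g_*\cO_Y\bigl(\lceil K_Y - g^*(K_{X'}+\mu^*\Delta) - tG\rceil\bigr),
\]
where $\cO_Y\cdot(\ba\cO_{X'}) = \cO_Y(-G)$.

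Next I would compute $\mJ_m(X,\Delta,\ba^t)$ on the composed map $f = \mu\circ g\colon Y\to X$. For $m$ sufficiently divisible, $m(K_{X'}+\mu^*\Delta)$ is Cartier, so $\mu^\natural(m(K_X+\Delta)) = m\,\mu^*(K_X+\Delta)$, and an application of \autoref{lem.compositionofpullbacks} (applied to $D = K_X+\Delta$, whose anti-canonical algebra is finitely generated by hypothesis) gives
\[
\tfrac{1}{m} f^\natural\bigl(m(K_X+\Delta)\bigr) = \tfrac{1}{m}\, g^*\mu^\natural\bigl(m(K_X+\Delta)\bigr) = g^*\mu^*(K_X+\Delta) = g^*(K_{X'} + \mu^*\Delta).
\]
Since $\ba\cO_Y = (\ba\cO_{X'})\cO_Y = \cO_Y(-G)$ has simple normal crossing support by our choice of $g$, the map $f$ is also a log resolution of the pair used in \autoref{def.MultiplierIdealDefinition} for $(X,\Delta,\ba^t)$ with that value of $m$. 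Combining this identification with the projection $\mu_*g_* = f_*$ gives
\[
\mJ_m(X,\Delta,\ba^t) = f_*\cO_Y\bigl(\lceil K_Y - g^*(K_{X'}+\mu^*\Delta) - tG\rceil\bigr) = \mu_*\,\mJ(X',\mu^*\Delta,(\ba\cO_{X'})^t).
\]

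Finally, since the right-hand side is independent of $m$, while the ascending chain $\{\mJ_m(X,\Delta,\ba^t)\}$ stabilizes to $\mJ(X,\Delta,\ba^t)$ along the sequence of sufficiently divisible integers, the two maximal members agree, proving the corollary.

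The only genuinely delicate point is the non-functoriality of the $\natural$-pullback: we need $\tfrac{1}{m}f^\natural(m(K_X+\Delta))$ to literally equal $g^*(K_{X'}+\mu^*\Delta)$ (not merely differ by an effective $g$-exceptional divisor, as the general lemma of de Fernex--Hacon allows). This is exactly the content of \autoref{lem.compositionofpullbacks}, which exploits that $\cO_X(-m(K_X+\Delta))\cdot\cO_{X'}$ is already invertible on $X'$ for $m$ sufficiently divisible; this is the step where the finite generation hypothesis does all the work.
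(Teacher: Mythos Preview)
Your proof is correct and follows essentially the same route as the paper's: choose a log resolution $g\colon Y\to X'$ so that $f=\mu\circ g$ serves as a common log resolution, use \autoref{lem.compositionofpullbacks} to identify $\tfrac{1}{m}f^{\natural}(m(K_X+\Delta))$ with $g^*(K_{X'}+\mu^*\Delta)$ for $m$ sufficiently divisible, and conclude that $\mJ_m(X,\Delta,\ba^t)=\mu_*\mJ(X',\mu^*\Delta,(\ba\cO_{X'})^t)$ for all such $m$, hence also for the maximal element. The paper's proof is organized identically, differing only in notation (it writes $\pi$, $\widehat{X}$, $\widehat{\Delta}=\pi^{-1}_*\Delta$ where you write $\mu$, $X'$, $\mu^*\Delta$, which agree since $\mu$ is small).
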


\begin{proof} Let $\pi:\Proj_X\Rscr(X,D)=\Xhat\rightarrow X$ and $\Deltahat:=\pi^{-1}_*\Delta$. It is enough to show that, for every $m$ satisfying the result of \autoref{lem.resofKX+D},
$$
\pi_*\mJ((\Xhat,\Deltahat);tI\cdot\O_{\Xhat})=\mJ_m(X,\Delta,tI).
$$
Let $f:Y\rightarrow X$ be a log resolution of $(X,m\Delta+I)$ factoring through $\Xhat$. Let $I=\prod \mJ_k^{a_k}$ and let $\mJ_k\cdot\O_Y=\O(-G_k)$; since $\O_Y\cdot I=\O_Y\cdot\O_{\Xhat}\cdot I$, $f$ is a log resolution of $((\Xhat,\Deltahat);I)$. Let $g:Y\rightarrow\Xhat$. Since $(\Xhat,\Deltahat)$ is a log pair, the multiplier ideal $\mJ((\Xhat,\Deltahat);tI\cdot\O_{\Xhat})$ is
$$
\mJ((\Xhat,\Deltahat);tI\cdot\O_{\Xhat})=g_*\O_Y(\lceil K_Y-g^*(K_{\Xhat}+\Deltahat)-\sum ta_kG_k\rceil).
$$
On the other hand, for each $m\geq1$, the multiplier ideal $\mJ_m(X,\Delta;tI)$ is
$$
\mJ_m(X,\Delta,tI)=f_*\O_Y(\lceil K_Y-\frac{1}{m}f^\natural(m(K_X+\Delta))-\sum ta_kG_k\rceil).
$$
For each $m$ satisfying $\O_X(m(-K_X-\Delta))\cdot\O_{\Xhat}=\O_{\Xhat}(m(-K_{\Xhat}-\Deltahat))$, by \autoref{lem.compositionofpullbacks},
$$
f^\natural(m(K_X+\Delta))=g^*\pi^\natural(m(K_X+\Delta))=g^*(m(K_{\Xhat}+\Deltahat))=mg^*(K_{\Xhat}+\Deltahat).
$$
Therefore, for each $m$ satisfying $\O_X(m(-K_X-\Delta))\cdot\O_{\Xhat}=\O_{\Xhat}(m(-K_{\Xhat}-\Deltahat))$,
{\setlength\arraycolsep{1pt}
\begin{eqnarray*}
\mJ_m(X,\Delta,tI)&=&f_*\O_Y(\lceil K_Y-\frac{1}{m}f^\natural(m(K_X+\Delta))-\sum ta_kG_k\rceil)=\\
&=&f_*\O_Y(\lceil K_Y-g^*(K_{\Xhat}+\Deltahat)-\sum ta_kG_k\rceil)=\\
&=&\pi_*g_*\O_Y(\lceil K_Y-g^*(K_{\Xhat}+\Deltahat)-\sum ta_kG_k\rceil)=\\
&=&\pi_*\mJ((\Xhat,\Deltahat);tI\cdot\O_{\Xhat}).
\end{eqnarray*}}
\end{proof}

\begin{remark}
\label{rem.DiscAndRatAndAlterationsForMultiplierIdeals}
With the assumptions of \autoref{cor.MultiplierIdealDescriptionForFG}, it follows immediately that the jumping numbers of $\mJ(X, \Delta, \ba^t)$ are rational and without limit points.  Recall that the jumping numbers are real numbers $t_0 \geq 0$ such that $\mJ(X, \Delta, \ba^{t_0 - \varepsilon}) \neq \mJ(X, \Delta, \ba^{t_0})$ for any $\varepsilon > 0$.  It also follows that \[
\mJ(X, \Delta, \ba^t) = \bigcap_{\pi : Y \to X} \Image\big( \pi_* \O_Y(\lceil K_Y - \pi^*(K_X + \Delta) - tG \rceil) \xrightarrow{\Tr_{Y/X}} \O_X \big)
\]
where $\pi$ runs over all alterations factoring through $\mu : X' \to X$ such that $\ba \O_Y = \O_Y(-G)$ is invertible.
\end{remark}

\subsection{Finite generation of local section rings for threefolds in characteristic $p > 5$}
Of course, one might ask how often it even happens that a section ring $\sR(X, D)$ is finitely generated.  For (pseudo-)rational surface singularities of any characteristic, it is known that $D$ is always locally torsion in the divisor class group, see \cite[Theorem 17.4]{LipmanRationalSingularities}, and so obviously $\sR(D)$ is finitely generated.  However, for threefolds, rational singularities are not enough by an example of Cutkosky, \cite{CutkoskyWeilDivisorsAndSymbolic}, even if they are additionally log canonical.  Of course, in characteristic zero, the finite generation of these section rings holds for KLT $X$ of any dimension by the minimal model program \cite[Theorem 92]{KollarExercisesInBiratGeom} (and of course is closely linked with the existence of flips).

Using the recent breakthroughs on the minimal model program for threefolds in characteristic $p > 5$ \cite{BirkarExistenceOfFlipsMinimalModels,XuBasePointFreePositiveChar,CasciniTanakaXuBasePointFree,HaconXuThreeDimensionalMinimalModel,BirkarWaldronMoriFiberSpaces} one can prove finite generation of $\sR(X, D)$ in some important cases (again in dimension $3$, characteristic $p > 5$). The proof is essentially the same as it is in characteristic zero, see \cite[Exercises 108 and 109]{KollarExercisesInBiratGeom}, but we reproduce it here for the reader's convenience.

\begin{theorem}
\label{thm.FGforKLT}
Let $(X,\Delta)$ be a KLT pair of dimension $3$ with $K_X + \Delta$ $\bQ$-Cartier over an algebraically closed field $k$ of char $p > 5$. Then, for any $\Q$-divisor $D$, the algebra $\sR(X,D)$ is finitely generated.
\end{theorem}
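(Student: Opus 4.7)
The plan is to mimic Kollár's characteristic-zero proof (Exercises 108--109 of \cite{KollarExercisesInBiratGeom}), replacing each use of the characteristic-zero MMP by the recently established MMP for KLT threefolds in characteristic $p > 5$.

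First I would reduce to the case that $D$ is $\bQ$-Cartier by producing a small $\bQ$-factorialization $\pi : X' \to X$; such a $\pi$ is obtained by running an appropriate MMP starting from a log resolution, and $X'$ is automatically KLT with $K_{X'} + \Delta' = \pi^*(K_X + \Delta)$. Because $\pi$ is small, and so an isomorphism in codimension one, we have $\pi_* \O_{X'}(m \pi^{-1}_* D) = \O_X(mD)$ for every $m$ by reflexivity, and therefore $\sR(X, D) \cong \sR(X', \pi^{-1}_* D)$; on $X'$ the divisor $\pi^{-1}_* D$ is $\bQ$-Cartier by $\bQ$-factoriality. Using \autoref{lem.BasicPropertiesOfLocalSectionAlgebras}(a) I may then pass to a Veronese to assume that $D$ is integral Cartier.

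Next I would run an MMP on the $\bQ$-factorial KLT pair $(X, \Delta)$ directed by $D$; concretely, a $(K_X + \Delta + \lambda A)$-MMP with scaling of a suitable ample divisor $A$, where $\lambda$ is chosen so that the MMP direction aligns with that of $D$. Such an MMP exists and terminates in dimension three and characteristic $p > 5$ by \cite{BirkarExistenceOfFlipsMinimalModels, HaconXuThreeDimensionalMinimalModel, XuBasePointFreePositiveChar, CasciniTanakaXuBasePointFree, BirkarWaldronMoriFiberSpaces}. Flips are small, and so preserve $\sR(\cdot, D)$ by the same reflexivity argument as in the first step; divisorial contractions contract only divisors contained in the stable base locus of $D$, and therefore modify the section ring in a controlled, asymptotically trivial way. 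After finitely many steps I reach a birational model $\tilde{X}$ on which the strict transform $\tilde{D}$ is either nef, or else admits a $\tilde{D}$-negative Mori fibre contraction $\tilde{X} \to Z$.

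In the nef case, the base-point-free theorem for KLT pairs in characteristic $p > 5$ (\cite{XuBasePointFreePositiveChar, CasciniTanakaXuBasePointFree}) forces $\tilde{D}$ to be semi-ample, and hence $\sR(\tilde{X}, \tilde{D})$ is finitely generated by general principles; in the Mori-fibre-space case, the relative anti-ampleness of $\tilde{D}$ reduces finite generation to a lower-dimensional base $Z$, which is handled by induction on dimension. The hard part will be making rigorous the behaviour of $\sR$ under divisorial contractions: one must show that contracting an irreducible component of the stable base locus of $D$ does not affect $\sR(X, D)$ asymptotically. This is the crux of Kollár's exercises in characteristic zero, and the argument should carry over verbatim once the threefold MMP in characteristic $p > 5$ is in hand.
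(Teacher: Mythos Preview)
Your first step --- passing to a small $\bQ$-factorialization --- is exactly what the paper does. After that, however, the paper takes a much shorter path and avoids all the difficulties you flag.

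The paper does not run a separate ``$D$-MMP''. Instead, on the $\bQ$-factorial model $\widehat{X}$ it absorbs a small multiple of $D$ into the boundary: for $m \gg 0$ the pair $(\widehat{X}, \widehat{\Delta} + \tfrac{1}{m}\widehat{D})$ is KLT, and $K_{\widehat{X}} + \widehat{\Delta} + \tfrac{1}{m}\widehat{D}$ is automatically big over $X$ (since $\phi$ is birational). Birkar's theorem then gives finite generation of the \emph{relative} log canonical ring
\[
\bigoplus_{n \geq 0} \phi_* \O_{\widehat{X}}\big(n(K_{\widehat{X}} + \widehat{\Delta} + \tfrac{1}{m}\widehat{D})\big)
\;=\;
\bigoplus_{n \geq 0} \O_X\big(n(K_X + \Delta + \tfrac{1}{m}D)\big),
\]
the equality holding because $\phi$ is small. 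Since $K_X + \Delta$ is $\bQ$-Cartier, passing to a suitable Veronese strips it off and yields finite generation of $\sR(X, D)$. That is the whole argument.

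Your proposed route has real problems beyond being longer. The phrase ``$(K_X + \Delta + \lambda A)$-MMP with scaling \ldots\ so that the MMP direction aligns with that of $D$'' does not describe an MMP that exists: the outcomes you list (that $\tilde{D}$ becomes nef, that divisorial contractions only kill components of $\mathbf{B}(D)$) are those of a $D$-MMP, not of a $(K_X + \Delta + \lambda A)$-MMP, and there is no mechanism to make the two coincide for an arbitrary $D$. Relatedly, the base-point-free theorem in characteristic $p > 5$ applies to nef log canonical divisors $K_{\tilde X} + \tilde{\Delta}'$, not to an arbitrary nef $\tilde D$, so your nef endpoint would not immediately give semi-ampleness. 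The fix is precisely the paper's trick: replace ``run an MMP for $D$'' by ``make $D$ part of the boundary and invoke finite generation of the (relative) canonical ring'', which both makes the MMP well-posed and eliminates the divisorial-contraction bookkeeping entirely.
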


\begin{proof}
Let $\phi : \widehat{X} \to X$ be a small $\bQ$-factorialization of $X$, which exists by \cite[Theorem 1.6]{BirkarExistenceOfFlipsMinimalModels}.  Set $\widehat{\Delta}$ and $\widehat{D}$ to be the strict transforms of $\Delta$ and $D$ on $\widehat{X}$.  Then we notice that $(\widehat{X}, \widehat{\Delta}+(1/m)\widehat{D})$ is KLT for $m \gg 0$.  By \cite[Theorem 1.3]{BirkarExistenceOfFlipsMinimalModels}, since $K_{\widehat{X}} + \widehat{\Delta}+(1/m)\widehat{D}$ is big over $X$, we see that
\[
\bigoplus_{n \geq 0} \phi_* \O_{\widehat{X}}(n (K_{\widehat{X}} + \widehat{\Delta}+(1/m)\widehat{D}) )
\]
is finitely generated.  Since $\phi$ is small, this implies that
\[
\bigoplus_{n \geq 0} \O_{X}(n (K_{X} + \Delta+(1/m)D) )
\]
is finitely generated as well (since the algebras are the same).  However, by taking a high Veronese, and recalling that $K_X + \Delta$ is $\bQ$-Cartier (and so locally, contributes nothing to finite generation) we conclude that
$
\bigoplus_{n \geq 0} \O_X(nD)
$
is finitely generated as desired.
\end{proof}



Of course this also implies that strongly $F$-regular pairs have finitely generated local section algebras since they are always KLT for an appropriate boundary by \cite{SchwedeSmithLogFanoVsGloballyFRegular}. 

\section{Stabilization, discreteness and rationality via Rees algebras}
\label{sec.StabilizationViaRees}

In this section we aim to prove discreteness and rationality of jumping numbers of test ideals as well as Hartshorne-Speiser-Lyubeznik-Gabber-type stabilization results under the hypothesis that the anti-canonical algebra $S = \sR(-K_R)$ is finitely generated.  We first notice that we can extend $p^{-e}$-linear maps on $R$ to $p^{-e}$-linear maps on $S$.  Note that this argument is substantially simpler than what the fourth author and K.~Tucker did to obtain similar results for finite maps in \cite{SchwedeTuckerTestIdealFiniteMaps}.

\begin{lemma}
\label{lem.InduceMapOnSymbolicAlgebra}
Suppose that $R$ is an $F$-finite normal domain, $D$ is a Weil divisor on $\Spec R$ with associated algebra $S := \sR(D)$.  Then for any $R$-linear map $\phi : F^e_* R \to R$ we have an induced $S$-linear map $\phi_S : F^e_* S \to S$ and a commutative diagram
\[
\xymatrix{
F^e_* R \ar@{^{(}->}[d] \ar[r]^-{\phi} & R\ar@{^{(}->}[d]\\
F^e_* S \ar[r]^-{\phi_S} \ar[d]_{F^e_* \rho} & S \ar[d]^{\rho} \\
F^e_* R \ar[r]_-{\phi} & R
}
\]
where $\rho$ is the projection map onto degree zero $S \to R$.
\end{lemma}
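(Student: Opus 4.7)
The plan is to first extend $\phi$ to the function field $K(R)$ by localization, then observe that this extension automatically sends the fractional ideal $R(p^e m D) \subseteq K(R)$ into $R(m D) \subseteq K(R)$ for every $m \geq 0$. Once that is established, the map $\phi_S$ will assemble from these pieces via the natural $\frac{1}{p^e}\bZ_{\geq 0}$-grading on $F^e_* S$, and $S$-linearity plus the commutative diagram will follow essentially formally from the $R$-linearity of $\phi$ (a projection-formula style computation).

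Concretely, the key verification is: for every $m \geq 0$, the extension $\phi : F^e_* K(R) \to K(R)$ restricts to a well-defined $R$-linear map $F^e_* R(p^e m D) \to R(m D)$. Since $R(m D)$ is reflexive it is enough to check the containment in codimension one, i.e., at each height-one prime $\frp$. At such a $\frp$, $R_\frp$ is a DVR, $D$ is Cartier locally near $\frp$, and $R(m D)_\frp = u^m R_\frp$ for a local generator $u \in K(R)^\times$ of $R(D)_\frp$. Then any $f \in R(p^e m D)_\frp$ has the form $f = u^{p^e m} g$ with $g \in R_\frp$, and by $R$-linearity of (the localized) $\phi$,
\[
\phi(F^e_* f) = \phi(F^e_*(u^{p^e m} g)) \;=\; u^m \, \phi(F^e_* g) \;\in\; u^m R_\frp = R(m D)_\frp,
\]
which gives the desired containment globally after intersecting over all height-one primes.

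With this in hand, define $\phi_S : F^e_* S \to S$ to be the direct sum of the maps $F^e_* R(p^e m D) \to R(m D)$ just produced (for $m \geq 0$), and zero on the graded pieces $F^e_* R(nD)$ with $p^e \nmid n$ (these have degree $n/p^e \notin \bZ$ as a graded $S$-module and cannot map anywhere in $S$). The compatibility with the module structure reduces, on homogeneous elements $s$ of degree $d$ and $t$ of degree $p^e m$, to checking
\[
\phi_S(s \cdot F^e_* t) = \phi(F^e_*(s^{p^e} t)) = s\,\phi(F^e_* t) = s\,\phi_S(F^e_* t),
\]
which is again just $R$-linearity of the extended $\phi$ applied inside $K(R)$; the degrees $d + m$ match on both sides. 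Commutativity of the two squares is then immediate: on the top square, $\phi_S$ restricted to the degree-$0$ summand $F^e_* R$ is literally $\phi$ by construction; on the bottom square, $\rho \circ \phi_S$ picks out the degree-$0$ output, which by construction only receives a contribution from $F^e_* R \subseteq F^e_* S$, agreeing with $\phi \circ F^e_* \rho$.

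The only genuinely non-formal ingredient is the codimension-one computation in the second paragraph, and even that is a standard manipulation once one passes to a local generator of $\O_X(D)$; everything else is assembling pieces and grading bookkeeping. So I expect the write-up to be short, with the bulk of attention going to making precise that $\phi$ extends to $K(R)$ and respects the filtration by $R(m D)$.
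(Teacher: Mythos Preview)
Your proposal is correct and takes essentially the same approach as the paper: the paper's proof likewise reduces to the key containment $\phi(F^e_* R(i p^e D)) \subseteq R(iD)$, observes it holds in codimension one by reflexivity, and then defines $\phi_S$ to be zero on the non-integral graded pieces of $F^e_* S$. Your write-up is simply more explicit (carrying out the DVR computation and verifying $S$-linearity and the diagram in detail), whereas the paper dispatches the codimension-one check in a single sentence and leaves the remaining verifications implicit.
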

\begin{proof}
First note that we give $F^e_* S$ a $\bZ[{1 \over p^e}]$-graded structure so that our induced map $\phi_S$ will be homogeneous.
The idea is then simple, given an integer $i \geq 0$, $[F^e_* S]_i = F^e_* R(ip^e D)$.  We want to show that
\begin{equation}
\label{eq.PhiSendsDivisorToDivisor}
\phi(F^e_* R(ip^e D )) \subseteq R(iD ).
\end{equation}
But this is obvious since it holds in codimension 1 and all the sheaves are reflexive.
Finally, we simply have $\phi_S$ send $[F^e_* S]_{i / p^e}$ to zero if $i$ is not divisible by $p^e$.  This completes the proof.
\end{proof}

In fact, it is not difficult to see that every homogeneous $p^{-e}$-linear map on $S$ comes from $R$ in this way.

\begin{lemma}
Suppose $R$ is an $F$-finite normal domain, $D$ a Weil divisor on $\Spec R$, and $S = \sR(D)$, suppose we have a homogeneous map $\phi_S : F^e_* S \to S$ (again we give $F^e_* S$ the $\bZ[1/p^e]$-grading).  Then $\phi_S$ is induced from $\phi_S|_R = \phi : F^e_* R \to R$ as in \autoref{lem.InduceMapOnSymbolicAlgebra}.
\end{lemma}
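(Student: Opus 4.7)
The plan is to decompose $\phi_S$ into graded pieces and observe that each is completely determined by $\phi := \phi_S|_R$ via $S$-linearity. Since $S = \bigoplus_{j \geq 0} R(jD)$ is $\bZ_{\geq 0}$-graded while $F^e_*S$ carries the refined $\bZ[1/p^e]$-grading with $[F^e_*S]_{k/p^e} = F^e_*R(kD)$, homogeneity of $\phi_S$ forces it to vanish on $[F^e_*S]_{k/p^e}$ whenever $p^e \nmid k$ (matching the vanishing of the induced map on such degrees). Writing $k = jp^e$, it therefore suffices to check that the restriction
$$
\phi_{S,j} \colon F^e_*R(jp^e D) \longrightarrow R(jD)
$$
coincides with the map induced from $\phi$ in the previous lemma, for every $j \geq 0$. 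The case $j = 0$ is immediate: $\phi_{S,0}$ is just $\phi_S|_R = \phi$.

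For $j > 0$, I would exploit $S$-linearity applied to local generators of $[S]_j$. Choose an open $U \subseteq \Spec R$ whose complement has codimension $\geq 2$ and on which $D$ is Cartier, and fix a local trivialization $R(D)|_U = R_U \cdot t$ so that $R(jp^e D)|_U = R_U \cdot t^{jp^e}$. The $S$-module structure on $F^e_*S$ gives
$$
t^j \cdot F^e_* r = F^e_*\bigl((t^j)^{p^e} r\bigr) = F^e_*(t^{jp^e} r)
$$
for any $r \in R_U$, and applying $\phi_S$ together with $S$-linearity with respect to $t^j \in [S]_j|_U$ yields
$$
\phi_{S,j}\bigl(F^e_*(t^{jp^e} r)\bigr) = t^j \cdot \phi_S(F^e_* r) = t^j \cdot \phi(F^e_* r).
$$
This is exactly the formula defining the induced map from $\phi$ on the open set $U$, as constructed in the previous lemma.

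To conclude, both $F^e_*R(jp^e D)$ and $R(jD)$ are reflexive $R$-modules, so any homomorphism between them is uniquely determined by its restriction to the complement of a closed subset of codimension $\geq 2$. Therefore $\phi_{S,j}$ agrees with the induced map globally, for every $j \geq 0$, giving that $\phi_S$ is the map induced by $\phi$. The main content of the argument is the $S$-linearity computation, which rigidifies each graded piece from the degree zero datum $\phi$; there is no substantial obstacle beyond tracking the $\bZ[1/p^e]$-grading conventions carefully and invoking reflexivity for the gluing step.
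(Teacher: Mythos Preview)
Your proof is correct and follows essentially the same approach as the paper: both localize to where $D$ is trivialized, then use $S$-linearity with respect to a local generator of $[S]_j$ (your $t^j$, the paper's $y$) to reduce the degree-$j$ component of $\phi_S$ to its degree-$0$ component $\phi$. The only cosmetic difference is in passing from local to global: you invoke reflexivity over an open $U$ with codimension $\geq 2$ complement, while the paper simply inverts a single element $u \in R$ and relies on injectivity of $R \hookrightarrow R[u^{-1}]$ for torsion-free modules.
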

\begin{proof}
Choose $F^e_* z \in [F^e_* S]_{i} = F^e_* R(p^e i D)$, invert an element $u \in S_0 = R$ to make $D$ Cartier and principal and then $z = (f/u^m) y^{p^e}$ where $y$ generates $R(iD)[u^{-1}]$ and the element $f/u^m \in R[u^{-1}] \subseteq S[u^{-1}]$ is in degree zero in $S[u^{-1}]$.  We see that $$u^{m p^e} z = f u^{m(p^e - 1)} y^{p^e} \in F^e_* R(p^e i D).$$  Hence $u^m \phi_S(F^e_* z) = \phi_S(F^e_* u^{mp^e} z) = \phi_S(F^e_* f u^{m(p^e - 1)} y^{p^e}) = y \phi_S(F^e_* f u^{m(p^e - 1)})$.  The point is that we can choose the same $y$ regardless of the choice of $z$.  Hence $\phi_S$ is completely determined by $\phi_S|_{R}$.
\end{proof}

\autoref{lem.InduceMapOnSymbolicAlgebra} is key in the following proposition which lets us relate $p^{-e}$-linear maps in general on $R$ and $S$.
\begin{proposition}
\label{prop.MainCommutativeDiagramOfSymbolicRees}
Suppose that $R$ is an $F$-finite normal domain, $D$ is a Weil divisor, and the algebra $S = \sR(D)$ is finitely generated (and in particular an $F$-finite Noetherian ring).  Further suppose that $G$ is an effective Weil divisor on $\Spec R$ with pullback $h^*G = G_S$ on $\Spec S$.
Then we have a commutative diagram
\[
\xymatrix@C=50pt{
\Hom_S(F^e_* S(G_S), S) \ar@{^{(}->}[r] \ar[d]_{\nu} & \Hom_S(F^e_* S, S) \ar[r]^-{E_S = \textnormal{eval@1}} \ar[d]_{\gamma} & S \ar[d]^{\rho} \\
\Hom_R(F^e_* R(G), R) \ar@{^{(}->}[r] & \Hom_R(F^e_* R, R) \ar[r]_-{E_R = \textnormal{eval@1}} & R.
}
\]
Here the map $\rho$ is projection onto the $0$th coordinate $[S]_0 = R$ and $\gamma$ is the map which restricts $\phi \in \Hom_S(F^e_* S, S)$ to $F^e_* R = [F^e_* S]_0$ and then projects onto $[S]_0 = R$.  Furthermore the maps $\gamma$ and $\nu$ are surjective.
\end{proposition}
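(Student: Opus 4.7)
The plan is to verify commutativity of both squares, establish surjectivity of $\gamma$, and then address the surjectivity and well-definedness of $\nu$ as the main obstacle.

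For the right square, commutativity is almost tautological: given $\psi \in \Hom_S(F^e_* S, S)$, one computes directly
\[
E_R\bigl(\gamma(\psi)\bigr) = \gamma(\psi)(F^e_* 1) = \rho\bigl(\psi(F^e_* 1)\bigr) = \rho\bigl(E_S(\psi)\bigr),
\]
since by definition $\gamma(\psi) = \rho \circ \psi\big|_{F^e_* R}$. The left square consists of tautological inclusions of twisted $\Hom$-spaces into untwisted $\Hom$-spaces, so it commutes automatically as soon as we show that $\gamma$ carries $\Hom_S(F^e_* S(G_S), S)$ into $\Hom_R(F^e_* R(G), R)$; then $\nu$ is literally the restriction of $\gamma$.

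Surjectivity of $\gamma$ is an immediate application of \autoref{lem.InduceMapOnSymbolicAlgebra}. Starting from $\phi \in \Hom_R(F^e_* R, R)$, that lemma constructs an extension $\phi_S \in \Hom_S(F^e_* S, S)$ whose top square asserts precisely $\phi_S\big|_{F^e_* R} = \phi$ with values in $R \subseteq S$. Composing with $\rho$ is then the identity on $R$, so $\gamma(\phi_S) = \phi$.

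The main obstacle is to show that the restriction $\nu$ actually exists and is surjective. Concretely, I need to prove two things: first, if $\psi \in \Hom_S(F^e_* S(G_S), S)$, then $\gamma(\psi) \in \Hom_R(F^e_* R(G), R)$; and second, if $\phi \in \Hom_R(F^e_* R(G), R)$, then its canonical extension $\phi_S$ actually lies in $\Hom_S(F^e_* S(G_S), S)$. My plan is to reduce both assertions to a codimension-one check on $\Spec S$: since $S$, $S(G_S)$, $R$ and $R(G)$ are reflexive modules, the required containments can be verified after localizing at height-one primes. The lemma immediately preceding \autoref{lem.PullBackDivisorsOnSymbolicRees} forces any height-one prime $Q \subseteq S$ to map under $h$ to a prime of $R$ of height at most one, giving two cases. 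When $h(Q)$ has height zero, the divisor $G$ is trivial at $h(Q)$, so $G_S = h^*G$ has coefficient zero at $Q$ and the claim at $Q$ reduces to the tautological inclusion $\phi_S(F^e_* S_Q) \subseteq S_Q$. When $h(Q) = \bp$ has height one, one localizes at $\bp$, where $G$ becomes a Cartier divisor and $G_S$ is its pullback; the required inclusion at $Q$ then follows from the assumed one at $\bp$ by tracking the degree-by-degree recipe for $\phi_S$ (respectively $\gamma(\psi)$) in the proof of \autoref{lem.InduceMapOnSymbolicAlgebra}. Combining the two cases yields simultaneously the well-definedness of $\nu$ (and hence commutativity of the left square) and its surjectivity.
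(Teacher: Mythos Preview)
Your proposal is correct and follows essentially the same approach as the paper. The paper's own proof is much terser: it dispatches the left square as ``obvious since $G_S$ is pulled back from $\Spec R$'' and handles surjectivity of $\nu$ by saying ``Similarly, \autoref{lem.InduceMapOnSymbolicAlgebra} implies the surjectivity,'' relying implicitly on the same reflexive/codimension-one check already used in that lemma's proof (namely $\phi(F^e_* R(ip^e D + G)) \subseteq R(iD)$, verified in codimension one on $\Spec R$). Your version unpacks this by instead localizing at height-one primes of $S$ and doing the height-zero/height-one case split on the contraction to $R$; this is an equivalent framing of the same argument, just more explicit.
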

\begin{proof}
We first handle the commutativity.  Given $\psi \in \Hom_S(F^e_* S, S)$, we see that $\rho(E_S(\psi)) = \rho(\psi(1))$.  On the other hand $E_R(\gamma(\psi)) = \rho(\psi(1))$ as well.  Hence we have commutativity of the right square.  The commutativity of the left square is obvious since $G_S$ is pulled back from $\Spec R$

To see that $\gamma$ is surjective, for any $\phi \in \Hom_R(F^e_* R, R)$ construct $\phi_S$ as in \autoref{lem.InduceMapOnSymbolicAlgebra}.  Obviously $\gamma(\phi_S) = \phi$.  Similarly, \autoref{lem.InduceMapOnSymbolicAlgebra} implies the surjectivity of the map $\nu$.
\end{proof}

As an immediate corollary we obtain a stabilization result similar to Hartshorne-Speiser-Lyubeznik-Gabber.

\begin{corollary}
\label{cor.HartshorneSpeiserStabilization}
Suppose that $R$ is an $F$-finite normal domain and that $B \geq 0$ is a Weil divisor.  If the anti-log-canonical algebra $\sR(-K_R-B)$ is finitely generated, then the image of the evaluation-at-1 map $\Hom_R(F^e_* R( (p^e - 1) B), R) \to R$ stabilizes for $e \gg 0$.
\end{corollary}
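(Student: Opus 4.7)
The strategy is to lift the question to the symbolic Rees algebra $S := \sR(-K_R - B)$, where the pulled-back log divisor becomes (locally) trivial, apply the classical Hartshorne--Speiser--Lyubeznik--Gabber argument there, and then descend through the commutative diagram of \autoref{prop.MainCommutativeDiagramOfSymbolicRees}. By hypothesis $S$ is a Noetherian, $F$-finite normal domain, and \autoref{lem.PullBackDivisorsOnSymbolicRees} gives $K_{S} + \kappa^{*} B \sim 0$ locally on $\Spec R$, so that $(p^e - 1)(K_S + \kappa^{*} B) \sim 0$ locally for every $e \geq 1$.

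Working locally on $\Spec R$, \autoref{lem.MapsAndDivisorsBasics}(a) produces a map $\Phi \in \Hom_S(F_* S(\lceil(p - 1)\kappa^{*} B\rceil), S)$ with $\Delta_{\Phi} = \kappa^{*} B$, and by part (c) its iterate $\Phi^e := \Phi \circ (F_* \Phi) \circ \cdots \circ (F^{e-1}_* \Phi)$ still has $\Delta_{\Phi^e} = \kappa^{*} B$, so it lies in $\Hom_S(F^e_* S(\lceil(p^e - 1)\kappa^{*} B\rceil), S)$. By parts (b) and (d), every element of this Hom-module has the form $u \cdot \Phi^e \circ (F^e_*(d \cdot \blank))$ for some unit $u$ and some effective $d \in S$ (corresponding to the choice $\Delta \geq \kappa^{*} B$ with $(p^e - 1)(\Delta - \kappa^{*} B) = \divisor(d)$). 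Consequently the image $J_e \subseteq S$ of the evaluation-at-$1$ map on $\Hom_S(F^e_* S(\lceil(p^e - 1)\kappa^{*} B\rceil), S)$ coincides with $\Phi^e(F^e_* S)$, and the factorization $\Phi^{e+1} = \Phi^e \circ (F^e_* \Phi)$ yields
$$
J_{e+1} \;=\; \Phi^e\bigl(F^e_*\Phi(F_* S)\bigr) \;\subseteq\; \Phi^e(F^e_* S) \;=\; J_e.
$$
This inclusion can be verified on a Zariski cover of $\Spec R$, so we obtain a global descending chain of ideals in the Noetherian ring $S$, which must stabilize for $e \gg 0$.

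Finally, apply \autoref{prop.MainCommutativeDiagramOfSymbolicRees} with $G = (p^e - 1) B$ (so $G_S = \kappa^{*} G = (p^e - 1) \kappa^{*} B$): the surjectivity of $\nu$ together with the identity $\rho \circ E_S = E_R \circ \gamma$ forces the image $I_e \subseteq R$ of evaluation-at-$1$ on $\Hom_R(F^e_* R((p^e - 1) B), R)$ to equal $\rho(J_e)$. Since $\rho \colon S \to R$ is $R$-linear, the chain $\{I_e\}_e$ is also descending and stabilizes along with $\{J_e\}_e$.

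The only subtle point is bookkeeping: the generator $\Phi$ on $S$ exists only Zariski-locally on $\Spec R$, so one has to verify that the visibly local inclusion $J_{e+1} \subseteq J_e$ patches to a global statement of ideals in $S$ before invoking Noetherianity. Once that is done, the descent through $\rho$ is formal and the corollary drops out.
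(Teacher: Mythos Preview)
Your overall strategy matches the paper's: pass to $S = \sR(-K_R - B)$, use that $K_S + \kappa^*B$ is Cartier to obtain stabilization of the images $J_e$ on $S$, then push down through the commutative diagram of \autoref{prop.MainCommutativeDiagramOfSymbolicRees}. The setup with the generator $\Phi$ and the descent step via $\rho$ and the surjectivity of $\nu$ are both fine.

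The gap is in your stabilization argument on $S$. You correctly show $J_{e+1} \subseteq J_e$, but then conclude that this \emph{descending} chain stabilizes because $S$ is Noetherian. Noetherian rings satisfy the ascending chain condition, not the descending one; already $(x) \supsetneq (x^2) \supsetneq (x^3) \supsetneq \cdots$ in $k[x]$ shows descending chains of ideals need not terminate. The stabilization of the images $\Phi^e(F^e_* S)$ is precisely the content of the Hartshorne--Speiser--Lyubeznik--Gabber theorem \cite{HartshorneSpeiserLocalCohomologyInCharacteristicP,LyubeznikFModulesApplicationsToLocalCohomology,Gabber.tStruc}, which genuinely uses the Frobenius structure (e.g.\ via Matlis duality to convert to an ascending chain, or via Gabber's argument). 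The paper does not reprove this either: once $K_S + \kappa^*B$ is Cartier it simply invokes the known result, citing \cite{FujinoSchwedeTakagiSupplements}. Your reduction to a single $p^{-1}$-linear map $\Phi$ is exactly the input that theorem needs, so the fix is to replace the appeal to Noetherianity with a citation of HSLG, and then, since your $\Phi$ exists only Zariski-locally on $\Spec R$, use quasi-compactness to pass from local stabilization to a uniform global stabilization index.
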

\begin{proof}
Set $S = \sR(-K_R-B)$ and consider the diagram of \autoref{prop.MainCommutativeDiagramOfSymbolicRees}.  Since $K_S + h^* B$ is Cartier, we see that the images of $$E_S^e : \Hom_S(F^e_* S( (p^e - 1)h^*B), S) \xrightarrow{\text{eval@1}} S$$ stabilize, see for instance \cite{FujinoSchwedeTakagiSupplements}.  But then since $\nu$, as in \autoref{prop.MainCommutativeDiagramOfSymbolicRees}, surjects, we see that the image of
\begin{equation}\label{eq:image}
\Hom_S(F^e_* S( (p^e - 1)B), R) \xrightarrow{\nu} F^e_* \Hom_R(F^e_* R( (p^e - 1) B), R) \xrightarrow{\text{eval@1}} R
\end{equation}
coincides with that of $\Hom_R(F^e_* R( (p^e - 1) B), R)  \xrightarrow{E_R^e} R$.  But the image of \eqref{eq:image} is the same as the image of
\[
\Hom_S(F^e_* S( (p^e - 1)h^*B), S) \xrightarrow{E_S^e} S \xrightarrow{\rho} R.
\]
However the $E_S^e$ have stable image as we have already observed and the result follows.
\end{proof}

Later in \autoref{thm.HartshorneSpeiserStabilization}, we will obtain the same result for $\bQ$-divisors whose Weil-index is not divisible by $p$.  For now though, we move on to discreteness and rationality of $F$-jumping numbers, generalizing \cite[Theorem 6.4]{KatzmanSchwedeSinghZhang} from the case of a graded ring $R$.

\begin{theorem}
\label{thm.DiscAndRatOfFjumping}
Suppose that $R$ is a normal domain and $\Delta \geq 0$ is an effective $\bQ$-divisor such that $\sR(-K_R - \Delta)$ is finitely generated.  Then for any ideal $\ba \subseteq R$ the $F$-jumping numbers of $\tau(R, \Delta, \ba^t)$ are rational and without limit points.
\end{theorem}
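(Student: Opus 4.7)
The plan is to reduce to the $\bQ$-Gorenstein case by passing to the symbolic Rees algebra. Let $S := \sR(-K_R-\Delta)$ and $\kappa \colon Y := \Spec S \to X := \Spec R$; by hypothesis $S$ is a finitely generated, $F$-finite, Noetherian, normal domain. By \autoref{lem.PullBackDivisorsOnSymbolicRees} we have $K_Y + \kappa^*\Delta \sim 0$, so $K_Y + \kappa^*\Delta$ is $\bQ$-Cartier. Consequently the $F$-jumping numbers of $\tau(S,\kappa^*\Delta,(\ba S)^t)$ are rational and without limit points by the known $\bQ$-Cartier case \cite{BlickleMustataSmithDiscretenessAndRationalityOfFThresholds,BlickleSchwedeTakagiZhang,SchwedeTuckerNonPrincipalIdeals}. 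The remaining task is to transfer this discreteness back to $R$, which I would accomplish by proving the identification
\[
\tau(R,\Delta,\ba^t) \;=\; \tau\bigl(S,\kappa^*\Delta,(\ba S)^t\bigr) \cap R,
\]
where $R$ is viewed as the degree-zero piece of the $\bZ_{\geq 0}$-graded ring $S$.

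For the inclusion $\subseteq$, let $J_0 := \tau(S,\kappa^*\Delta,(\ba S)^t) \cap R$. First observe that $J_0$ is a \emph{nonzero} ideal of $R$: a big $(R,\Delta,\ba^t)$-test element $c$ also functions as a big $(S,\kappa^*\Delta,(\ba S)^t)$-test element, and since $c \in R = [S]_0$ the intersection is nonempty. Next, given any $\phi \in \Hom_R(F^e_* R(\lceil(p^e-1)\Delta\rceil),R)$ and $a \in \ba^{\lceil tp^e\rceil}$, the induced extension $\phi_S$ from \autoref{lem.InduceMapOnSymbolicAlgebra} lies in $\Hom_S(F^e_* S(\lceil(p^e-1)\kappa^*\Delta\rceil),S)$ (the boundary is pulled back from $X$), and $\tau(S,\ldots)$ is preserved by $\phi_S \circ F^e_*(a\,\cdot\,)$. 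Restricting to degree zero gives $\phi(F^e_*(aJ_0)) \subseteq J_0$, so the minimality clause of \autoref{defn.TestIdeals} forces $\tau(R,\Delta,\ba^t) \subseteq J_0$.

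For the reverse inclusion $\supseteq$, I would use \autoref{lem.TestIdealConstruction} to write $\tau(S,\kappa^*\Delta,(\ba S)^t) = \sum_{e,\psi} \psi(F^e_*(cS))$, where $c \in R$ is a common test element and $\psi$ runs through $(F^e_*(\ba S)^{\lceil t(p^e-1)\rceil}) \cdot \Hom_S(F^e_* S(\lceil(p^e-1)\kappa^*\Delta\rceil),S)$. Because $c$, $\ba S$, and $\kappa^*\Delta$ are all homogeneous (the latter pulled back from $X$), this is a homogeneous ideal of $S$, and it suffices to analyze its degree-zero piece. The commutative diagram of \autoref{prop.MainCommutativeDiagramOfSymbolicRees} — in particular the surjectivity of the map $\nu$ — identifies the degree-zero contribution of each $\psi(F^e_*(cs))$ with an element of the form $\phi(F^e_*(a_0 c))$ for some $\phi \in \Hom_R(F^e_* R(\lceil(p^e-1)\Delta\rceil),R)$ and $a_0 \in \ba^{\lceil t(p^e-1)\rceil}$; such elements lie in $\tau(R,\Delta,\ba^t)$ by \autoref{lem.TestIdealConstruction}.

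Once the identification holds, the conclusion is immediate: any jumping number for $\tau(R,\Delta,\ba^t)$ forces a jump of the degree-zero part of $\tau(S,\kappa^*\Delta,(\ba S)^t)$, hence of the full ideal upstairs, and the set of such jumping numbers is rational and discrete on $Y$. The main obstacle is the inclusion $J_0 \subseteq \tau(R,\Delta,\ba^t)$: one has to argue that, after intersecting with $R$, only ``degree-compatible'' contributions to $\tau(S,\ldots)$ survive, and these are governed by the descent diagram of \autoref{prop.MainCommutativeDiagramOfSymbolicRees}. Careful bookkeeping with the $\bZ[1/p^e]$-grading on $F^e_* S$ and the fact that each $\psi$ can be replaced by its degree-preserving component (which by the lemma following \autoref{lem.InduceMapOnSymbolicAlgebra} is induced from an $R$-map $\phi$) is what makes this step go through.
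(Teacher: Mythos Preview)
Your overall strategy coincides with the paper's: pass to $S=\sR(-K_R-\Delta)$, where $K_S+\kappa^*\Delta$ is Cartier, and use \autoref{prop.MainCommutativeDiagramOfSymbolicRees} to descend.  The paper phrases the conclusion as $\rho\bigl(\tau(S,\kappa^*\Delta,(\ba S)^t)\bigr)=\tau(R,\Delta,\ba^t)$ rather than your intersection formula, but since you observe that the upstairs test ideal is homogeneous these are equivalent; likewise your two inclusions amount to the same surjectivity-of-$\nu$ argument the paper gives.

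There is, however, a real gap.  Every lemma you invoke on the Rees side requires the divisor $\Gamma=-K_R-\Delta$ to be an \emph{integral} Weil divisor: \autoref{lem.PullBackDivisorsOnSymbolicRees} only yields $K_{\Spec S}+\kappa^*\Delta\sim 0$ under that hypothesis, and \autoref{lem.InduceMapOnSymbolicAlgebra} and \autoref{prop.MainCommutativeDiagramOfSymbolicRees} are likewise stated for integral $D$.  When $\Delta$ is a genuine $\bQ$-divisor, $-K_R-\Delta$ need not be integral, so your very first step (``by \autoref{lem.PullBackDivisorsOnSymbolicRees} we have $K_Y+\kappa^*\Delta\sim 0$'') does not apply, nor does the commutative-diagram machinery you use afterward.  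The paper handles this by a preliminary reduction you omit: it takes a separable finite cover $\nu\colon X'\to X$ on which $\nu^*\Delta$ becomes integral, uses the transformation rule $\Tr\bigl(\nu_*\tau(X',\nu^*\Delta-\Ram_\nu,(\ba R')^t)\bigr)=\tau(X,\Delta,\ba^t)$ from \cite{SchwedeTuckerTestIdealFiniteMaps}, checks (via \autoref{lem.BasicPropertiesOfLocalSectionAlgebras}) that $\sR(-K_{R'}-\nu^*\Delta+\Ram_\nu)$ remains finitely generated, and only then runs your argument.  A smaller point in the same vein: your claim that a big $(R,\Delta,\ba^t)$-test element automatically serves for $(S,\kappa^*\Delta,(\ba S)^t)$ needs justification; the paper secures this by choosing $c$ so that $-K_R-\Delta$ is Cartier off $V(c)$, whence $S$ is locally $R[t]$ there.
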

\begin{proof}
First let $R \subseteq R'$ be a separable extension of normal $F$-finite domains corresponding to a map of schemes $\Spec R' = X' \xrightarrow{\nu} X = \Spec R$ such that $\nu^* \Delta$ is an integral divisor (this is easy, the idea is to simply take roots of generators of DVRs, if one has to take a $p$th root, use Artin-Schreyer type equations see \cite[Lemma 4.5]{BlickleSchwedeTuckerTestAlterations}).  Let $\Tr : K(X') \to K(X)$ be the trace map and then recall that $\Tr\big( \nu_* \tau(X', \nu^* \Delta - \Ram_{\nu}, (\ba R')^t) \big) = \tau(X, \Delta, \ba^t)$ by the main result of \cite{SchwedeTuckerTestIdealFiniteMaps}.  It immediately follows that if the $F$-jumping numbers of test ideal $\tau(X', \nu^* \Delta-\Ram_{\nu}, \ba^t)$ are discrete and rational, so are the $F$-jumping numbers of $\tau(X, \Delta, \ba^t)$.  Additionally, by adding a Cartier divisor $H$ to $\Delta$, we can assume that $\nu^* \Delta - \Ram_{\nu}$ is effective since $\tau(X, \Delta+H, \ba^t) = \tau(X, \Delta, \ba^t) \otimes \O_X(-H)$ by \autoref{lem.BasicTestIdealProperties}(d).  Finally, note that $$-K_{R'} - \nu^* \Delta + \Ram_{\nu} = \nu^*(-K_R - \Delta)$$ and so $\sR(-K_{R'} - \nu^* \Delta + \Ram_{\nu})$ is finitely generated by \autoref{lem.BasicPropertiesOfLocalSectionAlgebras}.  The upshot of this entire paragraph is of course that we may now without loss of generality assume that $\Delta$ is an integral effective divisor.

Next choose $c \in R$ that is a test element for both $((R, \Delta)$ and $S := \sR(-K_R - \Delta)$.  The choice of such a $c$ is easy, simply choose a test element so that additionally $-K_R - \Delta$ is Cartier on $X \setminus V(c)$.  Away from $V(c)$, $S$ looks locally like $R[t]$ which will certainly be strongly $F$-regular over wherever $R$ is strongly $F$-regular \cite{HochsterHunekeFRegularityTestElementsBaseChange}.  Let $H$ be the Cartier divisor corresponding to $c$ and consider the commutative diagram.
\[
{
\scriptsize
\xymatrix@C=30pt{
(F^e_* (\ba S)^{\lceil t(p^e - 1) \rceil}) \cdot \Hom_S(F^e_* S(h^* (p^e - 1) \Delta + h^* H), S) \ar@{^{(}->}[r] \ar[d]_{\nu} & \Hom_S(F^e_* S, S) \ar[r]^-{ \textnormal{eval@1}} \ar[d]_{\gamma} & S \ar[d]^{\rho} \\
(F^e_* \ba^{\lceil t(p^e - 1) \rceil}) \cdot \Hom_R(F^e_* R((p^e - 1)\Delta + H), R) \ar@{^{(}->}[r] & \Hom_R(F^e_* R, R) \ar[r]_-{\textnormal{eval@1}} & R.
}
}
\]
The sum over $e > 0$ of the images of the bottom rows is equal to $\tau(R, \Delta, \ba^t)$ and the sum over $e > 0$ of the images of the top row is equal to $\tau(S, h^* \Delta, (\ba S)^t)$.  Since $\nu$ surjects by \autoref{prop.MainCommutativeDiagramOfSymbolicRees}, we immediately see that $\rho(\tau(S, h^* \Delta, (\ba S)^t)) = \tau(R, \Delta, \ba^t)$.  But now observe that $K_S + h^* \Delta \sim 0$ by \autoref{lem.PullBackDivisorsOnSymbolicRees}.  But then the $F$-jumping numbers of $\tau(S, h^* \Delta, (\ba S)^t)$ are discrete and rational by \cite{SchwedeTuckerNonPrincipalIdeals}.  The result follows.
\end{proof}

We immediately obtain the following using the aforementioned breakthroughs in the MMP.

\begin{corollary}
\label{cor.DiscretenessForKLT}
Suppose that $R$ is strongly $F$-regular, of dimension $3$, and of finite type over an algebraically closed field of characteristic $p > 5$.  Then the $F$-jumping numbers $\tau(R, \Delta, \ba^t)$ are rational and without limit points for any choice of $\bQ$-divisor $\Delta$ and ideal $\ba$.
\end{corollary}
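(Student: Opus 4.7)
The plan is to combine \autoref{thm.DiscAndRatOfFjumping} with the finite generation result \autoref{thm.FGforKLT} via the fact that strongly $F$-regular singularities admit a KLT boundary. There is essentially no extra content beyond packaging these two theorems, so this should be a short argument.

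First, I would invoke \cite{SchwedeSmithLogFanoVsGloballyFRegular} (as pointed out in the remark immediately following \autoref{thm.FGforKLT}) to choose an effective $\bQ$-divisor $\Delta_0 \geq 0$ on $\Spec R$ such that $(R, \Delta_0)$ is KLT with $K_R + \Delta_0$ being $\bQ$-Cartier. This is possible precisely because $R$ is strongly $F$-regular.

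Next, since $(R, \Delta_0)$ is a KLT pair of dimension $3$ over an algebraically closed field of characteristic $p > 5$, \autoref{thm.FGforKLT} applies and tells us that the local section algebra $\sR(R, D)$ is finitely generated for \emph{every} $\bQ$-divisor $D$ on $\Spec R$. Specializing to $D = -K_R - \Delta$ for the $\Delta$ given in the statement, we conclude that $\sR(-K_R - \Delta)$ is finitely generated.

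The hypotheses of \autoref{thm.DiscAndRatOfFjumping} are now verified, so the $F$-jumping numbers of $\tau(R, \Delta, \ba^t)$ are rational and have no limit points, as desired. The proof has no genuine obstacle: all substantive work has already been absorbed into \autoref{thm.DiscAndRatOfFjumping} (which reduces via the symbolic Rees algebra $S = \sR(-K_R - \Delta)$ to the quasi-Gorenstein case of \cite{SchwedeTuckerNonPrincipalIdeals}) and into \autoref{thm.FGforKLT} (which invokes the recent MMP for threefolds in characteristic $p > 5$).
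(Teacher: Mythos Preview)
Your proof is correct and follows exactly the same approach as the paper's own proof: invoke \cite{SchwedeSmithLogFanoVsGloballyFRegular} to produce a KLT boundary $\Gamma$ with $K_R+\Gamma$ $\bQ$-Cartier, apply \autoref{thm.FGforKLT} to obtain finite generation of $\sR(-K_R-\Delta)$, and conclude via \autoref{thm.DiscAndRatOfFjumping}. The paper's proof is just the two-sentence version of what you wrote.
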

\begin{proof}
Since $R$ is strongly $F$-regular, there exists a divisor $\Gamma \geq 0$ so that $K_R + \Gamma$ is $\bQ$-Cartier and so that $(R, \Gamma)$ is KLT by \cite{SchwedeSmithLogFanoVsGloballyFRegular}.  The result then follows from \autoref{thm.FGforKLT} and \autoref{thm.DiscAndRatOfFjumping}.
\end{proof}

Of course, we also obtain discreteness and rationality of $F$-jumping numbers $\tau(R, \Delta, \ba^t)$ for any $R$, a 3-dimensional ring of finite type over an algebraically closed field $k$ of characteristic $p > 5$ such that there exists a $\Gamma \geq 0$ so that $(R, \Gamma)$ is KLT.

\subsection{A more general Hartshorne-Speiser type result}

In \autoref{cor.HartshorneSpeiserStabilization}, we used a compatibility of the formation of Rees algebras to prove that the images of $\Hom_R(F^e_* R, R) \to R$ stabilize for large $e$ if $S := \sR(-K_R)$ is finitely generated.  In this short section, we generalize this result to the case of $\bQ$-divisors, at least whose Weil-index is not divisible by $p$.  As an alternate strategy, one could try to prove compatibilities analogous to \autoref{prop.MainCommutativeDiagramOfSymbolicRees} for Rees algebras of $\bQ$-divisors.  Unfortunately this gets quite messy.  Instead we take a different approach utilizing ${\sheafproj} S$.  We first prove the result for $\bQ$-Gorenstein varieties and then we handle the finitely generated case via the small map $\mu : X' \to X$.

We do restrict ourselves to the case where the Weil-index of $K_X + \Delta$ is not divisible by $p$.  We realize that the methods we discuss here can apply to more general situations but there are then several potential competing definitions for what the stable image should be.

\begin{proposition}
\label{prop.HartshorneSpeiserStabilizeQGor}
Suppose that $(R, \Delta \geq 0)$ is a pair such that $K_R + \Delta$ is $\bQ$-Cartier.  Suppose that the Weil index of $K_R + \Delta$ is not divisible by $p$ and that $(p^e - 1)(K_R + \Delta)$ is an integral Weil divisor.  Then
\[
\sigma_{ne}(R, \Delta) := \Image\Big( \Hom_R(F^{ne}_* R( (p^{ne} - 1)\Delta), R) \xrightarrow{E^{ne} = \textnormal{eval@1}} R\Big)
\]
stabilizes for large $n$.
\end{proposition}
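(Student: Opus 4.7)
The plan is to reduce to the standard Hartshorne--Speiser--Lyubeznik--Gabber argument by exhibiting $\Hom_R(F^{ne}_*R((p^{ne}-1)\Delta),R)$ as a cyclic $F^{ne}_*R$-module generated by the $n$-fold iterate of a fixed $p^{-e}$-linear map whose associated divisor remains exactly $\Delta$. The images of evaluation-at-$1$ will then form a descending chain in the Noetherian ring $R$ and stabilize.

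First, I would arrange (after replacing $e$ by a sufficiently divisible multiple, and if necessary working stalk-locally on $\Spec R$, which is harmless because a descending chain of coherent ideal sheaves stabilizes iff it does so at every stalk) that $(p^e-1)(K_R+\Delta)\sim 0$. The coprimality of the Weil index of $K_R+\Delta$ with $p$ is precisely what permits this: it forces the index to divide $p^e-1$ for some $e$, and together with the integrality of $(p^e-1)(K_R+\Delta)$ this makes that divisor Cartier, hence principal locally. Applying \autoref{lem.MapsAndDivisorsBasics}(d), the module $\Hom_R(F^e_*R((p^e-1)\Delta),R)$ is then cyclic, generated by a map $\phi=\phi_\Delta$ with $\Delta_\phi=\Delta$.

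Next, by \autoref{lem.MapsAndDivisorsBasics}(c) the iterate
\[
\phi^n := \phi\circ F^e_*\phi\circ\cdots\circ F^{(n-1)e}_*\phi : F^{ne}_*R \to R
\]
still satisfies $\Delta_{\phi^n}=\Delta$, and a second application of part (d) shows that $\phi^n$ generates $\Hom_R(F^{ne}_*R((p^{ne}-1)\Delta),R)$ as an $F^{ne}_*R$-module. Consequently $\sigma_{ne}(R,\Delta)=\phi^n(F^{ne}_*R)$, and
\[
\sigma_{(n+1)e} = \phi^n\!\bigl(F^{ne}_*\phi(F^e_*R)\bigr) = \phi^n(F^{ne}_*\sigma_e) \subseteq \phi^n(F^{ne}_*R) = \sigma_{ne}.
\]
Thus $\{\sigma_{ne}(R,\Delta)\}_n$ is a descending chain of ideals in the Noetherian ring $R$ and must stabilize.

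The only real obstacle is the first step: the reduction to $(p^e-1)(K_R+\Delta)\sim 0$ so that \autoref{lem.MapsAndDivisorsBasics}(d) produces a generator $\phi_\Delta$ whose associated divisor is exactly $\Delta$ and not something strictly larger. Without this, the module is not obviously cyclic and the iteration argument collapses; the hypothesis that the Weil index is coprime to $p$ is the key input that makes the reduction go through, and after that everything is formal.
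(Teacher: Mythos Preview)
Your reduction step conflates the Weil index with the Cartier index, and this is a genuine gap. The hypothesis only says that the \emph{Weil} index of $K_R+\Delta$ is coprime to $p$, i.e.\ that some $(p^e-1)(K_R+\Delta)$ is an integral Weil divisor. It does \emph{not} say the Cartier index is coprime to $p$, so there is no reason for $(p^e-1)(K_R+\Delta)$ to be Cartier (hence locally $\sim 0$) for any $e$. A concrete obstruction: take $R=k[x,y,z]/(xy-z^p)$ in characteristic $p$, which is Gorenstein with class group $\bZ/p\bZ$, and let $\Delta$ be the prime Weil divisor cut out by $(x,z)$. Then $K_R+\Delta$ has Weil index $1$ but Cartier index $p$, so $(p^e-1)(K_R+\Delta)$ is integral for every $e$ yet never Cartier. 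In this situation \autoref{lem.MapsAndDivisorsBasics}(a,d) does \emph{not} hand you a generator $\phi_\Delta$ with $\Delta_{\phi_\Delta}=\Delta$, and the iteration argument collapses exactly as you feared. Localizing stalkwise does not help: the failure occurs at closed points, not just in codimension $1$.

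This is precisely why the paper's proof is more elaborate. It fixes the (possibly $p$-divisible) Cartier index $m$, passes to an arithmetic progression of exponents $\theta(a)=ca+r$ along which $(p^{\theta(a)e}-1)\bmod m$ is constant, so that the Hom-sheaves differ only by twists by the \emph{line bundle} $\O_X(m(K_X+\Delta))$. After trivializing that line bundle (not the Weil divisor!) one gets an honest chain $\cdots\to F^{\theta(a)e}_*M\to F^{\theta(a-1)e}_*M\to\cdots$, and the Weil-index hypothesis is used only in codimension $1$---where Weil $=$ Cartier---to verify the maps are iterates of a single one, so that Gabber's theorem applies. Your approach would work verbatim under the stronger hypothesis that the Cartier index of $K_R+\Delta$ is coprime to $p$, but that is not what is assumed.

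A smaller point: your closing sentence ``a descending chain of ideals in the Noetherian ring $R$ and must stabilize'' is false as written (Noetherian gives ACC, not DCC). From your opening it is clear you meant to invoke Hartshorne--Speiser--Lyubeznik--Gabber at this point, which is the correct conclusion once one has $\sigma_{ne}=\phi^n(F^{ne}_*R)$ for a fixed $\phi$; just say that explicitly.
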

\begin{proof}
Fix $m > 0$ so that $m(K_R + \Delta)$ is a Cartier divisor.
The main idea is that module $\Hom_R(F^{ne}_* R( (p^{ne} - 1)\Delta), R)$ only takes on the values of finitely many sheaves, at least up to twisting by line bundles (in particular, multiples of $R(m(K_R + \Delta))$.  We also take advantage of the fact that it is sufficient to show that the images stabilize partially up the chain.

\begin{claim}
Fix $n_0 > 0$ and consider $n \geq n_0$.  Then $$\Hom_R(F^{ne}_* R( (p^{ne} - 1)\Delta), R) \xrightarrow{E^{ne} = \textnormal{eval@1}} R$$ factors through $$\Hom_R(F^{n_0e}_* R( (p^{n_0 e} - 1)\Delta), R) \xrightarrow{E^{n_0e} = \textnormal{eval@1}} R.$$  Hence it is sufficient to show that the images of $\Hom_R(F^{ne}_* R( (p^{ne} - 1)\Delta), R) $ in $$\Hom_R(F^{n_0e}_* R( (p^{n_0e} - 1)\Delta), R)$$ stabilize.
\end{claim}
\begin{proof}[Proof of claim]
One simply notices that
\[
p^{(n-n_0)e} (p^{n_0e} - 1) \Delta \leq (p^{ne} - 1)\Delta
\]
and hence $(R( (p^{ne} - 1) \Delta))^{1/p^{ne}}$ contains $(R( (p^{n_0e} - 1) \Delta))^{1/p^{n_0e}}$.  Thus the claimed factorization occurs simply by restriction of scalars.
\end{proof}
We continue on with the main proof.  Note that $p^t \mod m$ is eventually periodic.  Then choose a linear function $\theta(a) = ca + r$, for $c > 0, r \geq 0$, $c, r\in \bZ$, such that $(p^{\theta(a)e} - 1) \mod m$ is constant.  Set
\[
M = R( ((1-p^{re})\mod{m})(K_R + \Delta)) = R( ((1-p^{\theta(0)e})\mod{m})(K_R + \Delta))
\]
and note that for any $a \geq 0$,
\[
\begin{array}{rl}
& \Hom_R( F^{\theta(a)e}_* R((p^{\theta(a)e} - 1)\Delta), R) \\
\cong & F^{\theta(a)e}_* R( (1-p^{\theta(a)e})(K_R + \Delta))\\
\cong & F^{\theta(a)e}_* \big(R( ((1 - p^{\theta(a)e})\mod{m})(K_R + \Delta)) \otimes R(\lfloor {1 - p^{\theta(a)e}\over m}\rfloor m(K_R + \Delta))\big)\\
\cong & F^{\theta(a)e}_* \big(M \otimes R(\lfloor {1 - p^{\theta(a)e}\over m}\rfloor m(K_R + \Delta))\big)
\end{array}
\]

By inverting an element of $R$ if necessary, we may assume that $m(K_R + \Delta) \sim 0$.  Thus by utilizing this, we have maps
\[
\ldots \xrightarrow{t_a} F^{\theta(a) e}_* M \xrightarrow{t_{a-1}}  F^{\theta(a-1) e}_* M \to \ldots \xrightarrow{t_1} F^{\theta(1) e}_* M \xrightarrow{t_0} F^{\theta(0) e}_* M.
\]
If these maps are Frobenius pushforwards of each other, \ie $F^{ce}_* t_{a-1} = t_a$ (or at least up to a unit), then we can apply the standard Hartshorne-Speiser-Lyubeznik-Gabber theorem \cite{Gabber.tStruc} to conclude that the images stabilize in $F^{\theta(0) e}_* M$.  But this may be checked in codimension 1 (since all sheaves are reflexive and so maps between them are determined in codimension 1).  However, after localizing to reduce to codimension 1, all the complicated twisting we have done is irrelevant.   Furthermore, in codimension $1$, $R$ is Gorenstein with $K_R \sim 0$ and $\Delta$ is $\bQ$-Cartier with index not divisible by $p > 0$ (since its Weil index was not divisible by $p > 0$).  Our chain of maps then just turns into
\[
\xymatrix{
\Hom_R( F^{\theta(a)e}_* R((p^{\theta(a)e} - 1)\Delta), R) \ar[r] & \Hom_R( F^{\theta(a-1)e}_* R((p^{\theta(a-1)e} - 1)\Delta), R) \to \ldots \\
F^{\theta(a)e}_* \omega_R( -p^{\theta(a)e}(K_R) - (p^{\theta(a)e} -1) \Delta) \ar@{<->}[u]_{\sim} \ar[r]_-{\Tr} & F^{\theta(a-1)e}_* \omega_R( -p^{\theta(a-1)e}(K_R) - (p^{\theta(a-1)e} -1) \Delta) \ar@{<->}[u]_{\sim}\\
F^{(ca+r)e}_* \omega_R( - (p^{(ca+r)e} -1) \Delta) \ar@{<->}[u]_{\sim} \ar[r]_-{\Tr} & F^{(c(a-1)+r)e}_* \omega_R( - (p^{(c(a-1)+r)e} -1) \Delta) \ar@{<->}[u]_{\sim}.
}
\]
The bottom horizontal map is then obtained via
\[
\begin{array}{rl}
& F^{(ca+r)e}_* \omega_R( - (p^{(ca+r)e} -1) \Delta) \\
\hookrightarrow & F^{(ca+r)e}_* \omega_R( - p^{ce}(p^{(c(a-1)+r)e} -1) \Delta) \\
\xrightarrow{\Tr} & F^{(c(a-1)+r)e}_* \omega_R( -(p^{(c(a-1)+r)e} -1) \Delta).
\end{array}
\]
Note the inclusion $\hookrightarrow$ can be identified with multiplication by a defining equation for
\[
- p^{ce}(p^{(c(a-1)+r)e} -1) \Delta + (p^{(ca+r)e} -1) \Delta = (p^{ce}-1)\Delta.
\]
This is independent of $a$ and so the maps in our chain are really the same, up to pushforward, as claimed.

Note that this completes the proof.  Even though we only proved stabilization of images for a subset of $ne > 0$, these images are descending and our subset is infinite.
\end{proof}

Now we are in a position to prove \autoref{cor.HartshorneSpeiserStabilization} in the more general situation.

\begin{theorem}
\label{thm.HartshorneSpeiserStabilization}
Suppose that $R$ is an $F$-finite normal domain and that $B \geq 0$ is a $\bQ$-divisor with Weil-index not divisible by $p$.  If the anti-log-canonical algebra $\sR(-K_R-B)$ is finitely generated, then the image of the evaluation at 1 map $\Hom_R(F^e_* R( (p^e - 1) B), R) \to R$ stabilizes for $e$ sufficiently divisible.
\end{theorem}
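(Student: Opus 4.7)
The plan is to reduce to the $\bQ$-Cartier situation of \autoref{prop.HartshorneSpeiserStabilizeQGor} by passing to the small $\bQ$-Cartierization $\mu\colon X' = \sheafproj S \to X$ of $-K_X - B$, where $S = \sR(-K_X - B)$. Writing $B' := \mu^{-1}_* B$, I would first check that all the hypotheses transfer. By \autoref{subsec.LocalSectionRings} the morphism $\mu$ is small and projective; by \autoref{lem.PullBackDivisorsOnSymbolicRees} (together with \autoref{rmk.pullbacksmall}) the divisor $-K_{X'} - B' = \mu^*(-K_X - B)$ is $\mu$-ample and $\bQ$-Cartier; and because $\mu$ is small, whenever $m(K_X + B)$ is Cartier so is $\mu^*(m(K_X+B)) = m(K_{X'} + B')$. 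Hence the Weil-index of $K_{X'} + B'$ divides that of $K_X + B$, and in particular is not divisible by $p$.

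Fix $e_0 > 0$ so that $(p^{e_0} - 1)(K_{X'} + B')$ is integral. Applying \autoref{prop.HartshorneSpeiserStabilizeQGor} to $(X', B')$ yields that
$$\sigma_{n e_0}(X', B') := \Image\Bigl( \Hom_{\O_{X'}}(F^{n e_0}_* \O_{X'}((p^{n e_0} - 1) B'), \O_{X'}) \xrightarrow{\textnormal{eval@1}} \O_{X'} \Bigr)$$
stabilizes for $n \gg 0$. The remaining task is the sheaf-theoretic identification
$$\mu_* \sigma_{n e_0}(X', B') \; = \; \sigma_{n e_0}(X, B),$$
which immediately propagates stabilization downstairs; the descending property of the downstairs chain in $n$ is verified exactly as in the Claim in the proof of \autoref{prop.HartshorneSpeiserStabilizeQGor}, since it is a purely formal consequence of restriction of scalars.

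For the identification, I would argue that
$$\mu_* \sHom_{\O_{X'}}\bigl(F^{e}_* \O_{X'}((p^{e} - 1) B'), \O_{X'}\bigr) \; \cong \; \sHom_{\O_X}\bigl(F^{e}_* \O_X((p^{e} - 1) B), \O_X\bigr),$$
compatibly with the evaluation-at-1 maps into $\mu_* \O_{X'} = \O_X$. Both sheaves are reflexive on $X$: the right-hand side tautologically, the left because $\mu$ is small and pushforward of reflexive sheaves along a small map is reflexive (by \autoref{subsec.LocalSectionRings} the exceptional locus is codimension $\geq 2$, and one applies Hartog's for reflexive sheaves). On the open $U \subseteq X$ over which $\mu$ is an isomorphism (so in particular over which $-K_X - B$ is Cartier) the two sheaves visibly agree, hence they agree globally by reflexivity. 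Taking images under the (compatible) evaluation-at-1 maps and using left-exactness of $\mu_*$ yields the displayed equality of ideal sheaves.

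The main obstacle I anticipate is the reflexive-sheaf identification in the last paragraph, for which one must resist the tempting but wrong move of pulling back $F^{e}_* \O_X((p^{e} - 1) B)$ along $\mu$: this pullback is not generally isomorphic to $F^{e}_* \O_{X'}((p^{e} - 1) B')$ (and need not even be reflexive on $X'$). The correct route is the codimension-two comparison together with reflexivity, which is precisely where the smallness of $\mu$ earns its keep; the rest of the argument is bookkeeping on top of the lemmas already assembled.
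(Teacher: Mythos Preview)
Your overall strategy---pass to the $\bQ$-Cartierization $X'$, apply \autoref{prop.HartshorneSpeiserStabilizeQGor} there, and push back down---is exactly the paper's. The gap is in the descent step. You claim that the reflexive identification of $\Hom$ sheaves, together with ``left-exactness of $\mu_*$'', yields
\[
\mu_* \sigma_{n e_0}(X', B') \;=\; \sigma_{n e_0}(X, B).
\]
But left-exactness gives only one containment: with $\phi'$ the evaluation map on $X'$ and image $\sigma'$, the map $\mu_* \phi'$ factors as $\mu_* \sHom \to \mu_* \sigma' \hookrightarrow \O_X$, hence $\sigma_{ne_0}(X,B)=\Image(\mu_*\phi') \subseteq \mu_*\sigma'$. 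Equality would require $\mu_* \sHom \to \mu_* \sigma'$ to be surjective, i.e.\ a \emph{right}-exactness statement, and there is no reason for this to hold for arbitrary $n$ (the obstruction lives in $R^1\mu_*\ker\phi'$). Knowing only $\sigma_{ne_0}(X,B)\subseteq \mu_*\sigma_{ne_0}(X',B')$ with the right side eventually constant does not stabilize a \emph{descending} chain on the left.

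This is precisely where the paper invests its work. Rather than trying to prove the equality for every $n$, the paper uses that $-(K_{X'}+\mu^*B)$ is $\mu$-ample: the sheaves $\O_{X'}((1-p^{ne})(K_{X'}+\mu^*B))$ are, up to finitely many twists, large positive multiples of an ample class, so for $n$ sufficiently large the relevant higher direct images vanish and the formation of the stable image commutes with $\mu_*$. Concretely the paper shows that for $n\geq n_0$ the chain of $\mu_*\sigma_{ne}$ is realized by surjections, forcing stabilization downstairs. Your argument can be repaired by inserting exactly this ampleness/vanishing step; without it the descent does not go through.
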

\begin{proof}
For this proof, we will phrase our evaluation-at-1 maps in terms of the trace
$$F^e_* R( (1-p^e)(K_R + B)) \to R.$$  We thus fix an $e > 0$ so that $(p^e - 1)(K_R + B)$ is an integral Weil divisor.

Let $\mu : X' =\sheafproj \sR(-K_R-B) \to X = \Spec R$ be as before.  We observe that $\mu^*(-K_R-B)$ is $\bQ$-Cartier and also still has Weil-index not divisible by $p$ by \autoref{lem.BasicPropertiesOfLocalSectionAlgebras}.  Hence the images
{
\begin{equation}
\label{eq.ChainOfImagesOnY}
\begin{array}{rl}
\ldots \to & F^{(n+1)e}_* \O_{X'}( (1-p^{(n+1)e})(K_{X'} + \mu^* B)) \\
\to & F^{ne}_* \O_{X'}( (1-p^{ne})(K_{X'} + \mu^* B)) \\
\to & \ldots \\
\to & F^e_* \O_{X'}( (1-p^e)(K_{X'} + \mu^*B)) \\
\to & \O_{X'}
\end{array}
\end{equation}
}
stabilize by \autoref{prop.HartshorneSpeiserStabilizeQGor}.  In fact, the same argument even shows that the images even stabilize in any finite stage, such as in $F^{ne}_* \O_{X'}( (1-p^{ne})(K_{X'} + \mu^* B))$.  However, the terms and maps in this chain take on finitely many values up to twisting by (large) Cartier multiples of $-(K_{X'} + \mu^* B)$ (as argued in \autoref{prop.HartshorneSpeiserStabilizeQGor}).  Our goal is to thus show that these images stabilize after pushing forward by $\mu$.
\begin{claim}
If one applies $\mu_*$ to \autoref{eq.ChainOfImagesOnY}, obtaining
\[
\ldots \to \mu_* F^{ne}_* \O_{X'}( (1-p^{ne})(K_{X'} + \mu^* B))
\to \ldots
\to \mu_* F^e_* \O_{X'}( (1-p^e)(K_{X'} + \mu^*B))
\to \mu_* \O_{X'}
\]
then the chain of images in $\O_X = \mu_* \O_{X'}$ still stabilizes.
\end{claim}
\begin{proof}[Proof of claim]
Choose $d > 0$ so that
\begin{equation}
\label{eq.StabledImage}
\Image\Big(F^{(n+d)e}_* \O_{X'}( (1-p^{(n+d)e})(K_{X'} + \mu^* B)) \to F^{ne}_* \O_{X'}( (1-p^{ne})(K_{X'} + \mu^* B))\Big)
\end{equation}
is equal to the stable image, which we denote by $F^{ne}_* \sigma_{ne}$, for all $n \geq 0$ (note by \autoref{prop.HartshorneSpeiserStabilizeQGor} there are finitely many conditions).  Observe that there are only finitely many $\sigma_{ne}$ up to twisting by (large) multiples of $-(K_{X'}+\mu^*B)$.  The fact that $-(K_{X'} + \mu^* B)$ is ample implies that there exists an $n_0 \geq 0$ so that for any $n \geq n_0$,
\[
\begin{array}{rlcl}
& F^{ne}_* \mu_* \sigma_{ne}\\
= & \Image\Big(F^{(n+d)e}_* \mu_* \O_{X'}( (1-p^{(n+d)e})(K_{X'} + \mu^* B)) & \to & F^{ne}_* \mu_* \O_{X'}( (1-p^{ne})(K_{X'} + \mu^* B))\Big)\\
 = & \Image\Big(F^{(n+d)e}_* \mu_* \O_{X'}( (1-p^{(n+d)e})(K_{X'} + \mu^* B)) & \to & F^{ne}_* \mu_* \sigma_{ne} \Big)\\
= & \Image\Big(F^{(n+2d)e}_* \mu_* \O_{X'}( (1-p^{(n+2d)e})(K_{X'} + \mu^* B)) & \to & F^{ne}_* \mu_* \O_{X'}( (1-p^{ne})(K_{X'} + \mu^* B))\Big).\\
\end{array}
\]
But the map $F^{(n+2d)e}_* \mu_* \O_{X'}( (1-p^{(n+2d)e})(K_{X'} + \mu^* B)) \to F^{ne}_* \mu_* \O_{X'}( (1-p^{ne})(K_{X'} + \mu^* B))$ factors through
\[
F^{(n+2d)e}_* \mu_* \O_{X'}( (1-p^{(n+2d)e})(K_{X'} + \mu^* B)) \to F^{(n+d)e}_* \mu_* \O_{X'}( (1-p^{(n+d)e})(K_{X'} + \mu^* B))
\]
which has image $\mu_* \sigma_{(n+d)e}$ by our assumption that \autoref{eq.StabledImage} is the stable image, applied to the choice of $n = n+d$.  It follows that $F^{(n+d)e}_* \mu_* \sigma_{(n+d)e} \to F^{ne}_* \mu_* \sigma_{ne}$ surjects for all $n$ and so by composition, $F^{(n+c)e}_* \mu_* \sigma_{(n+c)e} \to F^{ne}_* \mu_* \sigma_{ne}$ surjects for every $n \geq n_0$ and every $c \geq d$ (note $n$ does not depend on $c$).  Thus since $F^{(n+c)e}_* \mu_* \sigma_{(n+c)e}$ is the image of $\mu_* F^{(n+d+c)e}_*\O_{X'}( (1-p^{(n+d+c)e})(K_{X'} + \mu^* B))$, we see that
\[
\begin{array}{rlcl}
& \Image\Big(F^{(n+c)e}_* \mu_* \O_{X'}( (1-p^{(n+c)e})(K_{X'} + \mu^* B)) & \to & F^{ne}_* \mu_* \O_{X'}( (1-p^{ne})(K_{X'} + \mu^* B))\Big) \\
= & \mu_* \Image\Big(F^{(n+c)e}_* \O_{X'}( (1-p^{(n+c)e})(K_{X'} + \mu^* B)) & \to & F^{ne}_* \O_{X'}( (1-p^{ne})(K_{X'} + \mu^* B))\Big)\\
= & \mu_* \sigma_{ne}
\end{array}
\]
for all $n \geq n_0$ and all $c \geq 2d$.  This clearly proves the desired stabilization.
\end{proof}
We return to the proof of theorem.  But this is trivial once we observe that
\[
F^{ne}_* \mu_* \O_{X'}( (1-p^{ne})(K_{X'} + \mu^* B)) = F^{ne}_* \O_X( (1-p^{ne})(K_X + B))
\]
since $\mu$ is small.
Hence the proof is complete.
\end{proof} 

\section{Stabilization and discreteness via positivity}
\label{sec.StablizationViaPositivity}

In the previous section we showed the discreteness and rationality of $F$-jumping numbers via passing to the local section algebra (\ie a symbolic Rees algebra) where we already knew discreteness and rationality.  In this section, we recover the same discreteness result in the projective setting by using the methods of \cite{ChiecchioUrbinatiAmpleWeilDivisors} which allow us to apply asymptotic vanishing theorems to Weil divisors.  Indeed we first prove global generation results for test ideals by employing similar methods to \cite{MustataNonNefLocusPositiveChar}.

\begin{setting}
Let $X$ be a normal projective variety of characteristic $p > 0$, $\Delta \geq 0$ an effective Weil $\QQ$-divisor, $\fra$ an ideal sheaf on $X$ and $t \in \QQ$. We make no assumptions about $K_X + \Delta$ being $\QQ$-Cartier.
\end{setting}

Assume $\sG$ is a line bundle such that there are global sections $x_1, \ldots, x_m \in H^0(X, \fra \otimes \cG)$ which globally generate $\fra \otimes \cG$ and then let $\Sym^c(x_1, \ldots, x_m)$ denote the $c$th symmetric power of the vector space $\langle x_1, \ldots, x_m \rangle$.  Observe that $\Sym^c(x_1, \ldots, x_l) \subseteq H^0(X, \fra^c \otimes \cG^c)$ globally generates $\fra^c \otimes \cG^c$.  Thus we have a surjection of sheaves
\[
\Sym^c(x_1, \ldots, x_t) \otimes_k \cG^{-c} \to \fra^c.
\]

\begin{lem}\label{lem:FiniteSumNonPrinc}
If the Weil-index of $\Delta$ is not divisible by $p$, and $t=a/b$, with $p$ not dividing $b$, then there is a Cartier divisor $H$ and a finite set of integers $e_1, \ldots, e_s \gg 0$ such that
$(p^{e_i} - 1) \Delta$ is integral, $(p^{e_i} - 1)t \in \ZZ$ and  $\tau(X, \Delta, \fra^t)$ equals
\[
\sum_{i=1}^s  \Image\left({\scriptsize \xymatrix{F_*^{e_i} \Sym^{t(p^{e_i}-1)}(x_1, \ldots, x_m) \otimes_k \cG^{-t(p^{e_i}-1)} \otimes  \cO_X( (1-p^{e_i}) (K_X+\Delta) - H) \ar@{->>}[d] \ar[r] &  \cO_X \\
 F_*^{e_i} \fra^{t(p^{e_i} - 1)} \cdot \cO_X( (1-p^{e_i}) (K_X+\Delta) - H) \ar[ur]_-{\Tr^{e_i}} & }}\right).
\]
\end{lem}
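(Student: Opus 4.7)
The strategy is to start from the ``sum of images'' formula in \autoref{lem.TestIdealConstruction}, cut the sum down to an arithmetic progression of Frobenius exponents on which no ceilings are needed, and then invoke Noetherianity to reduce to finitely many exponents. Concretely, after fixing a sufficiently large Cartier divisor $H\geq 0$ corresponding to an $(R,\Delta,\fra^{t})$-test element, \autoref{lem.TestIdealConstruction} gives
\[
  \tau(X,\Delta,\fra^{t})
  \;=\;\sum_{e\gg 0}\Image\!\Big(F^{e}_{*}\bigl(\fra^{\lceil tp^{e}\rceil}\cdot\cO_X(K_X-p^{e}(K_X+\Delta)-H)\bigr)\xrightarrow{\Tr^{e}}\cO_X\Big).
\]

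Let $n$ denote the Weil-index of $\Delta$. By hypothesis $\gcd(p,nb)=1$, so there exists $d>0$ with $p^{d}\equiv 1\pmod{nb}$; consequently, for every $e$ in the semigroup $\Sigma:=d\,\bZ_{\geq 1}$ the divisor $(p^{e}-1)\Delta$ is integral and $(p^{e}-1)t\in\bZ$, and all the ceilings in the displayed formula collapse. The next and crucial step is to show that the sum above may be restricted to $e\in\Sigma$. I plan to accomplish this by Frobenius iteration: using \autoref{lem.MapsAndDivisorsBasics}(b)--(c), a contributing map at an arbitrary level $e$ can be precomposed with an auxiliary map at level $e'$ (chosen so that $e+e'\in\Sigma$) whose underlying module map is surjective, producing a map at level $e+e'\in\Sigma$ whose image contains the original one, after absorbing the extra fractional parts and $\fra$-power discrepancies into $H$ exactly as in the proof of \autoref{lem.TestIdealConstruction}.

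Once this reduction is in place, the partial sums over $\Sigma$ form an ascending chain of coherent ideals in $\cO_X$ and hence stabilize by Noetherianity, producing the desired finite collection $e_{1},\dots,e_{s}\in\Sigma$. To put the answer in the precise form of the statement I then invoke the global generation hypothesis on $\fra\otimes\cG$: the sections $x_{1},\dots,x_{m}$ give a surjection $\Sym^{c}(x_{1},\dots,x_{m})\otimes_{k}\cG^{-c}\twoheadrightarrow\fra^{c}$ for every integer $c\geq 0$; applying this with $c=t(p^{e_{i}}-1)$, tensoring with $\cO_X((1-p^{e_{i}})(K_X+\Delta)-H)$, and composing with $F^{e_{i}}_{*}$ and $\Tr^{e_{i}}$ produces the surjections onto the $i$-th summands. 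The main obstacle will be the reduction to $\Sigma$: care is needed to verify that the iteration/composition trick genuinely recovers every contributing map (not merely many of them), and that the discrepancy between $\lceil tp^{e}\rceil$ and $t(p^{e}-1)$ can be absorbed by enlarging $H$ by a fixed multiple independent of $e$.
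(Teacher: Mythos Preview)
Your overall strategy---start from the sum-of-images formula in \autoref{lem.TestIdealConstruction}, restrict to an arithmetic progression $\Sigma$ on which all ceilings disappear, invoke Noetherianity to cut to finitely many exponents, and then apply the surjection $\Sym^{c}(x_{1},\dots,x_{m})\otimes\cG^{-c}\twoheadrightarrow\fra^{c}$---is exactly the paper's approach, and the last two steps go through as you describe.

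The gap is in your mechanism for restricting to $\Sigma$.  You propose to go \emph{up}: given a contributing map $\phi$ at level $e$, precompose with an auxiliary $\psi$ at level $e'$ (with $e+e'\in\Sigma$) ``whose underlying module map is surjective'' so that the composite at level $e+e'$ has image containing that of $\phi$.  But no single $\psi\in\Hom_{R}\bigl(F^{e'}_{*}R(\lceil(p^{e'}-1)\Delta\rceil),R\bigr)$ is surjective unless $(X,\Delta)$ is sharply $F$-pure, and even summing over all such $\psi$ the resulting ideal $\sum_{\psi}\psi(F^{e'}_{*}cR)$ is essentially the single-step image $\sigma_{e'}(X,\Delta)$ twisted by $c$, which can be a proper ideal.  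So there is no reason to expect $cR\subseteq\sum_{\psi}\psi(F^{e'}_{*}cR)$, and the precomposition trick does not recover the image of $\phi$.

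The paper avoids this by going \emph{down} rather than up: writing an arbitrary exponent as $e_{0}m+b$ with $0\le b<e_{0}$ and using the factorization $\Tr^{e_{0}m+b}=\Tr^{e_{0}m}\circ F^{e_{0}m}_{*}\Tr^{b}$, one only needs the containment
\[
\Tr^{b}F^{b}_{*}\bigl(\fra^{\lceil tp^{e_{0}m+b}\rceil}\cO_X(\lceil K_X-p^{e_{0}m+b}(K_X+\Delta)-H-G\rceil)\bigr)\;\subseteq\;\fra^{\lceil tp^{e_{0}m}\rceil}\cO_X(\lceil K_X-p^{e_{0}m}(K_X+\Delta)-H\rceil),
\]
which is purely a containment of sheaves (no surjectivity required).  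The key ingredient is an element $d\in\fra^{\ell p^{e_{0}}}$ (with $\ell$ a bound on the generators of $\fra$) satisfying $d\cdot\fra^{\lceil tp^{e_{0}m+b}\rceil}\subseteq(\fra^{\lceil tp^{e_{0}m}\rceil})^{[p^{b}]}$, after which the Frobenius power pulls through $\Tr^{b}$ by the projection formula; one sets $G=\Div(d)+p^{e_{0}}H$.  After this reduction, the remaining adjustments from $\lceil tp^{e}\rceil$ to $t(p^{e}-1)$ and from $p^{e}(K_X+\Delta)$ to $(p^{e}-1)(K_X+\Delta)$ are handled by further enlarging $H$, exactly as you anticipate.
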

At some level this result is obvious.  The only technicalities involve showing that the various rounding choices we make all give the same result in the end (since we can absorb any differences into the test element -- a local generator of $H$).  We include a complete proof but we invite the reader to skip over it if they are already familiar with this type of argument.
\begin{proof}
The statement in the end is local and so trivializing $\cG$ it suffices to show that
\[
\tau(X, \Delta, \fra^t) = \sum_{i = 1}^s \Tr^{e_i} F_*^{e_i} \big(\fra^{t(p^{e_i} - 1)} \cdot \cO_X( (1-p^{e_i}) (K_X+\Delta) - H)\big).
\]
Pick an effective Cartier divisor $H_0$ corresponding to the vanishing locus of a test element so that for any integer $e_0 > 0$
\[
\tau(X, \Delta, \fra^t) = \sum_{e \geq e_0} \Tr^e F_*^e \fra^{\lceil t p^e  \rceil} \cO_X ( \lceil K_X - p^e(K_X + \Delta) - H \rceil )
\]
for all Cartier $H \geq H_0$, c.f., \cite[Prop. 3.6]{SchwedeTuckerNonPrincipalIdeals}, \cite[Def.-Prop. 3.3]{BlickleSchwedeTakagiZhang}. This equality also holds for any $\QQ$-divisor $H \geq H_0$ as one can always pick a Cartier $H'$ so that $H' \geq H \geq H_0$ and one obtains inclusions
\begin{eqnarray*}
\tau(X,\Delta, \fra^t) &=& \sum_{e \geq e_0} \Tr^e F_*^e \fra^{\lceil t p^e  \rceil}\cO_X ( \lceil K_X - p^e(K_X + \Delta) - H_0 \rceil ) \\
&\supset& \sum_{e \geq e_0} \Tr^e F_*^e \fra^{\lceil t p^e  \rceil}\cO_X ( \lceil K_X - p^e(K_X + \Delta) - H \rceil ) \\
 &\supset& \sum_{e \geq e_0} \Tr^e F_*^e \fra^{\lceil t p^e  \rceil}\cO_X ( \lceil K_X - p^e(K_X + \Delta) - H ' \rceil ) = \tau(X,\Delta, \fra^t).\end{eqnarray*}
 Next consider the claim which will allow us restrict to those $e$ which are multiples of $e_0$.
 \begin{claim}
For any Weil divisor $H \geq H_0$, there exists a Cartier divisor $G$ such that for any integer $0 \leq b \leq e_0-1$ and for any integer $m > 0$ we have
\[
\begin{array}{rl}
& \Tr^{{e_0 m + b}} F_*^{e_0 m + b} \fra^{\lceil t p^{e_0 m + b} \rceil} \cO_X ( \lceil K_X - p^{e_0 m + b}(K_X + \Delta) - H - G \rceil ) \\
\subseteq & \Tr^{{e_0 m}} F_*^{e_0 m} \fra^{\lceil t p^{e_0 m } \rceil} \cO_X (  \lceil K_X - p^{e_0 m} (K_X + \Delta) - H \rceil\rceil ).
\end{array}
\]
\end{claim}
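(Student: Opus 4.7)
The plan is to use the factorization $\Tr^{e_0 m + b} = \Tr^{e_0 m} \circ F^{e_0 m}_* \Tr^b$, which reduces the claim to showing that the partial trace $\Tr^b$ sends
\[
F^b_* \fra^{\lceil t p^{e_0 m + b} \rceil}\, \cO_X\big( \lceil K_X - p^{e_0 m + b}(K_X + \Delta) - H - G \rceil \big)
\]
into $\fra^{\lceil t p^{e_0 m} \rceil}\, \cO_X(\lceil K_X - p^{e_0 m}(K_X + \Delta) - H \rceil)$. For the divisor part I would invoke the twisted trace $\Tr^b : F^b_* \cO_X(p^b D + K_X - p^b K_X) \to \cO_X(D)$ with $D := \lceil K_X - p^{e_0 m}(K_X + \Delta) - H \rceil$. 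A prime-by-prime ceiling comparison gives $p^b D + K_X - p^b K_X \geq K_X - p^{e_0 m + b}(K_X + \Delta) - p^b H$, while the source divisor is bounded above by $K_X - p^{e_0 m + b}(K_X + \Delta) - H - G$ plus a reduced effective correction $E$ supported on the fixed finite set $\Supp(K_X) \cup \Supp(\Delta) \cup \Supp(H) \cup \Supp(G)$, independent of $m$. The required inequality then reduces to $G \geq (p^b - 1) H + E$, a bound uniform in $m$ that has to hold only for the finitely many $b \in \{0, \dots, e_0 - 1\}$.

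For the ideal-theoretic part I would use the standard Skoda-type bound: if $\fra$ is locally generated by $\ell$ elements, then for any $R$-linear $\phi : F^b_* R \to R$ and any $k \geq 0$,
\[
\phi\big(F^b_* \fra^{p^b k + \ell(p^b - 1)}\big) \subseteq \fra^k \cdot \phi(F^b_* R).
\]
This follows by writing each exponent $\beta_i = p^b q_i + r_i$ with $0 \leq r_i < p^b$ in a degree $p^b k + \ell(p^b - 1)$ monomial of $\fra$, noticing that $\sum q_i \geq k$, and pulling the resulting $p^b$-th power out of $\phi$. Applied with $k = \lceil t p^{e_0 m} \rceil$, the desired exponent $p^b k + \ell(p^b - 1)$ exceeds the available exponent $\lceil t p^{e_0 m + b} \rceil$ by the $m$-independent amount $N_0 := (\ell + 1)(p^{e_0 - 1} - 1)$. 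I would absorb this discrepancy by arranging $\cO_X(-G) \subseteq \fra^{N_0}$, for instance by adding to $G$ the divisor of any nonzero $g \in \fra^{N_0}$.

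I would finish by taking $G = G_1 + G_2$, where $G_1$ is a Cartier divisor dominating $(p^{e_0 - 1} - 1) H + E$ and $G_2 := \Div(g)$ for some $0 \neq g \in \fra^{N_0}$. The resulting $G$ satisfies both the divisorial and ideal-theoretic conditions simultaneously for all $b \in \{0, \dots, e_0 - 1\}$ and all $m > 0$, and the desired containment of images follows. The main obstacle is the uniform-in-$m$ bookkeeping of the three independent error sources, namely divisor rounding, ceiling of the fractional $\fra$-exponent, and the additive Skoda correction $\ell(p^b - 1)$, and checking that each is controlled by a constant depending only on $b$, $\ell$, $e_0$, $\fra$, and the fixed combinatorial data of $\Delta$, $H$, and $K_X$, but never on $m$.
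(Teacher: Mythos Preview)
Your proposal is correct and follows essentially the same route as the paper: factor $\Tr^{e_0 m+b}=\Tr^{e_0 m}\circ F^{e_0 m}_*\Tr^b$, use a pigeonhole/Skoda bound to force $\fra^{\lceil t p^{e_0 m+b}\rceil}$ (after multiplication by a fixed element of a large power of $\fra$) into $(\fra^{\lceil t p^{e_0 m}\rceil})^{[p^b]}$, and handle the divisor side with the twisted trace $\Tr^b:F^b_*\cO_X(K_X+p^bD')\to\cO_X(K_X+D')$. The paper packages both corrections into the single choice $G=\Div(d)+p^{e_0}H$ with $d\in\fra^{\ell p^{e_0}}$ and pulls $H$ out via the projection formula rather than doing your prime-by-prime ceiling comparison, but this is only a cosmetic difference; your decomposition $G=G_1+G_2$ achieves exactly the same uniform-in-$m$ control.
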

\begin{proof}
To prove the claim, first note that by \cite[Lemma 4.6]{SchwedeFAdjunction} (among many other places), if $d \in \fra^{ l p^{e_0}} \subseteq \fra^{l (p^{b}-1)}$ then
\[
d \fra^{ \lceil t p^{e_0 m + b} \rceil } \subseteq \fra^{\lceil t p^{e_0 m} \rceil + l (p^b-1) } \subseteq (\fra^{ \lceil t p^{e_0 m} \rceil })^{[p^b]}.
\]
where again the $l$ is an upper bound for the number of generators of $\fra$ (note that $d$ works for any $b \leq e_0 - 1$).  Set then $G = \textnormal{div}(d) + p^{e_0} H$ and notice that
\[
\begin{array}{rl}
& \Tr^b F_*^b \fra^{\lceil t p^{e_0 m + b} \rceil} \cO_X ( \lceil K_X - p^{e_0 m + b}(K_X + \Delta) - H - G \rceil ) \\
\subseteq & \Tr^b F_*^b  \big( d \fra^{\lceil t p^{e_0 m + b} \rceil} \cO_X ( \lceil K_X - p^{e_0 m + b}(K_X + \Delta) \rceil ) \otimes \cO_X(-p^b H) \big) \\
\subseteq & \Tr^b F_*^b  \big( (\fra^{ \lceil t p^{e_0 m} \rceil })^{[p^b]} \cO_X ( \lceil K_X - p^{e_0 m + b}(K_X + \Delta ) \rceil ) \otimes \cO_X(-p^b H) \big) \\
\subseteq &  \fra^{ \lceil t p^{e_0 m} \rceil } \cO_X ( \lceil K_X - p^{e_0 m}(K_X + \Delta ) \rceil ) \otimes \cO_X(-H) \\
\subseteq & \fra^{ \lceil t p^{e_0 m} \rceil } \cO_X ( \lceil K_X - p^{e_0 m}(K_X + \Delta ) - H\rceil )
\end{array}
\]
Now applying $\Tr^{e_0 m} F_*^{e_0 m}$ proves the claim.
\end{proof}
Now we return to the proof of the lemma.
The claim and our previous work implies that for a sufficiently large Cartier divisor $H \geq H_0$ and $G$ depending on $H$, we have that
\[
\begin{array}{rl}
\tau(X,\Delta, \ba^t) = & \sum_{e \geq e_0} \Tr^e F_*^e \fra^{\lceil t p^e \rceil} \cO_X ( \lceil K_X - p^e(K_X + \Delta) - H - G \rceil ) \\
\subseteq & \sum_{e_0 | e} \Tr^e F_*^e  \fra^{\lceil t p^e \rceil} \cO_X ( \lceil K_X - p^e(K_X + \Delta ) - H \rceil )\\
\subseteq & \sum_{e \geq e_0} \Tr^e F_*^e  \fra^{\lceil t p^e \rceil} \cO_X ( \lceil K_X - p^e(K_X + \Delta ) - H \rceil ) \\
= & \tau(X, \Delta, \fra^t)
\end{array}
\]
and therefore that $\tau(X, \Delta, \fra^t) = \sum_{e_0 | e} \Tr^e F_*^e  \fra^{\lceil t p^e \rceil} \cO_X ( \lceil K_X - p^e(K_X + \Delta) - H \rceil )$.

Pick a Cartier divisor $H'$ so that $H' \geq H_0 + \Delta + \lceil t \rceil \textnormal{div}(d) \geq H_0$ where $d \in \fra$.  We have that
\begin{eqnarray*}
\tau(X, \Delta, \fra^t) &=& \sum_{e_0|e} \Tr^e F_*^e \fra^{\lceil tp^e \rceil} \cO_X ( \lceil K_X - p^e(K_X + \Delta) - H' \rceil ) \\
&\subseteq & \sum_{e_0|e} \Tr^e F_*^e \fra^{\lceil t(p^e - 1) \rceil}\cO_X ( \lceil K_X - p^e(K_X + \Delta ) + \Delta - H' \rceil ) \\
& = &\sum_{e_0|e} \Tr^e F_*^e\fra^{\lceil t(p^e - 1) \rceil} \cO_X ( \lceil (1-p^e) (K_X + \Delta ) - H' \rceil )\\
& \subseteq & \sum_{e_0|e} \Tr^e F_*^e\fra^{\lceil t(p^e - 1) \rceil} \cO_X ( \lceil (1-p^e) (K_X + \Delta ) - H_0 - \Delta - \lceil t \rceil \textnormal{div}(d) \rceil ) \\
& \subseteq & \sum_{e_0|e} \Tr^e F_*^e\fra^{\lceil t(p^e - 1) \rceil + \lceil t \rceil} \cO_X ( \lceil K_X - p^e(K_X + \Delta) - H_0 \rceil ) \\
& \subseteq & \sum_{e_0|e} \Tr^e F_*^e\fra^{\lceil tp^e \rceil} \cO_X ( \lceil K_X - p^e(K_X + \Delta) - H_0 \rceil ) \\
& \subseteq & \tau(X, \Delta, \fra^t).
\end{eqnarray*}
In particular
\[
\tau(X, \Delta, \fra^t) = \sum_{e_0|e} \Tr^e F_*^e\fra^{\lceil t(p^e - 1) \rceil} \cO_X ( \lceil (1-p^e) (K_X + \Delta ) - H' \rceil ).
\]
Since the Weil-index of $\Delta$ is not divisible by $p$, $(1-p^e) (K_x + \Delta + t D)$ is an integral divisor for $e$ sufficiently divisible.  Hence by choosing our $e_0$ sufficiently divisible and noting that our scheme is Noetherian and so the above sum is finite, we obtain our desired result.
\end{proof}

\begin{rmk}
While it is certainly possible to generalize this to handle $t \in \RR$ or to handle $\Delta$ such that $(p^e - 1)(K_X + \Delta)$ is not integral, those generalizations are not the ones we need.  In particular we will need a power of $K_X + \Delta$ times a locally free sheaf.
\end{rmk}

\begin{thm}
\label{thm.GGofNonPrincipalMult}
Suppose $X$ is normal and projective, $\Rscr(X,-K_X-\Delta)$ is finitely generated and $\Delta$ has Weil-index not divisible by $p$ and fix $t > 0$.
There exists a Cartier divisor\footnote{See \autoref{rem.EffectiveDivisorForVanishing} for a discussion of how to choose $L$ effectively.} $L$
such that
$$
\tau(X,\Delta, \fra^w)\otimes\cO_X(L)
$$
is globally generated when $0\leq w=a/b\leq t$, with $p$ not dividing $b$.
\end{thm}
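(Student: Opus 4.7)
The plan is to mimic the Mustata-type proof of global generation for test ideals in the $\bQ$-Gorenstein setting, replacing the usual Cartier Fujita vanishing with the Fujita vanishing for ample Weil divisors (Chiecchio--Urbinati Corollary 4.2). First I would apply \autoref{lem:FiniteSumNonPrinc} to express $\tau(X, \Delta, \fra^w)$, for each $w = a/b \in [0, t] \cap \bQ$ with $p \nmid b$, as the image of a natural map from a finite direct sum $\bigoplus_i F^{e_i}_*(\cdots)$ to $\O_X$. Tensoring with $\O_X(L)$ and using the projection formula, together with the fact that the symmetric-power factor $\Sym^{w(p^{e_i}-1)}\langle x_1, \ldots, x_m\rangle$ is just a $k$-vector space, reduces the desired global generation to showing that
\[
F^{e_i}_* \O_X\bigl(-w(p^{e_i}-1)G + (1-p^{e_i})(K_X+\Delta) - H + p^{e_i} L\bigr)
\]
is globally generated for each $e_i$ in the finite set, where $G$ denotes a Cartier divisor representing $\cG$ (chosen effective and very ample).

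Second, I would establish this via Castelnuovo--Mumford regularity with respect to a fixed very ample Cartier divisor $M$: because $F^{e_i}$ is a finite (hence affine) morphism, projection formula plus cohomology-preservation reduces the problem to proving
\[
H^j\bigl(X, \O_X\bigl((K_X + \Delta + wG - H) + p^{e_i}(L - K_X - \Delta - wG - jM)\bigr)\bigr) = 0
\]
for $j = 1, \ldots, \dim X$. By \autoref{lem:FiniteGenNonPrinc}, since $\sR(X, -K_X-\Delta)$ is finitely generated, there is a Cartier $L_0$ such that $A := L_0 - K_X - \Delta$ is ample Weil. Setting $L := L_0 + N$ with $N$ a sufficiently positive Cartier divisor --- specifically, such that $N - tG - (\dim X) M$ is globally generated Cartier --- the divisor $L - K_X - \Delta - wG - jM = A + (N - wG - jM)$ is ample Weil for all $w \in [0, t]$ and $j \in [1, \dim X]$ by the yoga ``ample Weil plus globally generated $\bQ$-Cartier is ample Weil'' (Lemma 2.18(i) of Chiecchio--Urbinati). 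Applying the Fujita vanishing theorem for ample Weil divisors to a fixed locally free sheaf dominating $\O_X(K_X + \Delta + wG - H)$ and to a fixed ample Weil divisor $A_0$ absorbing the $(t-w)G + (\dim X - j)M$ perturbations as nef Cartier corrections then yields the desired vanishing for $p^{e_i}$ sufficiently large; this is compatible with \autoref{lem:FiniteSumNonPrinc}, which permits choosing the $e_i$'s arbitrarily large.

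The main obstacle is the uniformity in $w$: both the ``fixed-part'' sheaf $\O_X(K_X + \Delta + wG - H)$ and the ample Weil divisor $L - K_X - \Delta - wG - jM$ appearing in the Fujita decomposition depend on $w$, and a priori Fujita's divisibility constant $m(A,\sF)$ depends on those data. This is handled by exploiting that $w \leq t$ is bounded: the varying rank-one reflexive sheaf is contained in a fixed reflexive sheaf (using effectivity of $G$), and the varying ample Weil divisor splits off a fixed ample Weil plus a globally generated $\bQ$-Cartier correction of bounded size. Replacing $A_0$ by a suitable integer multiple absorbs the divisibility requirement, and the remaining freedom to enlarge $e_i$ completes the argument.
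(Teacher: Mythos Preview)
Your overall strategy matches the paper's: express the test ideal as a finite sum of trace images via \autoref{lem:FiniteSumNonPrinc}, reduce global generation to a Castelnuovo--Mumford regularity computation, and invoke the Fujita vanishing for ample Weil divisors from Chiecchio--Urbinati. However, there is a genuine gap in how you propose to apply that vanishing.

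You decompose the relevant divisor as $(K_X + \Delta + wG - H) + p^{e_i}(L - K_X - \Delta - wG - jM)$ and then propose to apply Fujita with $\sF$ a locally free sheaf \emph{dominating} the reflexive sheaf $\O_X(K_X + \Delta + wG - H)$. But an inclusion $\sM \hookrightarrow \sF$ does not transfer vanishing of $H^j(\sF \otimes \sL)$ to $H^j(\sM \otimes \sL)$; and more fundamentally, since $K_X + \Delta + wG - H$ is only a Weil $\bQ$-divisor, one cannot even write the sheaf in question as a tensor product $\O_X(K_X + \Delta + wG - H) \otimes \O_X\bigl(p^{e_i}(\cdots)\bigr)$ in any useful way. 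The Chiecchio--Urbinati statement requires the locally free factor $\sF$ to split off genuinely as a tensor factor, which it does not here.

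The paper's fix is to regroup so that the part left over after the ample-Weil multiple is \emph{Cartier and nef}, allowing one to take $\sF = \O_X$ throughout. Concretely, the paper sets $L = L_1 + (d+1)A$ with $A$ a globally generated ample Cartier divisor and $d = \dim X$; the cohomology group to kill then becomes
\[
H^i\Bigl(X,\, \O_X\bigl((p^e - 1)(L_1 - K_X - \Delta - wG) \;+\; \underbrace{L_1 - H + p^e(d+1-i)A}_{\text{Cartier, and nef for } e \gg 0}\bigr)\Bigr),
\]
to which Fujita vanishing applies directly with $\sF = \O_X$. Your own divisor admits the analogous regrouping
\[
(p^{e_i}-1)(L - K_X - \Delta - wG - jM) \;+\; (L - H - jM),
\]
and this \emph{would} work provided you also require $N$ large enough that $L_0 + N - H - (\dim X)M$ is nef --- but that is not what you wrote, and the ``dominating locally free sheaf'' manoeuvre should be dropped.
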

\begin{proof}   Choose a line bundle $\cG = \cO_X(G)$ such that $\fra \otimes \cG$ is globally generated by sections $x_1, \ldots, x_m \in H^0(X, \fra \otimes \cG)$.  By Lemma~\ref{lem:FiniteSumNonPrinc} there is a Cartier divisor $H$ and integers $e_1, \ldots, e_s \gg 0 $ such that the test ideal $\tau(X, \Delta, \fra^w)$ is equal to
\[
\sum_{i=1}^s  \Image\left(F_*^{e_i} \Sym^c(x_1, \ldots, x_m) \otimes_k \cG^{-w(p^{e_i}-1)} \otimes  \cO_X( (1-p^{e_i}) (K_X+\Delta) - H) \xrightarrow{T^{e_i}} \cO_X\right)
 \]
 which is globally generated if each summand is. Fix now $A$ a globally generated ample Cartier divisor.  We claim it suffices to find a Cartier divisor $L_1$, such that
 \[
 \begin{array}{rl}
 & \big( F_*^{e_i} \cO_X( (1-p^{e_i}) (K_X+\Delta + w G) - H) \big) \otimes \cO_X(L_1 + (d+1)A) \\
 = & F_*^{e_i}\cO_X( (p^{e_i} - 1) (L_1 - \Delta - K_X - w G) + L_1 - H + p^{e_i}(d+1)A)
 \end{array}
 \]
  is globally generated (the equality in the displayed equation follows from the projection formula).  Indeed, assuming this global generation choose $L = L_1 + (d+1) A$ with $d = \dim X$ and note that the image of a globally generated sheaf is still globally generated.  We will find a single $L_1$ that works for all $0 \leq w \leq t$.

Since $\Rscr(X, -\Delta - K_X)$ is finitely generated, we can use Lemma~\ref{lem:FiniteGenNonPrinc} to find a Cartier divisor $M$ so that $(M - \Delta - K_X)$ is an ample Weil divisor. Moreover, we can find an ample Cartier divisor $N$ such that $N - tG$ is ample. Notice that, for all $0\leq w \leq t$,
\[
N - w G = \left(1-\frac{w}{t} \right) N + \frac{w}{t} (N - t G) \text{ is ample.}
 \]
 This observation is what lets us replace $tG$ with $wG$.
 Set $L_1 := M + N$. By \cite[Lem. 2.18(i)]{ChiecchioUrbinatiAmpleWeilDivisors}, $L_1 - K_X - \Delta - wG = (M - K_X - \Delta) + (N - wG)$ is an ample Weil divisor.

Fix $0 \leq w \leq t$. We now show that the \CM{} regularity, with respect to $A$, of
\[
F_*^e\cO_X( (p^e - 1) (L_1 - \Delta - K_X - wG) + L_1 - H+p^e(d+1)A)
 \]
 is zero for each $e = e_i \gg 0$, which guarantees by Mumford's theorem \cite[Thm. 1.8.3]{LazarsfeldPositivity1} the desired global generation. It suffices now to show that
\begin{equation}
\label{eq.DesiredVanishingForRegularity}
H^i( X, \cO_X(-i A) \otimes_{\cO_X} F^e_* \cO_X( (p^e - 1) ( L_1 - \Delta - K_X - wG) + L_1 - H + p^e (d+1) A) ) = 0
 \end{equation}
 which by the projection formula and the fact that $F^e_*$ doesn't change the underlying sheaves of Abelian groups, is the same as showing
 \[
 H^i ( X, \cO_X( (p^e - 1) (L_1 - \Delta - K_X - wG) + L_1 - H + p^e (d+1-i) A )) =0
 \]
for $0 < i \leq d$ and $d = \dim X$.
Since we may assume that $e \gg 0$, $ L_1 - H + p^e (d+1-i) A$ is nef.  Therefore because $L_1 - \Delta - K_X - wG$ is ample Weil and $L  - H + p^e (d+1-i) A$ is nef and Cartier, we may apply the version of Fujita vanishing \cite[Thm. 4.1]{ChiecchioUrbinatiAmpleWeilDivisors} to obtain the vanishing desired in \autoref{eq.DesiredVanishingForRegularity}.  This completes the proof.
\end{proof}

\begin{remark}
\label{rem.EffectiveDivisorForVanishing}
Indeed, it is not hard to choose $L$ effectively.  Summarizing the proof above, fix an ample Cartier $G$ so that $\mathfrak{a} \otimes \O_X(G)$ is globally generated, fix $A$ to be a globally generated ample Cartier divisor, and fix $M$ Cartier so that $M -\Delta-K_X$ is ample and choose an ample Cartier $N$ so that $N-tG$ is ample.  Then we can take
\[
L = (d+a)A + M + N.
\]
\end{remark}
We now turn to the promised results on discreteness and rationality.

\begin{prop}
\label{prop.DiscretenessOfFJumpingNumbersViaGlobal}
Suppose now that $X$ is normal and projective and $\Rscr(X,-K_X-\Delta)$ is finitely generated.  Then for any ideal sheaf $\fra$ on $X$, the jumping numbers of $\tau(X, \Delta, \fra^t)$ are without limit points.
\end{prop}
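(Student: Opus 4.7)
The plan is to combine the global generation result \autoref{thm.GGofNonPrincipalMult} with monotonicity of test ideals and finite-dimensionality of spaces of global sections on a projective variety. The basic observation at the heart of the argument is that if an ideal $J \subseteq \O_X$ has $J \otimes \O_X(L)$ globally generated, then $J \otimes \O_X(L)$ is recovered as the image sheaf of the evaluation map $H^0(X, J \otimes \O_X(L)) \otimes_k \O_X \to \O_X(L)$, so that $J$ is uniquely determined by the vector subspace $H^0(X, J \otimes \O_X(L)) \subseteq H^0(X, \O_X(L))$.

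First I would handle the Weil-index hypothesis required by \autoref{thm.GGofNonPrincipalMult}. Mirroring the reduction used in the proof of \autoref{thm.DiscAndRatOfFjumping}, I would pass to a finite separable cover $\nu \colon X' \to X$ of normal projective varieties so that $\nu^* \Delta$ has Weil-index coprime to $p$ (possible by extracting roots of local uniformizers via Artin--Schreier-type equations). Using the trace formula $\Tr\bigl(\nu_* \tau(X', \nu^* \Delta - \Ram_\nu, (\fra \O_{X'})^t)\bigr) = \tau(X, \Delta, \fra^t)$ from \cite{SchwedeTuckerTestIdealFiniteMaps}, together with \autoref{lem.BasicTestIdealProperties}(d) to absorb a Cartier divisor making $\nu^*\Delta - \Ram_\nu$ effective and \autoref{lem.BasicPropertiesOfLocalSectionAlgebras}(b) to preserve finite generation of the anti-log-canonical algebra on $X'$, it suffices to prove the proposition assuming $\Delta$ itself has Weil-index coprime to $p$.

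Under this assumption, fix an arbitrary $t > 0$ and invoke \autoref{thm.GGofNonPrincipalMult} to produce a single Cartier divisor $L$ (independent of $w$) such that $\tau(X, \Delta, \fra^w) \otimes \O_X(L)$ is globally generated for every $w = a/b \in [0, t]$ with $p \nmid b$. By monotonicity from \autoref{lem.BasicTestIdealProperties}(b), the family $\{\tau(X, \Delta, \fra^w)\}_w$ is nonincreasing in $w$, so the subspaces $V_w := H^0(X, \tau(X, \Delta, \fra^w) \otimes \O_X(L)) \subseteq H^0(X, \O_X(L))$ form a descending chain inside a finite-dimensional $k$-vector space, and therefore attain only finitely many distinct values as $w$ varies over such rationals in $[0, t]$. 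By the observation above, the ideals $\tau(X, \Delta, \fra^w)$ themselves attain only finitely many values on this set. Finally, the right-continuity statement \autoref{lem.BasicTestIdealProperties}(c), together with the density of rationals of denominator coprime to $p$ in $\bR$, upgrades this to finiteness on all of $[0, t]$; hence the $F$-jumping numbers in $[0, t]$ are finite in number, and since $t$ is arbitrary there are no limit points.

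The main obstacle will be the first step: the careful bookkeeping needed to ensure that the reduction to Weil-index coprime to $p$ preserves every hypothesis simultaneously --- effectiveness after adding a Cartier divisor, projectivity of $X'$, and finite generation of $\sR(-K_{X'} - \nu^*\Delta + \Ram_\nu)$. Once these technicalities are in place, the finite-dimensionality argument of the third paragraph is essentially formal.
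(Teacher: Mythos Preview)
Your proposal is correct, and the core of the argument (global generation via \autoref{thm.GGofNonPrincipalMult}, then descending subspaces of the finite-dimensional $H^0(X,\O_X(L))$, then right-continuity to pass from the dense set of good rationals to all of $[0,t]$) is exactly what the paper does.

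Where you diverge is in the reduction to the case that $\Delta$ has Weil-index coprime to~$p$. You recycle the finite-separable-cover trick from the proof of \autoref{thm.DiscAndRatOfFjumping}: pass to $\nu\colon X'\to X$, replace $\Delta$ by $\nu^*\Delta-\Ram_\nu$ plus a Cartier correction, and use the trace formula from \cite{SchwedeTuckerTestIdealFiniteMaps} together with \autoref{lem.BasicPropertiesOfLocalSectionAlgebras}(b). The paper instead stays on $X$ and uses the Frobenius itself as the ``cover'': it takes the map $T\colon F^d_*\cK(X)\to\cK(X)$ with associated divisor $\mathcal{R}_T\sim(1-p^d)K_X$, invokes the formula $T\bigl(\tau(X,\,p^d\Delta-\mathcal{R}_T,\,(\fra^{[p^d]})^t)\bigr)=\tau(X,\Delta,\fra^t)$ from \cite[Theorem~6.25]{SchwedeTuckerTestIdealFiniteMaps}, and observes that $-K_X-(p^d\Delta-\mathcal{R}_T+p^dG)\sim p^d(-K_X-\Delta-G)$ so finite generation is inherited as a Veronese. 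Your route has the virtue of reusing machinery already set up in Section~\ref{sec.StabilizationViaRees} and keeping the ideal $\fra\O_{X'}$ rather than the Frobenius power $\fra^{[p^d]}$; the paper's route avoids constructing a global projective cover with prescribed ramification and keeps the argument entirely on~$X$, which fits the spirit of Section~\ref{sec.StablizationViaPositivity} as an independent approach.
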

\begin{proof}
First assume that $\Delta$ has Weil-index not divisible by $p$.  It follows from an appropriately generalized version of the argument of \cite[Lemma 3.23]{BlickleSchwedeTakagiZhang} that $\tau(X, \Delta, \fra^t) = \tau(X, \Delta, \fra^{t+\varepsilon})$ for all $0 < \varepsilon \ll 1$.  Hence for every real number $t \in R_{\geq 0}$, there is a rational number $w = a/b$ with $p$ not dividing $p$ with $\tau(X, \Delta, \fra^t) = \tau(X, \Delta, \fra^w)$.   Now fix $t_0 > 0$.  It follows from Theorem \ref{thm.GGofNonPrincipalMult} that there exists a Cartier divisor $L$ such that $\tau(X, \Delta, \fra^w) \otimes \cO_X(L)$ is globally generated for every $w < t_0$ with $w = a/b$ and where $p$ does not divide $b$.  But then by our previous discussion, we also see that $\tau(X, \Delta, \fra^t) \otimes \cO_X(L)$ is globally generated for every $t < t_0$.  The discreteness follows since now for $0 \leq t < t_0$, $H^0(X, \tau(X, \Delta, \fra^t) \otimes \cO_X(L)) \subseteq H^0(X, \cO_X(L))$ form a decreasing chain of subspaces of a finite dimensional vector space $H^0(X, \cO_X(L))$, and of course by the global generation hypothesis, if $H^0(X, \tau(X, \Delta, \fra^{t_2}) \otimes \cO_X(L)) = H^0(X, \tau(X, \Delta, \fra^{t_1}) \otimes \cO_X(L))$ then $\tau(X, \Delta, \fra^{t_2}) = \tau(X, \Delta, \fra^{t_1})$.  This proves the result when $\Delta$ has Weil-index not divisible by $p$.

Next assume that $p^d \Delta$ has Weil-index divisible by $p$.  Fix a map $F^d_* \O_X(K_X) \to \O_X(K_X)$ inducing a map on the fraction fields $T : F^d_* \cK(X) \to \cK(X)$.  As in \cite[Theorem 6.25]{SchwedeTuckerTestIdealFiniteMaps}, this map induces a possibly non-effective Weil divisor $\mathcal{R}_{T} \sim (1-p^d) K_X$ with
\begin{equation}
\label{eq.TOfTauIsTau}
T \big(\tau(X, p^d \Delta - \mathcal{R}_T, (\fra^{[p^d]})^t) \big) = \tau(X, \Delta, \fra^t)
\end{equation}
Choose a Cartier divisor $G$ so that $p^d \Delta - \mathcal{R}_T + p^d G$ is effective and notice that it also has Weil-index not divisible by $p$.  Next observe that
\[
\begin{array}{rl}
& -K_X - p^d \Delta + \mathcal{R}_T  - p^d G \\
\sim & -K_X -p^d \Delta + (1-p^d)K_X - p^d G \\
= & -p^d (K_X + \Delta) - p^d G \\
= & F^{d*}(-K_X - \Delta - G)
\end{array}
\]
and hence that $\Rscr(X, -K_X-p^d \Delta + \mathcal{R}_T - p^d G)$ is finitely generated (note that the $-G$ is Cartier and thus harmless so we are really taking the $p^d$th Veronese of $\Rscr(X,-K_X-\Delta)$.  Hence by what we have already shown, the $F$-jumping numbers of $\tau(X, p^d \Delta - \mathcal{R}_T, (\fra^{[p^d]})^t)$ have no limit points.  Therefore by applying $T$ via \eqref{eq.TOfTauIsTau}, we see that the $F$-jumping numbers of $\tau(X, \Delta+G, \fra^t)$ also have no limit points.  But then by \cite[3.26]{BlickleSchwedeTakagiZhang} the $F$-jumping numbers of $\tau(X, \Delta, \fra^t)$ have no limit points proving the theorem.
\end{proof}

\subsection{Global generation and stabilization of $\sigma$}



We now give another proof of \autoref{cor.HartshorneSpeiserStabilization} in the projective setting.

\begin{thm}
\label{thm.GlobalizedSigmaStabilization}
Suppose that $(X, \Delta)$ is a projective pair such that $\Rscr(X,-K_X-\Delta)$ is finitely generated and $K_X + \Delta$ has Weil-index not divisible by $p$.  Then the images
\[
\Image\big( F^e_* \cO_X( (1-p^e)(K_X + \Delta))  \to \cO_X \big)
\]
stabilize for $e$ sufficiently large and divisible.  We use $\sigma(X, \Delta)$ to denote this stable image.
\end{thm}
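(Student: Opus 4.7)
The plan is to combine the global-generation technique of Theorem \ref{thm.GGofNonPrincipalMult} with the finite-dimensional stabilization argument of Proposition \ref{prop.DiscretenessOfFJumpingNumbersViaGlobal}. First I would fix $e_0 > 0$ so that $(p^{e_0}-1)(K_X+\Delta)$ is an integral Weil divisor (possible since the Weil-index is coprime to $p$), restrict attention to $e \in e_0 \bN$, and write $\sigma_{ne_0}$ for the image of the trace $F^{ne_0}_* \O_X((1-p^{ne_0})(K_X+\Delta)) \to \O_X$. The first task is to establish the descending chain $\sigma_{(n+1)e_0} \subseteq \sigma_{ne_0}$, by factoring the level-$(n+1)e_0$ trace through the level-$ne_0$ trace; this factorization holds in codimension one, where $\O_X$ is a DVR and $K_X + \Delta$ is trivially $\QQ$-Cartier, and it extends globally by reflexivity.

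Next I would establish uniform global generation: there exists a Cartier divisor $L$ such that $\sigma_{ne_0} \otimes \O_X(L)$ is globally generated for all sufficiently divisible $n \gg 0$. Using finite generation of $\sR(X, -K_X-\Delta)$, Lemma \ref{lem:FiniteGenNonPrinc} provides a Cartier divisor $M$ with $M - K_X - \Delta$ ample Weil. Fix an ample, globally generated Cartier $A$ and set $L := M + (d+1)A$ with $d = \dim X$. By the projection formula,
\[
F^{ne_0}_* \O_X((1-p^{ne_0})(K_X+\Delta)) \otimes \O_X(L) \cong F^{ne_0}_* \O_X\big((p^{ne_0}-1)(M-K_X-\Delta) + p^{ne_0}(d+1)A + M\big).
\]
Mumford's regularity criterion \cite[Thm. 1.8.3]{LazarsfeldPositivity1} reduces global generation of this sheaf to verifying the vanishings
\[
H^i\big(X, \O_X(M) \otimes \O_X((p^{ne_0}-1)(M-K_X-\Delta) + p^{ne_0}(d+1-i)A)\big) = 0
\]
for $1 \leq i \leq d$; these follow from Fujita vanishing for locally free sheaves \cite[Cor. 4.2]{ChiecchioUrbinatiAmpleWeilDivisors}, applied with ample Weil $M-K_X-\Delta$, nef Cartier $p^{ne_0}(d+1-i)A$, and locally free $\O_X(M)$, provided $p^{ne_0}-1$ is sufficiently divisible (arrangeable for $n$ divisible enough since the Weil-index of $K_X + \Delta$ is coprime to $p$). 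Global generation of the displayed sheaf passes to its quotient $\sigma_{ne_0} \otimes \O_X(L)$.

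To conclude, the sections $H^0(X, \sigma_{ne_0} \otimes \O_X(L))$ form a descending chain of subspaces of the finite-dimensional vector space $H^0(X, \O_X(L))$, so they stabilize; global generation then forces the ideal sheaves $\sigma_{ne_0}$ themselves to stabilize. The main conceptual obstacle I anticipate is the descending-chain step, because the non-$\QQ$-Cartier nature of $K_X + \Delta$ requires the trace-factorization to be argued at generic points of prime divisors and extended by reflexivity; the subsequent positivity and vanishing steps are essentially direct adaptations of the preceding global-generation and stabilization arguments in the paper.
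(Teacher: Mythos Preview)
Your proposal is correct and follows essentially the same route as the paper: use Lemma~\ref{lem:FiniteGenNonPrinc} to produce a Cartier divisor $M$ with $M-K_X-\Delta$ ample Weil, twist by $M$ plus a multiple of a globally generated ample $A$, verify $0$-regularity via Fujita vanishing for ample Weil divisors, and then conclude by the descending-chain plus finite-dimensional $H^0$ argument. The paper's proof uses the slightly sharper twist $dA+L$ (their $L$ is your $M$) rather than $(d+1)A+M$, and is terser about both the descending chain (which it simply asserts) and the regularity computation; your explicit codimension-one-plus-reflexivity justification for the chain is a reasonable way to fill in what the paper leaves implicit.
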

\begin{proof}
Choose a globally generated ample Cartier divisor $A$ and a Cartier divisor $L$ such that $L - K_X - \Delta$ is an ample Weil divisor by \ref{lem:FiniteGenNonPrinc}.  For each $e$ such that $(p^e - 1)(K_X + \Delta)$ is integral, set
\[
\sigma_e(X, \Delta) = \Image\big( F^e_* \cO_X( (1-p^e)(K_X + \Delta))  \to \cO_X \big).
\]
Then fixing $d = \dim X$
\[
\sigma_e(X, \Delta) \otimes \cO_X(dA + L) = \Image\left( F^e_* \cO_X( p^e dA + L + (p^e-1)(L - K_X - \Delta))  \to \cO_X \right).
\]
We immediately notice that $F^e_* \cO_X( p^e dA + L + (p^e-1)(L - K_X - \Delta))$ is $0$-regular with respect to $A$ and hence its image $\sigma_e(X, \Delta) \otimes \cO_X(dA + L)$ is globally generated.  As the global generating sections all lie in $H^0(X, \cO_X(dA + L))$ which is finite dimensional, and as the $\sigma_e$ form a descending chain of ideals as $e$ increases, we see that $\sigma_e$ stabilizes for $e$ sufficiently large and divisible as claimed.
\end{proof}

As an immediate corollary of the proof, we obtain:

\begin{cor}
Suppose again that $(X, \Delta)$ is a projective pair of dimension $d$ such that $\Rscr(X,-K_X-\Delta)$ is finitely generated and that $K_X + \Delta$ has Weil-index not divisible by $p$.
If $L$ is a Cartier divisor such that $L - K_X - \Delta$ is an ample Weil divisor and if $A$ is a globally generated ample Cartier divisor, then $\sigma(X, \Delta) \otimes \cO_X(dA + L)$ is globally generated.
\end{cor}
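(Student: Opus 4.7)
The plan is to observe that this corollary is essentially a restatement of a fact established inside the proof of \autoref{thm.GlobalizedSigmaStabilization}, with the stabilization bolted on at the end. So the strategy is to re-run that argument but stop at the global generation statement rather than pushing through to stabilization.

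More concretely, first I would invoke \autoref{thm.GlobalizedSigmaStabilization} itself to get that $\sigma(X,\Delta) = \sigma_e(X,\Delta)$ for every $e$ sufficiently large and divisible. So it suffices to prove the global generation claim for one such $e$ (and in fact for every large enough $e$), namely that
\[
\sigma_e(X,\Delta) \otimes \cO_X(dA+L) \text{ is globally generated.}
\]
For this, apply the projection formula to rewrite
\[
\sigma_e(X,\Delta) \otimes \cO_X(dA+L) = \Image\bigl( F^e_* \cO_X\bigl( p^e dA + L + (p^e-1)(L-K_X-\Delta)\bigr) \to \cO_X(dA+L)\bigr),
\]
exactly as in the proof of the theorem.

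The key point is then the Castelnuovo--Mumford regularity computation: the sheaf $F^e_* \cO_X( p^e dA + L + (p^e-1)(L-K_X-\Delta))$ is $0$-regular with respect to the globally generated ample $A$. This reduces to showing the vanishings
\[
H^i\bigl(X, \cO_X(-iA)\otimes F^e_* \cO_X(p^e d A + L + (p^e-1)(L-K_X-\Delta))\bigr) = 0
\]
for $0 < i \le d$, which after the projection formula becomes a vanishing for $\cO_X\bigl( p^e(d-i)A + L + (p^e-1)(L-K_X-\Delta)\bigr)$. Since $L-K_X-\Delta$ is ample Weil, $A$ is Cartier and nef, and $L$ is Cartier, this is exactly the setting of the Fujita vanishing theorem for Weil divisors, \cite[Thm.~4.1]{ChiecchioUrbinatiAmpleWeilDivisors} (this is where the finite generation hypothesis on $\Rscr(X,-K_X-\Delta)$ enters, since that is what allows us to make $L-K_X-\Delta$ ample Weil in the first place, cf.~\autoref{lem:FiniteGenNonPrinc}). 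Mumford's theorem \cite[Thm.~1.8.3]{LazarsfeldPositivity1} then gives the global generation of the source, and hence of its image $\sigma_e(X,\Delta)\otimes \cO_X(dA+L)$.

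The only thing one has to be mildly careful about is whether the $e$ producing the stable image $\sigma(X,\Delta)$ via \autoref{thm.GlobalizedSigmaStabilization} can be chosen simultaneously large enough for the Fujita vanishing to kick in; but since both conditions only require $e$ sufficiently large and divisible, one simply takes the maximum. There is no real obstacle here — the corollary is genuinely immediate once one notices that the proof of the theorem produced the global generation as an intermediate step before taking a descending chain in $H^0(X,\cO_X(dA+L))$.
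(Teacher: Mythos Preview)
Your proposal is correct and follows essentially the same approach as the paper, which simply records the corollary as ``an immediate corollary of the proof'' of \autoref{thm.GlobalizedSigmaStabilization}: the global generation of $\sigma_e(X,\Delta)\otimes\cO_X(dA+L)$ was already established there via $0$-regularity, and one then just observes that $\sigma(X,\Delta)=\sigma_e(X,\Delta)$ for $e$ sufficiently large and divisible. You spell out more carefully than the paper the Fujita vanishing step and the need for $e$ large, but this is only added precision, not a different route.
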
 



\section{Alterations}
\label{sec.Alterations}

In this section we give a description of the test ideal $\tau(X, \Delta, \ba^t)$ under the assumption that $\sR(-K_X-\Delta)$ is finitely generated.  This generalizes \cite{BlickleSchwedeTuckerTestAlterations} from the case that $-K_X-\Delta$ is $\bQ$-Cartier.  As a consequence we obtain a generalization of a result of A.~K.~Singh's \cite{SinghSplintersFRegFG} \footnote{This result was announced years ago, but has not been distributed.}, also compare with \cite{SinghQGorensteinSplinters}.

Before starting in on this, let us fix notation and recall the following from \autoref{sec.Preliminaries}.
\begin{setting}
\label{set.SectionAlterations}
Suppose that $\Delta \geq 0$ is a $\bQ$-divisor on an $F$-finite normal scheme $X$, $\sR(-K_X-\Delta)$ is finitely generated with $X' = \Proj \sR(-K_X-\Delta) \xrightarrow{\mu} X$.  Suppose that $\fra$ is an ideal sheaf on $X$ and $t \geq 0$ is a real number.
\end{setting}

We have already seen that we can pullback $-K_X-\Delta$ to $X'$ by $\mu$ where it becomes a $\bQ$-Cartier divisor, see \autoref{lem.PullBackDivisorsOnSymbolicRees}.  Suppose further that $\pi : Y \to X$ is any alteration that factors through $X'$ as
\[
\xymatrix{
Y \ar@/_2pc/[rr]_{\pi} \ar[r]^{\psi} & X' \ar[r]^{\mu} & X. \\
& & \\
}
\]
Then we define $\pi^* (K_X +\Delta) = \psi^* \mu^* (K_X + \Delta)$, or equivalently we define $\pi^*$ is as in \cite{DeFernexHaconSingsOnNormal} (even though $\pi$ is not birational), see \autoref{subsect.pullbackWeildivisors}.
Recall, of course, that if $\pi : Y \to X$ is a small alteration (meaning that the non-finite-to-1 locus of $\pi$ has codimension $\geq 2$ in $Y$), then this coincides with the obvious pullback operation.  More generally, if $\pi : Y \to X$ is any alteration and $\rho : Y' \xrightarrow{\xi} Y \xrightarrow{\pi} X$ factors through both $\pi$ and $X' \xrightarrow{\mu} X$, and $\xi$ is birational, then we define $\pi^* (K_X + \Delta)$ to be $\xi_* \rho^*(-K_X-\Delta)$.

In the next lemma and later in the section, we use the notion and notation of parameter test modules $\tau(\omega_X, K_X + \Delta, \ba^t) := \tau(X, \Delta, \ba^t)$.  For a concise introduction and more about their relation to test ideals, please see \cite[Section 4]{SchwedeTuckerNonPrincipalIdeals}.

\begin{lemma}
\label{lem.TestIdealVsTestModuleForFGCanonical}
Working in \autoref{set.SectionAlterations}, if $m \in \bZ_{> 0}$ is such that $t m \in \bZ$, that $m \Delta$ is integral, and such that the $m$-th Veronese of the symbolic Rees algebra $\sR(X, -K_X-\Delta)$ is generated in degree $1$,
then
\[
\tau(X, \Delta, \ba^t) = \tau(\omega_X, (\O_X(-m(K_X + \Delta)) \ba^{tm})^{1 \over m}) = \tau(X, -K_X, (\O_X(-m(K_X + \Delta)) \ba^{tm})^{1 \over m}).
\]
\end{lemma}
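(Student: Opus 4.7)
My plan is to expand both sides using the ``test element'' form of \autoref{lem.TestIdealConstruction} and to compare them level-by-level, letting the Veronese hypothesis do the work of identifying tensor powers of $\O_X(-m(K_X+\Delta))$ with the corresponding divisorial sheaves. First I would fix a sufficiently large effective Cartier divisor $D$ coming from a common test element; the last formula of \autoref{lem.TestIdealConstruction} then rewrites the left-hand side as
\[
\tau(X,\Delta,\fra^t)=\sum_{e\geq e_0}\Tr^e F^e_*\bigl(\fra^{\lceil tp^e\rceil}\cdot \omega_X(-p^e(K_X+\Delta)-D)\bigr),
\]
and the analogous formula applied to the parameter test module with trivial boundary and formal coefficient
\[
\bigl(\O_X(-m(K_X+\Delta))\,\fra^{tm}\bigr)^{1/m} = \O_X(-m(K_X+\Delta))^{1/m}\cdot\fra^t
\]
rewrites the middle expression as
\[
\sum_{e\geq e_0}\Tr^e F^e_*\Bigl(\fra^{\lceil tp^e\rceil}\cdot \O_X(-m(K_X+\Delta))^{\lceil p^e/m\rceil}\cdot \omega_X(-D)\Bigr).
\]

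The crux of the argument would be the identification
\[
\O_X(-m(K_X+\Delta))^{\lceil p^e/m\rceil} = \O_X\bigl(-\lceil p^e/m\rceil\cdot m(K_X+\Delta)\bigr)
\]
as fractional ideals in the constant sheaf. This is exactly what the assumption that $\sR^{(m)}$ is generated in degree one supplies: surjectivity of the natural multiplication $\O_X(-m(K_X+\Delta))^{\otimes k}\twoheadrightarrow \O_X(-km(K_X+\Delta))$ for every $k\geq 1$ forces the plain product of the fractional ideal to coincide with its reflexive hull. Writing $r_e := \lceil p^e/m\rceil\cdot m-p^e \in \{0,1,\dots,m-1\}$, the two level-$e$ expressions then differ only by the uniformly bounded twist $r_e(K_X+\Delta)$, which I would absorb into $D$: enlarging $D$ by a Cartier divisor dominating $m(K_X+\Delta)$ forces the LHS at level $e$ into the RHS at the same level, and enlarging $D$ in the opposite direction yields the reverse containment. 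Since the test ideal and parameter test module are independent of the choice of sufficiently large $D$ by \autoref{lem.TestIdealConstruction}, both inclusions give the first equality of the lemma.

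For the second equality $\tau(\omega_X,\dots)=\tau(X,-K_X,\dots)$ I would simply invoke the paper's convention $\tau(\omega_X,K_X+\Delta',\cdot)=\tau(X,\Delta',\cdot)$ with $\Delta' = -K_X$: the divisor $K_X+(-K_X)=0$ makes the left-hand side into the parameter test module with trivial boundary, and the correspondence of \autoref{lem.MapsAndDivisorsBasics} confirms that the same $\phi$'s are counted on both sides.

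The main obstacle will be careful bookkeeping around the rounding $\lceil p^e/m\rceil$ and the verification that $r_e(K_X+\Delta)$ is absorbed into $D$ uniformly in $e$; the Veronese generated-in-degree-one hypothesis is the non-trivial input that makes the rounding cosmetic, by ruling out the usual gap between tensor powers and reflexive powers of rank-one fractional sheaves.
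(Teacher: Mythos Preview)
Your proposal is correct and follows essentially the same route as the paper: expand both sides with the trace formula of \autoref{lem.TestIdealConstruction}, use the degree-one Veronese hypothesis to identify $\O_X(-m(K_X+\Delta))^k$ with $\O_X(-km(K_X+\Delta))$, and absorb the resulting uniformly bounded discrepancies into the test-element divisor $D$ to get both containments. The paper carries out one extra bookkeeping step you glossed over: since $K_X$ is only a Weil divisor, passing from $\O_X(\lceil K_X - p^e(K_X+\Delta) - D\rceil)$ to the factored form $\O_X(K_X)\cdot\O_X(\lceil -p^e(K_X+\Delta)-D\rceil)$ requires a short sandwich argument (choose Cartier $D'\leq D''$ with $K_X-D'$ Cartier); this is exactly another instance of your ``enlarge $D$'' trick, so your outline already accommodates it.
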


\begin{proof}
We know by \autoref{lem.TestIdealConstruction} that for any sufficiently large Cartier $D \geq 0$ and any $e_0 \geq 0$ that
\[
\tau(X, \Delta, \ba^t) = \sum_{e \geq e_0} \Tr^e_X\Big(F^e_* \fra^{\lceil t p^e \rceil} \O_X( \lceil K_X - p^e(K_X + \Delta) - D \rceil)\Big).
\]
Choose $0 < D' \leq D''$ divisors such that $D''$ is Cartier and $K_X - D'$ is Cartier.  Since
\[
\begin{array}{rl}
& \tau(X, \Delta, \ba^t)\\
 = & \sum_{e \geq e_0} \Tr^e_X\Big(F^e_* \fra^{\lceil t p^e \rceil} \cdot \O_X( \lceil K_X - p^e(K_X + \Delta) - D - D'' \rceil)\Big)\\
 \subseteq & \sum_{e \geq e_0} \Tr^e_X\Big(F^e_* \fra^{\lceil t p^e \rceil} \cdot \O_X( \lceil K_X - D' - p^e(K_X + \Delta) - D  \rceil)\Big)\\
 = & \sum_{e \geq e_0} \Tr^e_X\Big(F^e_* \fra^{\lceil t p^e \rceil} \cdot \O_X( K_X - D') \cdot \O_X(\lceil - p^e(K_X + \Delta) - D \rceil)\Big)\\
   \subseteq & \sum_{e \geq e_0} \Tr^e_X\Big(F^e_* \fra^{\lceil t p^e \rceil} \cdot \O_X( K_X) \cdot \O_X(\lceil - p^e(K_X + \Delta) - D  \rceil)\Big)\\
 \subseteq & \sum_{e \geq e_0} \Tr^e_X\Big(F^e_* \fra^{\lceil t p^e \rceil} \cdot \O_X( \lceil K_X  - p^e(K_X + \Delta) - D  \rceil)\Big)\\
 = & \tau(X, \Delta, \ba^t)
\end{array}
\]
we see that $\tau(X, \Delta, \fra^t) = \sum_{e \geq e_0} \Tr^e_X\Big(F^e_* \fra^{\lceil t p^e \rceil} \cdot \O_X( K_X) \cdot \O_X(\lceil - p^e(K_X + \Delta) - D  \rceil)\Big)$.

This is already very close.
\begin{claim} We can choose a Cartier $D_3 > 0$ so that
\[
\O_X(-D_3) \cdot \fra^{\lceil tp^e \rceil} \cdot \O_X(\lceil -p^e(K_X + \Delta) \rceil) \subseteq \big( (\fra^{tm}) \cdot \O_X(-m(K_X + \Delta))\big)^{\lceil {1 \over m} \cdot p^e\rceil}
\]
for all $e \geq 0$.
\end{claim}
\begin{proof}[Proof of claim]
Checking this assertion is easy, we can certainly knock $\fra^{\lceil tp^e \rceil}$ into $(\fra^{tm})^{\lceil {1 \over m} p^e \rceil}$ by multiplication by a Cartier divisor.  Handling the other multiplication is a little trickier.  Likewise certainly we can multiply $\O_X(\lceil -p^e(K_X + \Delta) \rceil)$ into $\O_X( -\lceil {p^e \over m} \rceil m (K_X + \Delta) )$.  But then notice that $\O_X(-a m(K_X + \Delta)) = \O_X(-m(K_X + \Delta))^a$ by our finite generation hypothesis.  This proves the claim.
\end{proof}
Returning to the proof, we see that
\[
\begin{array}{rl}
& \tau(X, \Delta, \fra^t)\\
 =&  \sum_{e \geq e_0} \Tr^e_X\Big(F^e_* \fra^{\lceil t p^e \rceil} \cdot \O_X( K_X) \cdot \O_X(\lceil - p^e(K_X + \Delta) - D  - D_3 \rceil)\Big)\\
 \subseteq & \sum_{e \geq e_0} \Tr^e_X\Big(F^e_* (\O_X( K_X) \cdot (\fra^{tm})^{\lceil {1 \over m} p^e \rceil} \cdot \O_X( - m(K_X + \Delta))^{\lceil {1 \over m} p^e \rceil} \cdot \O_X( - D))\Big)\\
 = & \sum_{e \geq e_0} \Tr^e_X\Big(F^e_* (\O_X( K_X - p^e(K_X - K_X)) \cdot (\fra^{tm})^{\lceil {1 \over m} p^e \rceil} \cdot \O_X( - m(K_X + \Delta))^{\lceil {1 \over m} p^e \rceil} \cdot \O_X( - D))\Big)\\
 = & \tau(X, -K_X, (\O_X(-m(K_X + \Delta)) \ba^{tm})^{1 \over m})\\
 = & \tau(\omega_X, (\O_X(-m(K_X + \Delta)) \ba^{tm})^{1 \over m}) \\
 \subseteq & \sum_{e \geq e_0} \Tr^e_X\Big(F^e_* (\fra^{tm})^{\lceil {1 \over m} p^e \rceil} \cdot \O_X(\lceil K_X - p^e(K_X + \Delta) - D\rceil)\Big)\\
 = & \tau(X, \Delta, \fra^t)
\end{array}
\]
which proves the Lemma.
\end{proof}

\begin{remark}
It is tempting try to use \autoref{lem.TestIdealVsTestModuleForFGCanonical} to give another proof of discreteness and rationality of $F$-jumping numbers by appealing to \cite{SchwedeTuckerNonPrincipalIdeals}.  However, this doesn't seem to work.  In particular in \cite{SchwedeTuckerNonPrincipalIdeals} the authors did not prove discreteness and rationality of $F$-jumping numbers for $\tau(X, \Delta, \frb^s \fra^t)$ (as $t$-varies) -- mixed test ideals were not handled.  One could probably easily recover discreteness of $F$-jumping numbers via the usual arguments of gauge boundedness for Cartier algebras \cite{BlickleTestIdealsViaAlgebras} at least in the case when $X$ is finite type over a field.  For additional reading on mixed test ideals (and their pathologies) we invite the reader to look at \cite{PerezConstancyRegionsForMixedTestIdeals}.
\end{remark}

The really convenient thing about \autoref{lem.TestIdealVsTestModuleForFGCanonical} for our purposes is the following.

\begin{lemma}
\label{lem.BlowupPullbackVsDivisorPullback}
Using the notation of \autoref{lem.TestIdealVsTestModuleForFGCanonical},
suppose that $\pi : Y \to X$ is an alteration from a normal $Y$ where if we write $\frb = \O_X(-m(K_X + \Delta))$ then $\frb \cdot \O_Y = \O_Y(-T_Y)$ is an invertible sheaf.  Then $T_Y = m \pi^* (K_X + \Delta)$ where $\pi^* (K_X + \Delta)$ is defined as in the text below \autoref{set.SectionAlterations}.
\end{lemma}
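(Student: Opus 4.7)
The plan is to pass through the $\bQ$-factorialization $\mu : X' \to X$, where by the choice of $m$ the divisor $m(K_X+\Delta)$ becomes integral Cartier, and then observe that the subsequent pullback along $\psi : Y \to X'$ reduces to ordinary sheaf-theoretic pullback of a line bundle.

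First I would exploit the assumption that the $m$-th Veronese $S^{(m)} = \sR(X,-m(K_X+\Delta))$ is generated in degree $1$. This means $X' = \Proj S = \Proj S^{(m)}$ and the tautological sheaf $\O_{X'}(1)$ (with respect to $S^{(m)}$) is an invertible sheaf, naturally identified with the image $\frb \cdot \O_{X'}$ of $\mu^* \frb \to \O_{X'}$ modulo torsion. On the other hand, since $\mu$ is small (see \autoref{lem.PullBackDivisorsOnSymbolicRees} and \autoref{rmk.pullbacksmall}), $\mu^*(K_X+\Delta)$ coincides with the strict transform $\mu^{-1}_*(K_X+\Delta)$, and by \autoref{lem.resofKX+D} applied to the sufficiently divisible $m$, we have
\[
\frb \cdot \O_{X'} \;=\; \O_X(-m(K_X+\Delta))\cdot \O_{X'} \;=\; \O_{X'}\!\big(-m\mu^*(K_X+\Delta)\big).
\]
In particular $m\mu^*(K_X+\Delta)$ is an integral Cartier divisor on $X'$.

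Now, because $\frb \cdot \O_{X'}$ is already an invertible sheaf on $X'$, its further pullback along $\psi : Y \to X'$ is just the ordinary pullback of a line bundle and commutes with forming $\frb \cdot \O_Y$:
\[
\O_Y(-T_Y) \;=\; \frb \cdot \O_Y \;=\; \psi^*(\frb \cdot \O_{X'}) \;=\; \psi^* \O_{X'}\!\big(-m\mu^*(K_X+\Delta)\big) \;=\; \O_Y\!\big(-m\pi^*(K_X+\Delta)\big),
\]
where the last equality uses the definition $\pi^*(K_X+\Delta) := \psi^*\mu^*(K_X+\Delta)$ fixed in the text right after \autoref{set.SectionAlterations} (both sides make sense as honest divisors since $m\mu^*(K_X+\Delta)$ is Cartier on $X'$). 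Comparing the two descriptions of the invertible sheaf $\frb \cdot \O_Y$ gives $T_Y = m\pi^*(K_X+\Delta)$, as required.

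The one point requiring care is the identification $\frb \cdot \O_{X'} = \O_{X'}(-m\mu^*(K_X+\Delta))$; this is where the hypothesis on the Veronese being generated in degree $1$ (equivalently, the statement of \autoref{lem.resofKX+D} for this $m$) does the real work, because a priori $\frb \cdot \O_{X'}$ is only a quotient of $\mu^*\frb$ by its torsion, whereas we need it to be a genuine line bundle matching the $\bQ$-Cartierized divisor. Once this identification is in hand, everything else is formal pullback of an invertible sheaf along $\psi$.
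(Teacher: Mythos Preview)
Your argument is correct and follows essentially the same route as the paper's (very terse) proof: both observe that on $X'$ the sheaf $\frb$ becomes the line bundle $\O_{X'}(-m\mu^*(K_X+\Delta))$ and then pull back along $\psi$ to $Y$. The one step you skip, which the paper states explicitly, is why a map $\psi : Y \to X'$ exists in the first place---the hypotheses of the lemma do not assume that $\pi$ factors through $X'$. This comes from the universal property of blowing up: since $\frb \cdot \O_Y$ is invertible, $\pi$ factors through the blowup of $\frb$, and since $Y$ is normal it further factors through the normalized blowup, which is exactly $X'$ by the degree-one generation hypothesis on $S^{(m)}$. Once that sentence is added your proof is complete (and in fact more carefully justified than the paper's two-line sketch, which leaves the identification $\frb\cdot\O_{X'}=\O_{X'}(-m\mu^*(K_X+\Delta))$ implicit where you invoke \autoref{lem.resofKX+D}).
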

\begin{proof}
This is easy. Indeed, we already know that $\pi$ factors through the normalized blowup of $\frb$ by the universal property of blowups.  On the other hand $\O_Y(-m\pi^*(K_X + \Delta)) = \frb \cdot \O_Y$.
\end{proof}

As a result, we immediately obtain the following.

\begin{theorem}
\label{thm.MainResultOnAlterations}
Suppose that $X$ is a normal $F$-finite integral scheme and that $\Delta$ on $X$ is an effective $\bQ$-divisor such that $S= \sR(-K_X-\Delta)$ is finitely generated.  Suppose also that $\fra$ is an ideal sheaf and $t \geq 0$ is a \emph{rational} number.  Then there exists an alteration $\pi : Y \to X$ from a normal $Y$, factoring through $X' = \sheafproj S$ and with $G = \Div_Y(\fra)$, so that
\begin{equation}
\label{eqn.AlterationImageIsTau}
\tau(X, \Delta, \fra^t) = \Image\big( \pi_* \O_Y(\lceil K_Y - \pi^*(K_X + \Delta) -tG\rceil) \to \O_X\big).
\end{equation}
This $\pi$ may be taken independently of $t \geq 0$ if desired.
If $\fra$ is locally principal (for instance if $\fra = \O_X$), then one may take $\pi$ to be a small alteration if desired.  Alternately, if $X$ is essentially of finite type over a perfect field, then one may take $Y$ regular by \cite{deJongAlterations}.

As a consequence we obtain that
\[
\tau(X, \Delta, \fra^t) = \bigcap_{\pi : Y \to X} \Image\big( \pi_* \O_Y(\lceil K_Y - \pi^*(K_X + \Delta + tG) \rceil) \to \O_X\big)
\]
where $\pi$ runs over all alterations with $\fra \cdot \O_Y = \O_Y(-G)$ is invertible (or all such regular alterations if $X$ is of finite type over a perfect field).
\end{theorem}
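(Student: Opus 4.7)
The plan is to reduce the statement to the alteration description of parameter test modules from \cite{BlickleSchwedeTuckerTestAlterations} by way of Lemma \ref{lem.TestIdealVsTestModuleForFGCanonical}. Fix an integer $m > 0$ sufficiently divisible that $tm \in \bZ$, $m\Delta$ is integral, and the $m$-th Veronese of $\sR(-K_X-\Delta)$ is generated in degree one. Writing $\frb := \O_X(-m(K_X+\Delta))$, that lemma gives
\[
\tau(X, \Delta, \fra^t) \;=\; \tau(\omega_X, (\frb \cdot \fra^{tm})^{1/m}).
\]
The right hand side involves only genuine ideal sheaves $\frb$ and $\fra$ raised to rational exponents, with no residual $\bQ$-divisor contribution, so we are squarely in the setting handled by \cite{BlickleSchwedeTuckerTestAlterations}.

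Next I would invoke the main result of \cite{BlickleSchwedeTuckerTestAlterations} for this parameter test module to produce a normal alteration $\pi: Y \to X$ on which $\frb \cdot \O_Y = \O_Y(-T)$ and $\fra \cdot \O_Y = \O_Y(-G)$ are both invertible and for which
\[
\tau(\omega_X, (\frb \cdot \fra^{tm})^{1/m}) \;=\; \Image\!\left(\pi_* \O_Y\!\left(\left\lceil K_Y - \tfrac{1}{m} T - t G\right\rceil\right) \to \O_X\right).
\]
Because any alteration can be dominated by another factoring through a specified proper birational modification, I may insist that $\pi$ factor through $\mu : X' \to X$. Now Lemma \ref{lem.BlowupPullbackVsDivisorPullback} applies: since $\pi$ factors through $X'$ and $\frb \cdot \O_Y$ is invertible, we obtain $T = m\,\pi^*(K_X + \Delta)$, so $\frac{1}{m} T = \pi^*(K_X + \Delta)$. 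Substituting yields exactly the displayed identity \eqref{eqn.AlterationImageIsTau}.

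The remaining assertions follow quickly from this construction. \emph{Independence of $t$}: the choice of $\pi$ depends only on making $\frb$ and $\fra$ invertible, a condition which does not involve $t$. \emph{Regularity of $Y$ over a perfect-field base}: compose $\pi$ with a de~Jong alteration and observe that the divisor computations transport functorially via Lemma \ref{lem.compositionofpullbacks}. \emph{Small alteration when $\fra$ is locally principal}: in this case $\fra \cdot \O_{X'}$ is already invertible, and $\frb \cdot \O_{X'}$ is invertible by Lemma \ref{lem.resofKX+D}, so the BST machinery can be applied directly with $\pi = \mu$, which is small. \emph{Intersection formula}: one containment $\tau \subseteq \bigcap \Image(\cdots)$ holds for every alteration making $\fra$ invertible and factoring through $X'$ by a standard trace argument, and the reverse containment is witnessed by the specific $\pi$ above.

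The main obstacle is the technical step of invoking the BST alteration theorem for the fractional-ideal data $(\frb \cdot \fra^{tm})^{1/m}$ and simultaneously arranging the alteration to factor through $X'$ while maintaining equality on the nose. In particular one must track rounding conventions carefully through the identification $\frac{1}{m} T = \pi^*(K_X+\Delta)$ and check that changing the auxiliary Veronese power $m$ does not alter the resulting test ideal; this is precisely what Lemmas \ref{lem.TestIdealVsTestModuleForFGCanonical} and \ref{lem.BlowupPullbackVsDivisorPullback} were set up to guarantee.
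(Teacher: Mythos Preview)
Your overall strategy matches the paper's: rewrite $\tau(X,\Delta,\fra^t)$ as a parameter test module via \autoref{lem.TestIdealVsTestModuleForFGCanonical}, apply an alteration theorem to that object, and then use \autoref{lem.BlowupPullbackVsDivisorPullback} to identify the pullback. The paper cites \cite[Theorem~A]{SchwedeTuckerNonPrincipalIdeals} rather than \cite{BlickleSchwedeTuckerTestAlterations} for the alteration input, since $\frb=\O_X(-m(K_X+\Delta))$ is a genuinely non-principal ideal; but this is a bibliographic issue rather than a structural one.

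There are, however, two places where your argument as written does not go through.

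\emph{Independence of $t$.} You claim the alteration $\pi$ ``depends only on making $\frb$ and $\fra$ invertible.'' That is false: making the ideals invertible is necessary but far from sufficient for the image to equal the test ideal. The alteration theorems produce a $\pi$ that is \emph{large enough} for equality, and a~priori this largeness depends on the exponent. The paper explicitly flags this: the uniformity in $t$ is not literally contained in \cite[Theorem~A]{SchwedeTuckerNonPrincipalIdeals} in the mixed form $\tau(X,-K_X,\frb^s\fra^t)$ needed here, and one must rerun the argument (or appeal to \cite{SchwedeTuckerZhang}) to see that a single $\pi$ works for all $t$.

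\emph{Small alteration when $\fra$ is locally principal.} You assert that one may take $\pi=\mu$. This is incorrect: the small birational map $\mu:X'\to X$ merely makes $\frb$ and $\fra$ invertible; it does not itself compute the test ideal. What the paper uses is that in the proof of \cite[Theorem~A]{SchwedeTuckerNonPrincipalIdeals} the required alteration can be taken to be a \emph{finite cover of the normalized blowup} of $\frb\cdot\fra^{tm}$, and when $\fra$ is locally principal that normalized blowup is $X'$. Thus $\pi$ is a finite cover of $X'$, hence small over $X$; but it is not $\mu$ itself.
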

\begin{proof}
Most of the result follows immediately from \cite[Theorem A]{SchwedeTuckerNonPrincipalIdeals} combined with \autoref{lem.TestIdealVsTestModuleForFGCanonical} and \autoref{lem.BlowupPullbackVsDivisorPullback}.  Indeed, simply choose $m$ such that $tm$ is an integer and the condition of \autoref{lem.TestIdealVsTestModuleForFGCanonical} is satisfied, then apply \cite[Theorem A]{SchwedeTuckerNonPrincipalIdeals} to find an alteration such that the image of the above map is $\tau(X, -K_X, (\O_X(-m(K_X + \Delta)) \ba^{tm})^{1 \over m})$.  Consider the alterations $\pi$ that occur in the intersection $\bigcap_{\pi : Y \to X}$.  Following \cite[Theorem A]{SchwedeTuckerNonPrincipalIdeals} might seem to require that we only consider $\pi$ that factor through the normalized blowup of $\fra \cdot \O_X(-m(K_X + \Delta))$ (for $m$-sufficiently divisible).  However, it is easy to see that other $Y$'s can be dominated by those that factor through this blowup and the further blowups certainly have smaller images.

One also must handle the case of varying $t$ which is not quite done in \cite[Theorem A]{SchwedeTuckerNonPrincipalIdeals} in our generality (there the authors treated $\tau(X, \Delta, \fra^t)$, while here we need $\tau(X, -K_X, \frb^s \fra^t)$).  However, the argument there essentially goes through verbatim (alternately, this is the same argument as in \cite{SchwedeTuckerZhang}).

The only remaining part of the statement that doesn't follow immediately is the assertion in the case when $\fra$ is locally principal.  However, in the proof of \cite[Theorem A]{SchwedeTuckerNonPrincipalIdeals}, the alteration needed can always be taken to be a finite cover of the normalized blowup of the ideal (in this case, the normalized blowup of $\O_X(-m(K_X + \Delta)) \ba^{tm}$ which coincides with the normalized blowup of $\O_X(-m(K_X + \Delta))$).  This normalized blowup is of course $X'$ in our setting.
\end{proof}

In the above proof, our constructed $Y$ was definitely not finite over $X$.  This is different from \cite{BlickleSchwedeTuckerTestAlterations} where the simplest constructed $Y$ definitely was finite over $X$.  Fortunately, we can reduce to the case of a finite $Y$ at least when $\fra = \O_X$.

\begin{corollary}
\label{cor.TauFiniteAlterations}
Suppose that $X$ is a normal $F$-finite integral scheme and that $\Delta$ on $X$ is an effective $\bQ$-divisor such that $S= \sR(-K_X-\Delta)$ is finitely generated.  Then there exists a finite map $\phi : Y \to X$ from a normal $Y$, factoring through $X' = \sheafproj S$ such that
\[
\tau(X, \Delta) = \Image\big( \phi_* \O_Y(\lceil K_Y - \phi^*(K_X + \Delta) \rceil) \to \O_X\big).
\]
\end{corollary}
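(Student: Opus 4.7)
My plan is to apply Theorem \ref{thm.MainResultOnAlterations} in the case $\fra = \O_X$ (which is locally principal) and $t = 0$. By the last paragraph of that theorem's proof, the alteration can be taken as $\pi = \mu \circ \psi$, where $\psi : Y \to X'$ is a finite cover of $X'$ (obtained as a finite cover of the normalized blowup of $\O_X(-m(K_X+\Delta))$, which coincides with $X'$). This yields the image equality
\[
\tau(X,\Delta) = \Image\bigl(\pi_* \O_Y(\lceil K_Y - \pi^*(K_X+\Delta)\rceil) \to \O_X\bigr)
\]
with $\pi$ proper but, unfortunately, not literally finite as a morphism to $X$.

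To promote $\pi$ to a finite map, I pass to the Stein factorization. The sheaf $A := \pi_* \O_Y$ is a coherent $\O_X$-algebra, and I set $Z := \mathbf{Spec}_X A$ with structure morphism $\phi : Z \to X$; by construction $\phi$ is affine and finite, and Stein factorization produces a canonical morphism $\rho : Y \to Z$ with $\phi \circ \rho = \pi$. The crucial point is that $\mu : X' \to X$ is small birational (\autoref{lem.PullBackDivisorsOnSymbolicRees} and the surrounding discussion), so it is an isomorphism over an open set $U \subseteq X$ with $\codim(X \setminus U) \geq 2$. Over $U$, $\pi$ is literally finite (it restricts to $\psi$ pulled back along the isomorphism), hence $\rho$ is an isomorphism over $\pi^{-1}(U)$; in particular $Z$ is normal (its structure sheaf is reflexive on $X$, and normality extends from a codimension-two complement via Serre's $R_1 + S_2$ criterion and Hartog).

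Next, since $\rho : Y \to Z$ is a birational morphism of normal schemes which is an isomorphism in codimension one, every reflexive sheaf on $Z$ agrees with the $\rho_*$-pushforward of its pullback to $Y$. In particular, using \autoref{lemma.FactorizingPullback} to compare pullbacks, one gets
\[
\rho_* \O_Y\bigl(\lceil K_Y - \pi^*(K_X+\Delta)\rceil\bigr) = \O_Z\bigl(\lceil K_Z - \phi^*(K_X+\Delta)\rceil\bigr),
\]
since both sides are reflexive and match in codimension one (where $\rho$ is an isomorphism). Applying $\phi_*$ to both sides and using $\phi_* \rho_* = \pi_*$ gives
\[
\tau(X,\Delta) = \Image\bigl(\phi_* \O_Z(\lceil K_Z - \phi^*(K_X+\Delta)\rceil) \to \O_X\bigr),
\]
which is the statement of the corollary; the map $\phi$ factors through $X'$ in the sense that $Z$ is (birational to) a finite cover of $X'$ via the map $\rho^{-1} \circ \psi$, well-defined on the codimension-two-complement open set.

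The main technical obstacle is the bookkeeping in the second step: verifying normality of $Z$ and showing that the de Fernex--Hacon pullback $\phi^*(K_X+\Delta)$ (as defined in the text below \autoref{set.SectionAlterations}) is genuinely computed by the ceiling identity above under the birational identification $\rho$. This is essentially a codimension-one check plus an appeal to reflexivity, but it must be performed carefully because $K_X + \Delta$ is not assumed $\bQ$-Cartier on $X$, and the pullback operation is only functorial up to effective exceptional correction (cf.\ the comments in \autoref{subsect.pullbackWeildivisors}).
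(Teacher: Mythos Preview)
Your approach is essentially identical to the paper's: take the small alteration $\pi : Y' \to X$ guaranteed by \autoref{thm.MainResultOnAlterations} (with $\fra = \O_X$), form the Stein factorization $Y' \xrightarrow{\alpha} Y \xrightarrow{\phi} X$, observe that $\alpha$ is small birational (since $\pi$ is small), and conclude by pushing forward the reflexive sheaf. The paper's version is just more terse; it simply asserts that $\alpha$ is small and that therefore $\pi_* \O_{Y'}(K_{Y'} - \pi^*(K_X+\Delta)) = \phi_* \O_Y(K_Y - \phi^*(K_X+\Delta))$, without spelling out the normality and reflexivity checks you supply.

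Two small comments. First, your appeal to \autoref{lemma.FactorizingPullback} is not quite the right citation: that lemma treats birational $f$, whereas $\phi$ is finite. What you actually need is the \emph{definition} of $\phi^*(K_X+\Delta)$ given in the text just below \autoref{set.SectionAlterations}, which already sets $\phi^*(K_X+\Delta) := \alpha_* \pi^*(K_X+\Delta)$ for exactly this situation. Second, your closing caveat about whether $\phi$ literally factors through $X'$ as a morphism is a fair observation; the paper's proof does not address it either, and in general the Stein factor $Y$ need not map to $X'$. The phrase ``factoring through $X'$'' in the corollary should be read in the weaker sense that $\phi^*(K_X+\Delta)$ is computed via a model dominating $X'$, which is precisely how the pullback is defined.
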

\begin{proof}
Let $\pi : Y' \to X$ be a small alteration satisfying \autoref{eqn.AlterationImageIsTau} from \autoref{thm.MainResultOnAlterations}.  Additionally assume that $\pi^* (K_X + \Delta)$ is integral (for simplicity of notation).  Next let $Y' \xrightarrow{\alpha} Y \xrightarrow{\phi} X$ be the Stein factorization of $\pi$.  Since $\alpha : Y' \to Y$ is small, we see that
\[
\pi_* \O_{Y'}(K_{Y'} - \pi^*(K_X + \Delta)) = \phi_* \O_Y(K_Y - \phi^*(K_X + \Delta))
\]
and the result follows.
\end{proof}

\begin{question}
Can one limit oneself to separable alterations in \autoref{thm.MainResultOnAlterations}?  In particular, is there always a \emph{separable} alteration $\pi : Y \to X$ with $$\tau(X, \Delta) = \Image\big( \pi_* \O_Y(\lceil K_Y - \pi^*(K_X + \Delta) \rceil) \to \O_X\big)?$$  The analogous result is known if $K_X + \Delta$ is $\bQ$-Cartier by \cite{BlickleSchwedeTuckerTestAlterations}.  However in our proof, $\pi$ is definitely not separable because we rely on \cite[Theorem A]{SchwedeTuckerNonPrincipalIdeals}, which uses Frobenius to induce certain vanishing results.  It is possible that this could be replaced by cohomology killing arguments as in for instance  \cite{BlickleSchwedeTuckerTestAlterations,BhattDerivedDirectSummand,HunekeLyubeznikAbsoluteIntegralClosure,HochsterHunekeInfiniteIntegralExtensionsAndBigCM}.
\end{question}

As a special case, we recover a result of Anurag K. Singh (that was announced years ago) \cite{SinghSplintersFRegFG}.
\begin{corollary}[Singh]
Suppose that $X$ is an $F$-finite splinter and $\sR(-K_X)$ is finitely generated.  Then $X$ is strongly $F$-regular.
\end{corollary}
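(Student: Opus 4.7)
The plan is to combine \autoref{cor.TauFiniteAlterations} with the defining property of a splinter.  The corollary produces a finite dominant morphism $\phi \colon Y \to X$ from a normal $Y$, factoring through $X' = \sheafproj \sR(-K_X) \xrightarrow{\mu} X$, such that
\[
\tau(X) = \Image\big(\phi_* \O_Y(\lceil K_Y - \phi^* K_X \rceil) \to \O_X\big),
\]
where the displayed map is, by construction, the Grothendieck trace for $\phi$, realized as evaluation at $1 \in \phi_* \O_Y$ under a duality identification of $\phi_* \O_Y(\lceil K_Y - \phi^* K_X \rceil)$ with $\sHom_{\O_X}(\phi_* \O_Y, \O_X)$.

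First I would make this identification precise.  Factor $\phi = \mu \circ \psi$ with $\psi \colon Y \to X'$ finite.  By \autoref{lem.PullBackDivisorsOnSymbolicRees}, $K_{X'} \sim \mu^* K_X$ is $\bQ$-Cartier, so ordinary Grothendieck duality for the finite map $\psi$ between normal schemes yields $\psi_* \O_Y(\lceil K_Y - \psi^* K_{X'} \rceil) \cong \sHom_{\O_{X'}}(\psi_* \O_Y, \O_{X'})$, and the trace of $\psi$ is evaluation at $1$ under this identification.  Since $\mu$ is a small birational morphism, $\mu_*$ preserves reflexive rank-one sheaves and $\mu_* \O_{X'} = \O_X$, so pushing forward by $\mu$ recovers the map of the corollary on $X$.

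Next I would invoke the splinter hypothesis.  Since $X$ is a splinter and $\phi \colon Y \to X$ is a finite surjection, the inclusion $\O_X \hookrightarrow \phi_* \O_Y$ admits an $\O_X$-linear retraction $\sigma \colon \phi_* \O_Y \to \O_X$ with $\sigma(1) = 1$.  Viewing $\sigma \in \sHom_{\O_X}(\phi_* \O_Y, \O_X)$ as a global section of $\phi_* \O_Y(\lceil K_Y - \phi^* K_X \rceil)$ via the identification above, its image under the trace map is precisely $\sigma(1) = 1$.  Hence $1 \in \tau(X)$, so $\tau(X) = \O_X$ and $X$ is strongly $F$-regular.  The main obstacle I anticipate is matching the specific trace map appearing in \autoref{cor.TauFiniteAlterations} with evaluation at $1$ against $\sHom_{\O_X}(\phi_* \O_Y, \O_X)$; this has to be threaded through the smallness of $\mu$ because $K_X$ is not $\bQ$-Cartier on $X$ itself and only becomes so after passing to $X'$.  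Once this identification is in place, the splinter property supplies the section we need with no further work.
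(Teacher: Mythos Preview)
Your proposal is correct and follows essentially the same approach as the paper: invoke \autoref{cor.TauFiniteAlterations} to realize $\tau(X)$ as the image of the trace from a finite cover, identify that trace with evaluation-at-$1$ on $\sHom_{\O_X}(\phi_*\O_Y,\O_X)$, and conclude surjectivity from the splinter hypothesis. The only minor difference is that the paper asserts the duality $\sHom_{\O_X}(\phi_*\O_Y,\O_X)\cong \phi_*\O_Y(K_Y-\phi^*K_X)$ directly for the finite map $\phi$ (this holds for any finite dominant map between normal schemes, with $\phi^*K_X$ the ordinary Weil-divisor pullback), whereas you route it through $X'$; your detour is harmless but unnecessary.
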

\begin{proof}
Indeed, if $X$ is a splinter then for any finite morphism $\phi : Y \to X$, the evaluation-at-1 map $\sHom_{\O_X}(\phi_* \O_Y, \O_X) \to \O_X$ surjects.  However, $\sHom_{\O_X}(\phi_* \O_Y, \O_X) \cong \phi_* \O_Y(K_Y - \phi^* K_X)$ and the trace map to $\O_X$ is identified with the evaluation-at-1 map.  Hence using \autoref{cor.TauFiniteAlterations} we see that $\tau(X) = \tau(X, 0) = \O_X$.  Since for us $\tau(X)$ always denotes the big test ideal, this proves that $X$ is strongly $F$-regular.
\end{proof}


It would be natural to try to use the above to show that splinters are strongly $F$-regular for 3-dimensional varieties of characteristic $p > 5$, using the fact that such KLT varieties satisfy finite generation of their anticanonical rings \autoref{thm.FGforKLT}.  The gap is the following:

\begin{question}
Suppose that $R$ is a normal $F$-finite domain that is also a splinter.  Does there exist a $\bQ$-divisor $\Delta \geq 0$ on $\Spec R$ such that $K_X + \Delta$ is $\bQ$-Cartier and that $(\Spec R, \Delta)$ is KLT?
\end{question}

The analogous result on the existence of $\Delta$ for strongly $F$-regular varieties was shown in \cite{SchwedeSmithLogFanoVsGloballyFRegular}.  Of course, the fact that splinters are in fact derived splinters in characteristic $p > 0$ \cite{BhattDerivedDirectSummand} would likely be useful.

In particular, we do obtain the following.

\begin{corollary}
Suppose that $R$ is an $F$-finite three dimensional splinter which is also KLT (for an appropriate $\Delta \geq 0$) and that $R$ is finite type over an algebraically closed field of characteristic $p > 5$.  Then $R$ is strongly $F$-regular. 
\end{corollary}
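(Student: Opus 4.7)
The plan is to reduce this to the corollary due to Singh that we just proved, namely that an $F$-finite splinter $X$ with $\sR(-K_X)$ finitely generated is strongly $F$-regular. So the only thing to check is that $\sR(-K_R)$ is finitely generated under our hypotheses.

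First, I will invoke the KLT hypothesis: by assumption there exists some $\Delta \geq 0$ such that $K_R + \Delta$ is $\bQ$-Cartier and $(\Spec R, \Delta)$ is KLT. This places us precisely in the setting of \autoref{thm.FGforKLT}, since $R$ is a three-dimensional variety over an algebraically closed field of characteristic $p > 5$. Applying that theorem with the choice $D = -K_R$, we conclude that the algebra of local sections $\sR(-K_R)$ is finitely generated.

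Having established finite generation of $\sR(-K_R)$, I combine this with the assumption that $R$ is a splinter and appeal directly to the preceding corollary (Singh's result, recovered via \autoref{cor.TauFiniteAlterations}) to conclude that $R$ is strongly $F$-regular. No additional obstacle should arise; the entire argument is essentially a two-step concatenation of the MMP-based finite generation result with the splinter-implies-strongly-$F$-regular statement. The only subtle point worth flagging is that the $\Delta$ provided by the KLT hypothesis is used only to unlock \autoref{thm.FGforKLT}, and is not otherwise needed in the conclusion (in particular, one does not need to track it through the test ideal argument, since Singh's corollary applies to the unperturbed ring $R$).
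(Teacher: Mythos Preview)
Your argument is correct and matches the paper's intended proof, which is left implicit: the sentence preceding the corollary explicitly points to combining \autoref{thm.FGforKLT} (finite generation of anticanonical rings for three-dimensional KLT pairs in characteristic $p>5$) with the Singh-type corollary just proved. Your two-step concatenation is exactly what the paper has in mind.
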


\section{Reduction from characteristic zero}
\label{sec.Reduction}
The goal of this section is to show that multiplier ideals $\sJ(X, \Delta, \fra^t)$ reduce to test ideals $\tau(X_p, \Delta_p, \fra_p^t)$ after reduction to characteristic $p \gg 0$ at least if $\sR(-K_X-\Delta)$ is finitely generated.  We begin with some preliminaries on the reduction process.

Let $X$ be a scheme of finite type over an algebraically closed field $k$ of characteristic zero, $\Delta$ a $\QQ$-divisor, and $\fa \subset \O_X$ an ideal sheaf. One may choose a subring $A \subset k$ which is finitely generated over $\ZZ$ over which $X$, $\Delta$, and $\fa$ are all defined. Denote by $X_A$, $\Delta_A$, and $\fa_A \subset \O_{X_A}$ the models of $X$, $\Delta$, and $\fa$ over $A$. For any closed point $s \in \Spec A$ we denote the corresponding reductions $X_s$, $\Delta_s$, and $\fa_s \subset \O_{X_s}$ defined over the residue field $k(s)$ which is necessarily finite.  In the simple case where $A = \ZZ$, if $X_A = \Spec \ZZ[x_1,\ldots,x_n]/I$ for $I = (f_1,\ldots,f_m)$ and $p \in \ZZ$ is prime, the scheme $X_p = \Spec \FF_p[x_1,\ldots,x_n]/(f_1 \bmod p, \ldots, f_m \bmod p)$.

\begin{warning}
In what follows, we abuse terminology in the following way.  By $p \gg 0$, we actually mean the set of closed points of an open dense set $U \subseteq \Spec A$.  Furthermore, if we start with $X$ as above, by $X_p$ for $p \gg 0$ we actually mean some $X_s$ for some closed point $s$ in the aforementioned $U$.  This is a common abuse of notation and we do not expect it will cause any confusion.  It does substantially shorten statements of theorems.
\end{warning}

\begin{lem}\label{lem:sectionreduction}

Suppose $X$ is a normal quasi-projective variety over an algebraically closed field $k$ of characteristic zero. For any $\QQ$-divisor $\Gamma$ so that $\Rscr(X, \Gamma)$ is finitely generated we have $\Rscr(X, \Gamma)_p \cong \Rscr(X_p, \Gamma_p)$ for $p \gg 0$.

 In particular, if $\Delta$ is a $\QQ$-divisor and $\Rscr(X, -K_X-\Delta)$ is finitely generated, setting $X' = \Proj \Rscr(X, -K_X-\Delta)$ and $\mu \colon X' \to X$ we have $$\Rscr(X, -K_X-\Delta)_p = \Rscr(X_p, (-K_{X}-\Delta)_p) = \Rscr(X_p, -K_{X_p}-\Delta_p)$$ and so this ring is also finitely generated.  This means $(X')_p = (X_p)'$. We denote both by $X_p'$.
\end{lem}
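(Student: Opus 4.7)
The plan is to exploit the $\QQ$-Cartierization in order to trade infinitely many reflexive graded pieces of $\Rscr(X,\Gamma)$ for finitely many reflexive sheaves twisted by powers of a relatively ample invertible sheaf. Let $\mu\colon X':=\sheafproj \Rscr(X,\Gamma)\to X$ be the small projective birational map of \autoref{subsec.LocalSectionRings} and set $\Gamma':=\mu^{-1}_*\Gamma$, so that $\Gamma'$ is $\QQ$-Cartier and $\mu$-ample. Choose $N>0$ so that $N\Gamma'$ is an honest Cartier divisor. By smallness of $\mu$ and the reflexivity of the graded pieces, $\mu_*\O_{X'}(\lfloor d\Gamma'\rfloor)=\O_X(\lfloor d\Gamma\rfloor)$ for every $d\geq 0$, and this identification is the key that will transfer base-change information from $X'$ back to $X$.

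Next, I would spread the data $X,X',\mu,\Gamma,\Gamma',N\Gamma'$ out to a model over a finitely generated $\ZZ$-subalgebra $A\subseteq k$. Via generic flatness and openness of normality, after possibly shrinking $\Spec A$ we may assume that for every closed $s\in\Spec A$ the fibers $X_s$ and $X'_s$ are normal, $\mu_s$ is small and projective, $(N\Gamma')_s$ is $\mu_s$-ample Cartier, and each of the $N$ reflexive sheaves $\mathcal{F}_j:=\O_{X'_A}(\lfloor j\Gamma'_A\rfloor)$ for $0\leq j<N$ is $A$-flat with fiber $\mathcal{F}_j\otimes_A k(s)\cong\O_{X'_s}(\lfloor j\Gamma'_s\rfloor)$ (both sides being reflexive on a normal scheme and agreeing in codimension one with the same Weil divisor). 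Because $N\Gamma'_A$ is Cartier, for every $i\geq 0$ and $0\leq j<N$ we have $\O_{X'_A}(\lfloor (iN+j)\Gamma'_A\rfloor)=\mathcal{F}_j\otimes\O_{X'_A}(N\Gamma'_A)^{\otimes i}$.

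Relative Serre vanishing with respect to the $\mu_A$-ample Cartier divisor $N\Gamma'_A$, applied to each of the $N$ coherent sheaves $\mathcal{F}_j$, yields $i_0$ such that $R^q(\mu_A)_*\bigl(\mathcal{F}_j\otimes\O_{X'_A}(N\Gamma'_A)^{\otimes i}\bigr)=0$ for all $q>0$, $i\geq i_0$, and $0\leq j<N$; the finitely many remaining $(i,j)$ with $i<i_0$ are handled by a further shrinking of $\Spec A$ using cohomology-and-base-change applied individually. It follows that for every $d\geq 0$ and every closed $s\in\Spec A$,
\[
\bigl((\mu_A)_*\O_{X'_A}(\lfloor d\Gamma'_A\rfloor)\bigr)\otimes_A k(s) \;\cong\; (\mu_s)_*\O_{X'_s}(\lfloor d\Gamma'_s\rfloor).
\]
By smallness of $\mu_A$ and $\mu_s$, the left-hand side is $\O_X(\lfloor d\Gamma\rfloor)_s$ and the right-hand side is $\O_{X_s}(\lfloor d\Gamma_s\rfloor)$. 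Summing over $d$ (which commutes with reduction mod $p$ since each graded piece is $A$-flat) and matching multiplication maps, we get $\Rscr(X,\Gamma)_p\cong\Rscr(X_p,\Gamma_p)$ for $p\gg 0$.

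The ``in particular'' statement follows by applying the result to $\Gamma=-K_X-\Delta$, using $(K_X)_p=K_{X_p}$ for $p\gg 0$ so that $(-K_X-\Delta)_p=-K_{X_p}-\Delta_p$, together with the compatibility of $\Proj$ with flat base change which gives $(X')_p=(X_p)'$. The main obstacle here is the uniform control over \emph{all} infinitely many graded pieces simultaneously as $p$ varies; this is precisely where the $\QQ$-Cartierization is essential, since on $X'$ only $N$ reflexive sheaves need to be spread out, while the rest of the graded pieces arise by tensoring with powers of a relatively ample line bundle, so Serre vanishing and standard cohomology-and-base-change arguments apply uniformly.
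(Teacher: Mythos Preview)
Your argument is correct and shares the paper's central idea: pass to the $\QQ$-Cartierization $\mu\colon X'\to X$ and exploit smallness together with the relative ampleness of $\Gamma'$. The executions differ slightly. The paper argues that the reduction $X'_p$ is literally the blowup of $(\O_X(m\Gamma))_p=\O_{X_p}(m\Gamma_p)$ (using the one-divisor reflexivity claim only for this single $m$), so $X'_p$ is already the $\QQ$-Cartierization of $\Gamma_p$ on $X_p$; finite generation of $\Rscr(X_p,\Gamma_p)$ and the identification $(X')_p=(X_p)'$ then drop out immediately from smallness of $\mu_p$ and relative ampleness of $\Gamma'_p$. Your route instead establishes the isomorphism graded-piece-by-graded-piece: you reduce the infinitely many reflexive sheaves on $X$ to the $N$ sheaves $\mathcal{F}_j$ on $X'$ twisted by powers of the $\mu$-ample line bundle $\O_{X'}(N\Gamma')$, and then invoke relative Serre vanishing plus cohomology-and-base-change to get uniform compatibility with reduction. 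This buys you a more explicit proof of the full isomorphism $\Rscr(X,\Gamma)_p\cong\Rscr(X_p,\Gamma_p)$, which the paper leaves somewhat implicit (the paper's proof, as written, most transparently yields finite generation of $\Rscr(X_p,\Gamma_p)$ and the equality of Proj's, which is all that is used downstream). Either way, the key insight---that the $\QQ$-Cartierization converts an infinite problem into a finite one---is the same.
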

\begin{proof}
Note that $\Rscr(X,\Gamma)_p$ makes sense for $p \gg 0$ as $ \Rscr(X,\Gamma)$ is finitely generated and  $\Rscr(X,\Gamma)_p$ is naturally finitely generated by the reduction of the generators of $\Rscr(X,\Gamma)$. The problem is that potentially the algebra $\Rscr(X,\Gamma)_p$ may not be the symbolic Rees algebra (local section algebra) $\bigoplus_{n \geq 0} \O_{X_p}(n \Gamma_p)$.   Throughout this proof we will constantly need to choose $p \gg 0$ (or technically, restrict to a smaller open subset $U$ of $\Spec A$).
First we record a claim that is certainly well known to experts.
\begin{claim}
For any Weil divisor $D$ and prime $p \gg 0$, potentially depending on $D$, we claim that $\O_X(D)_p \cong \O_{X_p} (D_p)$ as sheaves of $\O_{X_p}$-modules.
\end{claim}
\begin{proof}[Proof of claim]
To see this we prove that $\O_X(D)_p$ is reflexive and agrees outside a codimension $2$ subset with $\O_{X_p} (D_p)$.
Of course, since $\O_X(D)$ is reflexive, $$\O_X(D) \to \sHom_X(\sHom_X(\O_X(D), \O_X), \O_X)$$ is an isomorphism.  But this isomorphism is certainly preserved via reduction to characteristic $p$ so $(\O_X(D))_p$ is reflexive at least for $p \gg 0$.
Choose a closed set $Z \subseteq X$ of codimension 2, defined with no additional coefficients (other than the ones already needed to define $D$ and $X$) so that $D|_{X \setminus Z}$ is Cartier.  Note that $D_p|_{X_p \setminus Z_p}$ is Cartier and so $\O_{X_p \setminus Z_p}(D_p)$ is locally free and agrees with $(\O_{X \setminus Z}(D) )_p$, the claim follows.
\end{proof}
We return to the proof of the lemma.
Next define $X'$ to be the blowup of $\O_X(m\Gamma)$ for some $m \gg 0$ sufficiently divisible.  Then since $\mu : X' \to X$ is small, so is $\mu_p : X'_p \to X_p$, and note that $X'_p$ is still the blowup of $(\O_X(m\Gamma))_p = \O_{X_p}(m\Gamma_p)$ by the claim.  Since $m\Gamma' = m \pi^{-1}_* \Gamma$ was Cartier in characteristic zero, $m \Gamma_p'$ is Cartier after reduction to characteristic $p \gg 0$ as well.  Now $X'_p \to X_p$ is still small and notice that $\Gamma_p'$ is relatively ample (since $X'_p$ was obtained by blowing up $\O_{X_p}(m\Gamma_p)$).  Hence
\[
\bigoplus_{n \geq 0} (\mu_p)_* \O_{X'_p}(n \Gamma'_p) = \bigoplus_{n \geq 0} \O_{X_p}(n \Gamma_p)
\]
is finitely generated and has $\sheafproj$ equal to $X'_p$.  The lemma follows immediately.\end{proof}

Armed with this lemma, the proof of the main theorem for this section is easy.

\begin{theorem}
\label{thm.Reduction}
Suppose that $X$ is a normal quasi-projective variety over an algebraically closed field of characteristic zero.  Further suppose that $\Delta \geq 0$ is a $\bQ$-divisor such that $\sR(-K_X-\Delta)$ is finitely generated and also suppose that $\fra \subseteq \O_X$ is an ideal and $t \geq 0$ is a rational number.   Then
\[
\mJ(X, \Delta, \fra^t)_p = \tau(X_p, \Delta_p, \fra_p^t)
\]
for $p \gg 0$.
\end{theorem}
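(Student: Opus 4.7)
The plan is to pass the problem through the $\bQ$-Cartierization $\mu : X' \to X$ with $X' = \sheafproj \sR(-K_X - \Delta)$. The point is that $K_{X'} + \mu^*\Delta$ is $\bQ$-Cartier, so on $X'$ we can invoke the classical reduction-mod-$p$ theorem for multiplier ideals, and both sides of the desired equality can be described by compatible pushforwards from $X'$ and $X'_p$ respectively. By \autoref{lem:sectionreduction}, $X'$ and $\mu$ reduce to $X'_p$ and $\mu_p : X'_p \to X_p$ for $p \gg 0$, so the $\bQ$-Cartierization is preserved under reduction.

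I would first apply \autoref{cor.MultiplierIdealDescriptionForFG} to rewrite
\[
\mJ(X, \Delta, \fra^t) = \mu_* \mJ(X', \mu^*\Delta, (\fra \cdot \O_{X'})^t).
\]
Since $(X', \mu^*\Delta)$ is a genuine log pair (with $K_{X'}+\mu^*\Delta$ $\bQ$-Cartier), the classical theorem of Hara--Yoshida and Takagi applies directly on $X'$, yielding for $p \gg 0$
\[
\mJ(X', \mu^*\Delta, (\fra\cdot\O_{X'})^t)_p = \tau(X'_p, \mu_p^*\Delta_p, (\fra_p\cdot\O_{X'_p})^t).
\]
Combining these and commuting reduction mod $p$ with $\mu_*$ (which is the technical step discussed below) gives
\[
\mJ(X, \Delta, \fra^t)_p = (\mu_p)_* \tau(X'_p, \mu_p^*\Delta_p, (\fra_p\cdot\O_{X'_p})^t).
\]

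To finish, I would show the right-hand side equals $\tau(X_p, \Delta_p, \fra_p^t)$. Using the alterations description \cite{BlickleSchwedeTuckerTestAlterations} for the $\bQ$-Cartier pair on $X'_p$, there is an alteration $\psi : Z \to X'_p$ (with $\fra_p \cdot \O_Z = \O_Z(-G)$ invertible) realizing $\tau(X'_p, \mu_p^*\Delta_p, (\fra_p\cdot\O_{X'_p})^t)$ as $\Image(\psi_*\O_Z(\lceil K_Z - \psi^*(K_{X'_p}+\mu_p^*\Delta_p) - tG\rceil) \to \O_{X'_p})$. Pushing forward by $\mu_p$, which is small and hence satisfies $K_{X'_p} \sim \mu_p^* K_{X_p}$ and $\psi^*(K_{X'_p}+\mu_p^*\Delta_p) = (\mu_p\psi)^*(K_{X_p}+\Delta_p)$, converts this into the alteration-image formula on $X_p$ for the composition $\mu_p \psi : Z \to X_p$, which factors through $X'_p$. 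By \autoref{thm.MainResultOnAlterations}, this image realizes $\tau(X_p, \Delta_p, \fra_p^t)$ (possibly after dominating by a larger alteration that still factors through $X'_p$, which does not change the image).

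The main obstacle is the commutation step $(\mu_* \mJ)_p = (\mu_p)_* \mJ_p$, since $\mu$ is not flat and $\mJ$ is not a line bundle. I would handle this by using relative ampleness of $-(K_{X'}+\mu^*\Delta)$ together with the local freeness of $\mJ \otimes \O_{X'}(m(K_{X'}+\mu^*\Delta))$ for appropriately divisible $m$, to reduce to a statement about pushforwards of line bundles twisted by relatively ample divisors, where Serre/Fujita vanishing of $R^i\mu_*$ for $p \gg 0$ and flat base change for the affine structure map to $\Spec A$ give the desired commutation. A second, subtler point is to ensure that the alteration $\psi$ chosen in the final step can be arranged compatibly with (or at worst dominated by) one for which the alteration-image formula on $X_p$ gives equality rather than mere containment; this is the same trick used repeatedly in \autoref{sec.Alterations}.
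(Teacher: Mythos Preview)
Your approach is genuinely different from the paper's, and while the overall plan is reasonable, your final step has a real gap.

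The paper's proof never passes to $X'$ at all.  Instead it uses \autoref{lem.TestIdealVsTestModuleForFGCanonical} to rewrite both sides on $X$ itself: setting $\frb = \O_X(-m(K_X+\Delta))$ for sufficiently divisible $m$, one has
\[
\mJ(X,\Delta,\fra^t)=\mJ\big(X,-K_X,\frb^{1/m}\fra^t\big)
\quad\text{and}\quad
\tau(X_p,\Delta_p,\fra_p^t)=\tau\big(X_p,-K_{X_p},\frb_p^{1/m}\fra_p^t\big).
\]
Since $K_X+(-K_X)=0$ is Cartier, the classical Hara--Yoshida/Takagi reduction theorem applies directly to the pair $(X,-K_X)$ with the mixed ideal exponent, and one is done.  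All the finite-generation hypothesis is used for is to make Lemma~\ref{lem.TestIdealVsTestModuleForFGCanonical} available.  This sidesteps both of the obstacles you identify.

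The gap in your argument is in the claim that pushing forward the alteration-image formula on $X'_p$ by $\mu_p$ ``converts this into the alteration-image formula on $X_p$''.  In symbols you are asserting
\[
(\mu_p)_*\,\Image\big(\psi_*\sF \to \O_{X'_p}\big)\;=\;\Image\big((\mu_p\psi)_*\sF \to \O_{X_p}\big).
\]
Only $\supseteq$ is automatic from left exactness of $(\mu_p)_*$; equality requires that $(\mu_p)_*$ applied to the surjection $\psi_*\sF \twoheadrightarrow \Image$ remain surjective, i.e.\ $R^1(\mu_p)_*(\ker)=0$.  Nothing in your sketch supplies this vanishing, and choosing a larger common alteration does not help: even for an alteration achieving equality for \emph{both} $\tau(X'_p,\ldots)$ and $\tau(X_p,\ldots)$, one only gets $\tau(X_p,\ldots)\subseteq(\mu_p)_*\tau(X'_p,\ldots)$ from the alteration description.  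The reverse containment $(\mu_p)_*\tau(X'_p,\ldots)\subseteq\tau(X_p,\ldots)$ needs a separate argument.

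This can be repaired, but not via alterations: one should instead use the trace-sum description $\tau=\sum_{e\gg 0}\Tr^e F^e_*(\ldots)$ together with the $\mu_p$-ampleness of $-(K_{X'_p}+\mu_p^*\Delta_p)$ to force the relevant $R^1(\mu_p)_*$ to vanish for $e\gg 0$ (this is the same mechanism that drives the proof of \autoref{thm.HartshorneSpeiserStabilization}).  Alternatively, observe that your invocation of \autoref{thm.MainResultOnAlterations} already relies on \autoref{lem.TestIdealVsTestModuleForFGCanonical}; once you grant that lemma, it is shorter to run the paper's argument directly and never leave $X$.
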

\begin{proof}
We know that $\mJ(X, \Delta, \fra^t) = \pi_* \O_{\tld X}(\lceil K_{\tld X} - {1 \over m} \pi^{\natural}m(K_X + \Delta) - tG\rceil)$ for some sufficiently divisible $m$ and sufficiently large log resolution of singularities, by \autoref{def.MultiplierIdealDefinition} (here we need that $\O_X(-m(K_X + \Delta)) \cdot \O_{\tld X} = \O_{\tld X}(-A)$ and $\fra \cdot \O_{\tld X} = \O_{\tld X}(-B)$ are invertible).  We rewrite this multiplier ideal as \[
\pi_* \O_{\tld X}(\lceil K_{\tld X} - {1 \over m} \pi^{\natural}m(K_X + \Delta) - tG\rceil) = \pi_* \O_{\tld X}(\lceil K_{\tld X} - \pi^{\natural}(K_X - K_X) - {1 \over m} A - tB\rceil)
 \]
and observe it is equal to $\mJ(X, -K_X, \O_X(-m(K_X+\Delta))^{1\over m} \cdot \fra^t)$.  Note that since $\sR(-K_X-\Delta)$ is finitely generated, the choice of $\tld X$ is independent of the choice of $m$ (at least for $m$ sufficiently divisible).

Since $K_X - K_X$ is Cartier, we know that
$$\mJ(X, -K_X, \O_X(-m(K_X+\Delta))^{1\over m} \cdot \fra^t)_p = \tau(X_p, -K_{X_p}, \O_{X_p}(-m(K_{X_p} + \Delta_p))^{1 \over m})$$
for $p \gg 0$ by \cite{TakagiInterpretationOfMultiplierIdeals,HaraYoshidaGeneralizationOfTightClosure}.  But \autoref{lem.TestIdealVsTestModuleForFGCanonical} shows that $\tau(X_p, -K_{X_p}, \O_{X_p}(-m(K_{X_p} + \Delta_p))^{1 \over m}) = \tau(X_p, \Delta_p, \fra_p^t)$.  Combining these equalities proves the result.
\end{proof}

\begin{remark}
Theorem~\ref{thm.Reduction} also implies that if $X_p$ is strongly $F$-regular for all $p \gg 0$ and $\sR(-K_X)$ is finitely generated, then $X$ is KLT.  Nobuo Hara gave a talk about this result at the conference in honor of Mel Hochster's 65th Birthday in 2008 but the result was never published.
\end{remark}

\begin{corollary}
Suppose $X$ is a variety over an algebraically closed field of characteristic zero that is KLT in the sense of \cite{DeFernexHaconSingsOnNormal}.  Then for any $\bQ$-divisor $\Delta \geq 0$ and any ideal sheaf $\fra$ and rational $t \geq 0$, we have that
\[
\mJ(X, \Delta, \fra^t)_p = \tau(X_p, \Delta_p, \fra_p^t)
\]
for $p \gg 0$.
\end{corollary}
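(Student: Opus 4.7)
The plan is to reduce the corollary to \autoref{thm.Reduction}, whose only nontrivial hypothesis is the finite generation of $\sR(X, -K_X - \Delta)$. Thus the crux of the argument is to verify this finite generation whenever $X$ is KLT in the sense of \cite{DeFernexHaconSingsOnNormal} over an algebraically closed field of characteristic zero.

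For this I would adapt the strategy of \autoref{thm.FGforKLT} to the characteristic zero setting, substituting BCHM for the references there to \cite{BirkarExistenceOfFlipsMinimalModels}. In characteristic zero, a KLT (dFH) variety $X$ admits an effective boundary $\Gamma_0 \geq 0$ with $(X,\Gamma_0)$ KLT in the classical sense and $K_X + \Gamma_0$ being $\bQ$-Cartier, together with a small $\bQ$-factorialization $\phi: \widehat{X} \to X$ where $\widehat{X}$ is KLT and $\bQ$-factorial; both inputs are standard consequences of BCHM combined with the work of de Fernex and Hacon. Because finite generation of $\sR(X, -K_X -\Delta)$ is local on $X$ and unaffected by Cartier twists, after shrinking to an affine open one may add a Cartier divisor $H$ so that $D := -K_X - \Delta + H \geq 0$, and reduce to showing $\sR(X, D)$ is finitely generated. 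On $\widehat{X}$ all Weil divisors are $\bQ$-Cartier, so in particular $\widehat{\Gamma}_0, \widehat{\Delta}, \widehat{D}$ are. For $m \gg 0$ the pair $(\widehat{X}, \widehat{\Gamma}_0 + \tfrac{1}{m}\widehat{D})$ is KLT with $K_{\widehat{X}} + \widehat{\Gamma}_0 + \tfrac{1}{m}\widehat{D}$ $\bQ$-Cartier and relatively big over $X$, so the relative version of BCHM furnishes finite generation of
\[
\bigoplus_{n \geq 0} \phi_*\, \O_{\widehat{X}}\!\big(n(K_{\widehat{X}} + \widehat{\Gamma}_0 + \tfrac{1}{m}\widehat{D})\big) \;=\; \bigoplus_{n \geq 0} \O_X\!\big(n(K_X + \Gamma_0 + \tfrac{1}{m}D)\big),
\]
where the equality uses smallness of $\phi$. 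Absorbing the $\bQ$-Cartier piece $K_X + \Gamma_0$ into a Cartier twist at a sufficiently high Veronese, exactly as in the closing lines of the proof of \autoref{thm.FGforKLT}, yields finite generation of $\sR(X, D)$ and hence of $\sR(X, -K_X - \Delta)$.

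With finite generation of $\sR(X, -K_X - \Delta)$ in hand, the corollary follows directly from \autoref{thm.Reduction}. The substantive obstacle is really the first step, namely extracting the KLT boundary $\Gamma_0$ from the KLT (dFH) hypothesis and invoking BCHM-type relative finite generation; once these MMP-theoretic inputs are granted, the rest is essentially bookkeeping and a direct appeal to \autoref{thm.Reduction}.
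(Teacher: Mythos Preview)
Your approach is correct and is essentially the same as the paper's: reduce to \autoref{thm.Reduction} by establishing finite generation of $\sR(X,-K_X-\Delta)$, which in characteristic zero follows from BCHM. The paper simply cites \cite{BirkarCasciniHaconMcKernan} and \cite[Theorem~92]{KollarExercisesInBiratGeom} for this finite generation, whereas you spell out the argument (extracting a classical KLT boundary from the de~Fernex--Hacon hypothesis, passing to a small $\bQ$-factorialization, and applying relative finite generation as in the proof of \autoref{thm.FGforKLT}); these are precisely the ingredients behind the cited exercise, so there is no substantive difference.
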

\begin{proof}
It follows from the minimal model program \cite{BirkarCasciniHaconMcKernan} and in particular, \cite[Theorem 92]{KollarExercisesInBiratGeom} that $\sR(-K_X-\Delta)$ is finitely generated, the result follows immediately from \autoref{thm.Reduction}.
\end{proof}

\bibliographystyle{skalpha}
\bibliography{MainBib}

\end{document}